\newtheorem{lem}{\noindent {\bf Lemma}}[section]
\newtheorem{prop}{\noindent {\bf Proposition}}[section]
\newtheorem{coro}{\noindent {\bf Corollary}}[section]
\newtheorem{thm}{\noindent {\bf Theorem}}[section]
\newcounter{remark}
\newenvironment{remark}{\smallskip\noindent {\bf Remark \arabic{section}.\arabic{remark}.}}
{\addtocounter{remark}{1}\par}
\newcounter{example}
\newenvironment{exa}{\smallskip\noindent {\bf Example \arabic{section}.\arabic{example}.}}
{\addtocounter{example}{1}\par}
\date{}
\newcounter{defi}\setcounter{defi}{1}
\newenvironment{defi}{
\smallskip \noindent
{\bf
  Definition \arabic{section}.\arabic{defi}.
}}{\addtocounter{defi}{1}\par}
\newcommand{\ZZ}{\mathbb Z}
\newcommand{\NN}{\mathbb N}
\newcommand{\RR}{\mathbb R}
\title{\bf Classification of metric spaces with infinite asymptotic dimension}
\author{\large  Yan Wu$^\ast$\qquad Jingming Zhu$^{\ast\ast}$
\footnote{
College of Mathematics Physics and Information Engineering, Jiaxing University, Jiaxing , 314001, P.R.China.
$^\ast$ E-mail: yanwu@mail.zjxu.edu.cn $^\ast\ast$ E-mail: 122411741@qq.com }}
\date{}
\begin{document}
\maketitle
\begin{center}
\begin{minipage}{0.9\textwidth}
\noindent{\bf Abstract.}
We introduce a geometric property complementary-finite asymptotic dimension (coasdim). Similar with asymptotic dimension, we prove the corresponding coarse invariant theorem, union theorem and Hurewicz-type theorem. Moreover, we show that coasdim$(X)\leq fin+k$ implies trasdim$(X)\leq \omega+k-1$ and transfinite asymptotic dimension of the shift union sh$\bigcup\prod_{i=1}^{\infty}i\mathbb{Z}$ is no more than $\omega+1$. i.e., trasdim(sh$\bigcup\prod_{i=1}^{\infty}i\mathbb{Z})\leq\omega+1$.

{\bf Keywords } Asymptotic dimension, Transfinite asymptotic dimension, Asymptotic property C;

\end{minipage}
\end{center}
\footnote{
This research was supported by
the National Natural Science Foundation of China under Grant (No.11301224,11326104)

}
\begin{section}{Introduction}\

In coarse geometry, asymptotic dimension of a metric space is an important concept which was defined by Gromov for studying asymptotic invariants of discrete groups \cite{Gromov}. This dimension can be considered as an asymptotic analogue of the Lebesgue covering dimension. As a large scale analogue of W.E. Haver¡¯s property C in dimension theory, A. Dranishnikov introduced the notion of asymptotic property C in \cite{Mari2014}. It is well known that every metric space with finite asymptotic dimension has asymptotic property C \cite{Dra00}. But the inverse is not true, which means that there exists some metric space $X$ with
infinite asymptotic dimension and asymptotic property C. Therefore how to classify the metric spaces with infinite asymptotic dimension into smaller categories becomes an interesting problem.  T. Radul defined trasfinite asymptotic dimension (trasdim) which can be viewed as transfinite extension for asymptotic dimension and gave examples of metric spaces with trasdim$=\omega$ and with trasdim$=\infty$ (see \cite{Radul2010}). He also proved trasdim$(X)<\infty$ if and only if $X$ has asymptotic property C for every metric space $X$. But whether there is a metric space $X$ with $\omega<trasdim(X)<\infty$ is still unknown so far.

In this paper, we introduce another approach to classify the metric spaces with infinite asymptotic dimension, which is called complementary-finite asymptotic dimension (coasdim), and give some examples of metric spaces with different complementary-finite asymptotic dimensions. Moreover, we prove some properties of complementary-finite asymptotic dimension and show that coasdim$(X)\leq fin+k$ implies trasdim$(X)\leq \omega+k-1$ for every metric space $X$.

The paper is organized as follows: In Section 2, we recall some definitions and properties of transfinite asymptotic dimension. In
Section 3, we introduce complementary-finite asymptotic dimension and give some examples of metric spaces with different complementary-finite asymptotic dimensions.  Besides, we prove some properties of complementary-finite asymptotic dimension like coarse invariant theorem, Union Theorem and Hurewicz-type theorem. Finally, we investigate the relationship between complementary-finite asymptotic dimension and transfinite asymptotic dimension. In Section 4, we give an example of a metric space $X$ with coasdim$(X)\leq fin+1$ and  coasdim$(X\times X)< fin+fin$ is not true, which shows that complementary-finite asymptotic dimension is not stable under direct product. However, trasdim$(X)=$trasdim$(X\times X)=\omega$. In Section 5, we define the shift union of $\prod_{i=1}^{\infty}i\mathbb{Z}$ by sh$\bigcup\prod_{i=1}^{\infty}i\mathbb{Z}$ and  prove that transfinite asymptotic dimension of sh$\bigcup\prod_{i=1}^{\infty}i\mathbb{Z}$ is no more than $\omega+1$. Finally, we give a negative answer to the Question 7.1 raised in \cite{BellNag2017}.
\end{section}

\begin{section}{Preliminaries}\

Our terminology concerning the asymptotic dimension follows from \cite{Bell2011} and for undefined terminology we refer to \cite{Radul2010}. Let~$(X, d)$ be a metric space and $U,V\subseteq X$, let
\[
\text{diam}~ U=\text{sup}\{d(x,y): x,y\in U\}
\text{   and   }
d(U,V)=\text{inf}\{d(x,y): x\in U,y\in V\}.
\]

Let $R>0$ and $\mathcal{U}$ be a family of subsets of $X$, $\mathcal{U}$ is said to be \emph{$R$-bounded} if
\[
\text{diam}~\mathcal{U}\stackrel{\bigtriangleup}{=}\text{sup}\{\text{diam}~ U: U\in \mathcal{U}\}\leq R.
\]
$\mathcal{U}$ is said to be \emph{uniformly bounded} if there exists $R>0$
such that $\mathcal{U}$ is $R$-bounded.\\
Let $r>0$, $\mathcal{U}$ is said to be\emph{ $r$-disjoint} if
\[
d(U,V)\geq r~~~~~\text{for every}~ U,V\in \mathcal{U}\text{~and~}U\neq V.
\]

A metric space $X$ is said to have \emph{finite asymptotic dimension} if
there is an $n\in\NN$, such that for every $r>0$,
there exists a sequence of uniformly bounded families
$\{\mathcal{U}_{i}\}_{i=0}^{n}$ of subsets of $X$
such that the family
$\bigcup_{i=0}^{n}\mathcal{U}_{i}$ covers $X$ and each $\mathcal{U}_{i}$
is $r$-disjoint for $i=0,1,\cdots,n$. In this case, we say that asdim$X\leq n$.

We say that asdim$X= n$ if  asdim$X\leq n$ and asdim$X\leq n-1$ is not true.\

A metric space $X$ is said to have \emph{asymptotic property C} if for every sequence $R_0<R_1<...$ of positive real numbers, there exist an $n\in\mathbb{N}$ and uniformly bounded families $\mathcal{U}_0,...,\mathcal{U}_n$ of subsets of $X$ such that each $\mathcal{U}_i$ is $R_i$-disjoint for $i=0,1,\cdots,n$ and the family $\bigcup_{i=0}^n\mathcal{U}_i$ covers $X$.\

Let $X$ and $Y$ be metric spaces. A map $f :X \to Y$ between metric spaces is a \emph{coarse embedding} if there exist
non-decreasing functions $\rho_1$ and $\rho_2$, $\rho_i:\mathbb{R}^+\cup\{0\}\to\mathbb{R}^+\cup\{0\}$ such that $\rho_i(x)\to+\infty$ as $x\to+\infty$ for $i=1,2$ and for every $x,x'\in X$
$$\rho_1(d_X(x,x'))\leq d_Y(f(x),f(x'))\leq\rho_2(d_X(x,x')).$$

It is easy to see that finite asymptotic dimension is a coarsely invariant property of metric spaces.
\begin{lem}\rm(Coarse invariance, see~\cite{Bell2011})
\label{lem:Coarseasdim}
Let $X$ and $Y$ be two metric spaces, let $\phi$ be a coarse embedding from $X$ to $Y$. If asdim$Y\leq n$, then asdim$X\leq n$.
\end{lem}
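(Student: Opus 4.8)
The plan is to pull back, along $\phi$, a cover of $Y$ witnessing $\mathrm{asdim}\,Y\le n$ and to verify that the resulting family in $X$ witnesses $\mathrm{asdim}\,X\le n$. Fix $r>0$. Using the upper control function $\rho_2$, I would first pick a scale $S>0$ with $S>\rho_2(r)$; the purpose of this choice is that whenever $d_Y(\phi(x),\phi(x'))\ge S$ one must have $d_X(x,x')\ge r$, since otherwise $d_X(x,x')<r$ together with monotonicity of $\rho_2$ would give $d_Y(\phi(x),\phi(x'))\le\rho_2(d_X(x,x'))\le\rho_2(r)<S$, a contradiction.

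Next, apply the hypothesis $\mathrm{asdim}\,Y\le n$ to the parameter $S$: there exist uniformly bounded families $\mathcal{V}_0,\dots,\mathcal{V}_n$ of subsets of $Y$, each $S$-disjoint, with $\bigcup_{i=0}^{n}\mathcal{V}_i$ covering $Y$. Define $\mathcal{U}_i=\{\phi^{-1}(V):V\in\mathcal{V}_i\}$ for $i=0,\dots,n$ (discarding empty preimages, which cause no trouble). Since $\bigcup_i\mathcal{V}_i$ covers $Y\supseteq\phi(X)$, the family $\bigcup_{i=0}^{n}\mathcal{U}_i$ covers $X$. Moreover each $\mathcal{U}_i$ is $r$-disjoint: if $\phi^{-1}(V),\phi^{-1}(V')\in\mathcal{U}_i$ are distinct and $x\in\phi^{-1}(V)$, $x'\in\phi^{-1}(V')$, then $d_Y(\phi(x),\phi(x'))\ge d_Y(V,V')\ge S$, so $d_X(x,x')\ge r$ by the previous step, whence $d(\phi^{-1}(V),\phi^{-1}(V'))\ge r$.

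It remains to check that each $\mathcal{U}_i$ is uniformly bounded, and this is where the lower control function $\rho_1$ is used. Let $R$ be a common diameter bound for the members of $\mathcal{V}_0,\dots,\mathcal{V}_n$. Since $\rho_1$ is non-decreasing with $\rho_1(t)\to\infty$, choose $T>0$ so that $\rho_1(t)>R$ for all $t>T$. Then for any $V\in\mathcal{V}_i$ and $x,x'\in\phi^{-1}(V)$ we get $\rho_1(d_X(x,x'))\le d_Y(\phi(x),\phi(x'))\le R$, forcing $d_X(x,x')\le T$; hence every $\mathcal{U}_i$ is $T$-bounded. As $r>0$ was arbitrary and $n$ is independent of $r$, this gives $\mathrm{asdim}\,X\le n$.

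There is no serious obstacle in this argument; the only point requiring care is to fix $S$ and $T$ in the correct direction relative to the monotonicity of $\rho_2$ and $\rho_1$ respectively, since reversing either inequality would invalidate the $r$-disjointness or the uniform boundedness of the pulled-back families.
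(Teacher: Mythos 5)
Your proof is correct and is the standard pullback argument; the paper itself states this lemma without proof (citing Bell--Dranishnikov), and your argument matches the technique the paper uses for the analogous coasdim statement in its Coarse invariant Theorem: choose $S>\rho_2(r)$ to get $r$-disjointness of preimages, and use $\rho_1\to\infty$ to get uniform boundedness. No gaps.
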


In \cite{Radul2010}, T. Radul generalized asymptotic dimension of a metric space $X$ to transfinite asymptotic dimension which is denoted by trasdim$(X)$.

Let $Fin~\mathbb{N}$ denote the collection of all finite, nonempty
subsets of $\mathbb{N}$ and let $ M \subset Fin~\mathbb{N}$. For $\sigma\in \{\varnothing\}\bigcup Fin~\mathbb{N}$, let
$$M^{\sigma} = \{\tau\in Fin\mathbb{N} ~|~ \tau \cup \sigma \in M \text{ and } \tau \cap \sigma = \varnothing\}.$$

Let $M^a$ abbreviate $M^{\{a\}}$ for $a \in \NN$. Define ordinal number Ord$M$ inductively as follows:
\begin{eqnarray*}
\text{Ord}M = 0 &\Leftrightarrow& M = \varnothing,\\
\text{Ord}M \leq \alpha &\Leftrightarrow& \forall~ a\in \mathbb{N}, ~\text{Ord}M^a < \alpha,\\
\text{Ord}M = \alpha &\Leftrightarrow& \text{Ord}M \leq \alpha \text{ and } \text{Ord}M < \alpha \text{ is not true},\\
\text{Ord}M = \infty &\Leftrightarrow& \text{Ord}M \leq\alpha \text{ is not true for every ordinal number } \alpha.
\end{eqnarray*}

\begin{lem}\rm(\cite{Borst1988})
\label{lem:OrdM}
Let $M\subseteq Fin\mathbb{N}$ and $n\in\mathbb{N}$, then Ord$M\leq n$ if and only if for each $\sigma\in M$, $|\sigma|\leq n.$
\end{lem}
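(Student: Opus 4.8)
The plan is to argue by induction on $n$, which is natural because $\mathrm{Ord}\,M$ is itself defined by a recursion that passes from $M$ to the families $M^a$. The one structural observation I would isolate first is the bijective correspondence $\tau\longleftrightarrow\tau\cup\{a\}$ between $M^a$ and $\{\sigma\in M: a\in\sigma\}$: it is well defined because $\tau\cap\{a\}=\varnothing$ for $\tau\in M^a$, and it changes cardinality by exactly one. So I expect the bound ``$|\sigma|\le n+1$ for all $\sigma\in M$'' to correspond, family by family, to ``$|\tau|\le n$ for all $\tau\in M^a$''.

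For the base case $n=0$: by the definition of $\mathrm{Ord}$, $\mathrm{Ord}\,M\le 0$ holds iff $\mathrm{Ord}\,M=0$ iff $M=\varnothing$; and ``$|\sigma|\le 0$ for every $\sigma\in M$'' also holds iff $M=\varnothing$, since every member of $Fin\,\mathbb{N}$ is nonempty. So both sides are equivalent to $M=\varnothing$.

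For the inductive step I would assume the statement for $n$, for all subsets of $Fin\,\mathbb{N}$, and unfold
$$\mathrm{Ord}\,M\le n+1 \iff \forall a\in\mathbb{N}\ \ \mathrm{Ord}\,M^a< n+1 \iff \forall a\in\mathbb{N}\ \ \mathrm{Ord}\,M^a\le n,$$
where the second step uses that $n+1$ is a successor and $\mathrm{Ord}$ takes values among the ordinals and $\infty$. Applying the induction hypothesis to each $M^a$ rewrites the last condition as: $|\tau|\le n$ for every $a\in\mathbb{N}$ and every $\tau\in M^a$. Then I would use the correspondence above to see that this is equivalent to $|\sigma|\le n+1$ for every $\sigma\in M$: in the ``$\Leftarrow$'' direction, any $\tau\in M^a$ gives $\tau\cup\{a\}\in M$ with $a\notin\tau$, so $|\tau|=|\tau\cup\{a\}|-1\le n$; in the ``$\Rightarrow$'' direction, any $\sigma\in M$ with $|\sigma|\ge 2$ can be written as $\tau\cup\{a\}$ for some $a\in\sigma$ and $\tau=\sigma\setminus\{a\}\in M^a$ nonempty, so $|\sigma|=|\tau|+1\le n+1$, while $|\sigma|=1\le n+1$ is automatic.

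The only genuinely delicate points --- and hence the ``main obstacle'' such as it is --- are the boundary conventions: justifying the reduction of ``$\mathrm{Ord}\,M^a<n+1$'' to ``$\mathrm{Ord}\,M^a\le n$'' (including the $\infty$ case, which is consistent with the induction hypothesis), and the degenerate case $|\sigma|=1$ in the forward implication, where one cannot delete an element of $\sigma$ and remain inside $Fin\,\mathbb{N}$; there the bound holds trivially, so nothing is lost. Everything else is straightforward definition-chasing.
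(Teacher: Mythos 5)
The paper states this lemma purely as a citation to Borst and gives no proof of its own, so there is no internal argument to compare against; I can only assess your proof on its merits, and it is correct. Your induction on $n$, with the statement quantified over all $M\subseteq Fin\,\mathbb{N}$ so that the hypothesis can be applied to each $M^a$, is the standard argument: the correspondence $\tau\mapsto\tau\cup\{a\}$ between $M^a$ and the members of $M$ of cardinality at least $2$ that contain $a$ is exactly the right bookkeeping, and you correctly isolate the two boundary points --- the reduction of $\mathrm{Ord}\,M^a<n+1$ to $\mathrm{Ord}\,M^a\le n$ at a successor ordinal (including the $\infty$ case), and the singletons $\sigma\in M$, which have no preimage under the correspondence because $\sigma\setminus\{a\}$ would leave $Fin\,\mathbb{N}$, but which satisfy $|\sigma|=1\le n+1$ trivially. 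The one convention you adopt silently is that the recursive clause $\mathrm{Ord}\,M\le\alpha\Leftrightarrow\forall a\ \mathrm{Ord}\,M^a<\alpha$ is intended only for $\alpha\ge 1$, with $\alpha=0$ governed by the clause $\mathrm{Ord}\,M=0\Leftrightarrow M=\varnothing$; that is indeed the intended reading of Borst's definition, and your base case is correct under it.
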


\begin{lem}\rm(\cite{Borst1988})
\label{lem:Ord}
If $M_{1}\subseteq M_{2}\subseteq Fin\mathbb{N}$, then Ord$M_{1}\leq$ Ord$M_{2}.$
\end{lem}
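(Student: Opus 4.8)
The plan is to establish the lemma by transfinite induction, in the form of the auxiliary claim: for every ordinal $\alpha$ and all $M_1 \subseteq M_2 \subseteq Fin\mathbb{N}$, if $\text{Ord}M_2 \leq \alpha$ then $\text{Ord}M_1 \leq \alpha$. Granting this claim, the lemma follows at once: if $\text{Ord}M_2 = \infty$ there is nothing to prove, and otherwise $\text{Ord}M_2$ is some ordinal $\alpha$, so the claim with this particular $\alpha$ gives $\text{Ord}M_1 \leq \alpha = \text{Ord}M_2$.

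Before running the induction I would record the one elementary fact the argument rests on, namely that the operation $M \mapsto M^{\sigma}$ is monotone. Concretely, if $M_1 \subseteq M_2$ then $M_1^{a} \subseteq M_2^{a}$ for every $a \in \mathbb{N}$: indeed $\tau \in M_1^{a}$ means $\tau \cup \{a\} \in M_1$ and $\tau \cap \{a\} = \varnothing$, and since $M_1 \subseteq M_2$ this gives $\tau \cup \{a\} \in M_2$ with $\tau \cap \{a\} = \varnothing$, i.e. $\tau \in M_2^{a}$. This is immediate from the definition of $M^{\sigma}$.

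The induction on $\alpha$ then proceeds as follows. For $\alpha = 0$: by definition $\text{Ord}M_2 \leq 0$ forces $M_2 = \varnothing$, hence $M_1 = \varnothing$ and $\text{Ord}M_1 = 0 \leq 0$. For the inductive step, assume the claim holds for all ordinals below $\alpha$ and that $\text{Ord}M_2 \leq \alpha$. By the recursive clause defining Ord, $\text{Ord}M_2^{a} < \alpha$ for every $a \in \mathbb{N}$. Fixing $a$ and setting $\beta = \text{Ord}M_2^{a} < \alpha$, the monotonicity observation gives $M_1^{a} \subseteq M_2^{a}$, so the induction hypothesis applied at $\beta$ yields $\text{Ord}M_1^{a} \leq \beta = \text{Ord}M_2^{a} < \alpha$. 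As this holds for every $a$, the defining clause gives $\text{Ord}M_1 \leq \alpha$, completing the induction.

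I do not expect a genuine obstacle here; the proof is routine. The only points that require a little care are to organize the induction on the ordinal bound $\alpha$ rather than attempting to induct on $M_2$ itself (the collection of subsets of $Fin\mathbb{N}$ carries no natural well-order), and to use the equivalence $\text{Ord}M \leq \alpha \iff \forall a,\ \text{Ord}M^{a} < \alpha$ in both directions — reading off the bound for $M_2^a$ and feeding it back to bound $\text{Ord}M_1$.
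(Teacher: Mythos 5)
Your proof is correct: the monotonicity $M_1^{a}\subseteq M_2^{a}$ is verified properly, and the transfinite induction on the ordinal bound $\alpha$ is exactly the right way to organize the argument. The paper itself offers no proof of this lemma --- it is quoted from Borst's paper \cite{Borst1988} --- so there is nothing internal to compare against; your argument is the standard one and fills that gap adequately.
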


Given a metric space $(X, d)$, define the following collection:
\[
\begin{split}
A(X, d) = \{\sigma \in Fin\mathbb{N} |~&\text{ there are no uniformly bounded families } \mathcal{U}_i  \text{ for } i \in \sigma
 \\& \text{ such that each } \mathcal{U}_i
\text{ is } i\text{-disjoint and }\bigcup_{i\in\sigma}\mathcal{U}_i \text{~covers~} X\}.
\end{split}\]

The \emph{transfinite asymptotic dimension} of $X$ is defined as trasdim$X$=Ord$A(X, d)$.

\begin{remark}
\begin{itemize}
\item There are some equivalent definitions of transfinite asymptotic dimension in \cite{Satkiewicz}.
\item  It is not difficult to see that transfinite asymptotic dimension is a generalization of finite asymptotic dimension by Lemma \ref{lem:OrdM}.
That is, $trasdim X\leq n$ if and only if $asdim X\leq n$ for each $n\in\NN$.
\end{itemize}
\end{remark}

A subset $M\subseteq Fin\mathbb{N}$ is said to be \emph{inclusive} if for every $\sigma, \tau\in Fin\mathbb{N}$ such that $\tau\subseteq\sigma$, $\sigma\in M$
implies $\tau\in M$. The following facts are easy to see.
\begin{itemize}
\item $A(X,d)$ is inclusive.
\item If $n<m$, then $A(X,d)^{n}\subseteq A(X,d)^{m}$. It follows that Ord$A(X,d)^{n}\leq$ Ord$A(X,d)^{m}$ by Lemma \ref{lem:Ord}.
\end{itemize}

\begin{lem}\rm(see \cite{Radul2010})
\label{lem:E}
Let $X$ be a metric space, $X$ has asymptotic property $C$ if and only if trasdim$X<\infty$.
\end{lem}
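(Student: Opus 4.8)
The plan is to route everything through the ordinal quantity, since by definition $\mathrm{trasdim}\,X<\infty$ means precisely that $\mathrm{Ord}\,A(X,d)$ is an honest ordinal rather than $\infty$. The crux is the following combinatorial characterization, valid for any inclusive $M\subseteq Fin\,\mathbb{N}$ (and $A(X,d)$ is inclusive): $\mathrm{Ord}\,M=\infty$ if and only if there is a strictly increasing sequence $b_1<b_2<\cdots$ in $\mathbb{N}$ with $\{b_1,\dots,b_n\}\in M$ for every $n\geq 1$. Once this is available, asymptotic property $C$ and $\mathrm{trasdim}\,X<\infty$ are each just the failure of an ``all initial segments lie in $A(X,d)$'' statement for an appropriate increasing sequence, and the equivalence falls out by unwinding the definitions and relabelling families.

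To prove the characterization, for the implication ``chain $\Rightarrow\mathrm{Ord}\,M=\infty$'' I would argue by transfinite induction on $\alpha$ that no $M$ with $\mathrm{Ord}\,M\leq\alpha$ can carry such a chain: when $\alpha=0$ we have $M=\varnothing$, contradicting $\{b_1\}\in M$; for the inductive step, $\mathrm{Ord}\,M^{b_1}<\alpha$ while $M^{b_1}$ still carries the shifted chain $b_2<b_3<\cdots$, since $\{b_2,\dots,b_n\}\cup\{b_1\}=\{b_1,\dots,b_n\}\in M$ and $b_1\notin\{b_2,\dots,b_n\}$, contradicting the inductive hypothesis. For the converse I would use that $\mathbb{N}$ is countable: if $\mathrm{Ord}\,M^a<\infty$ for every $a$, then the countably many ordinals $\mathrm{Ord}\,M^a$ are bounded by some ordinal $\beta$, whence $\mathrm{Ord}\,M\leq\beta+1<\infty$; thus $\mathrm{Ord}\,M=\infty$ forces $\mathrm{Ord}\,M^{a_1}=\infty$ for some $a_1$, and iterating yields distinct $a_1,a_2,\dots$ with each $M^{\{a_1,\dots,a_k\}}$ nonempty, so by inclusiveness $\{a_1,\dots,a_k\}\in M$; reordering the $a_i$ into a strictly increasing sequence and using inclusiveness once more produces the desired chain. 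Much of this is already implicit in \cite{Borst1988}.

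With the characterization in hand the two implications are short. If $\mathrm{trasdim}\,X=\mathrm{Ord}\,A(X,d)<\infty$ and $R_0<R_1<\cdots$ is a sequence of positive reals, choose strictly increasing integers $R_i'\geq R_i$; by the characterization $\{R_0',\dots,R_n'\}\notin A(X,d)$ for some $n$, which by the definition of $A(X,d)$ furnishes uniformly bounded families $\mathcal{U}_0,\dots,\mathcal{U}_n$ with $\mathcal{U}_i$ being $R_i'$-disjoint (hence $R_i$-disjoint) and $\bigcup_{i=0}^n\mathcal{U}_i$ covering $X$, which is exactly asymptotic property $C$. Conversely, if $\mathrm{trasdim}\,X=\infty$, the characterization supplies $a_1<a_2<\cdots$ with $\{a_1,\dots,a_n\}\in A(X,d)$ for all $n$; applying asymptotic property $C$ to the sequence $R_i:=a_{i+1}$ gives some $n$ and uniformly bounded families $\mathcal{V}_0,\dots,\mathcal{V}_n$ with $\mathcal{V}_i$ being $a_{i+1}$-disjoint and $\bigcup_{i=0}^n\mathcal{V}_i$ covering $X$, and relabelling $\mathcal{V}_i$ as the family indexed by $a_{i+1}$ contradicts $\{a_1,\dots,a_{n+1}\}\in A(X,d)$. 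The one genuine obstacle is the combinatorial characterization of $\mathrm{Ord}=\infty$, and within it the direction that produces the infinite chain, which really does need the countability of $\mathbb{N}$ together with transfinite recursion; everything on the metric side is a routine reindexing of covers.
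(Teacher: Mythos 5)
The paper does not actually prove this lemma; it is quoted from Radul \cite{Radul2010}, and your argument is essentially the one given there, resting on Borst's characterization of $\mathrm{Ord}\,M=\infty$ for inclusive $M$ via an infinite increasing chain all of whose initial segments lie in $M$. Your proof of that characterization is correct and standard — transfinite induction in one direction; in the other, the countability of $\mathbb{N}$ to bound $\{\mathrm{Ord}\,M^{a}\}_{a}$ by some $\beta$ (hence $\mathrm{Ord}\,M\leq\beta+1$), iteration, and inclusiveness — and you correctly handle the one delicate point that the recursively chosen $a_{i}$ are automatically distinct because $(M^{\sigma})^{a}=\varnothing$ whenever $a\in\sigma$. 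The metric-space translation is, as you say, routine relabelling, so the proposal is complete.
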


\begin{prop}\label{{iffforomega}}
Given a metric space $X$ with asdim$(X) = \infty$,  let $k\in \mathbb{N}$, the following are equivalent:
\begin{itemize}
\item\rm(1) trasdim$(X) \leq \omega+k$;
\item\rm(2) For every $n\in\NN$, there exists $m(n)\in\NN,$ such that for every~ $d\geq n+k$, there are uniformly bounded families $\mathcal{U}_{-k},\mathcal{U}_{-k+1},\cdots,\mathcal{U}_{m(n)}$ satisfying $\mathcal{U}_i$ is $n$-disjoint for $i=-k,\cdots, 0$, $\mathcal{U}_j$ is $d$-disjoint for $j=1,2,\cdots, m(n)$ and
$\bigcup_{i=-k}^{m(n)}\mathcal{U}_i$ covers $X$. Moreover, $m(n)\rightarrow \infty$ as $n\rightarrow \infty$;
\item\rm(3) For every $n\in\NN$, there exists $m(n)\in\NN,$ such that for every~ $d>0$, there are uniformly bounded families $\mathcal{U}_{-k},\mathcal{U}_{-k+1},\cdots,\mathcal{U}_{m(n)}$ satisfying $\mathcal{U}_i$ is $n$-disjoint for $i=-k,\cdots, 0$, $\mathcal{U}_j$ is $d$-disjoint for $j=1,2,\cdots, m(n)$ and
$\bigcup_{i=-k}^{m(n)}\mathcal{U}_i$ covers $X$. Moreover, $m(n)\rightarrow \infty$ as $n\rightarrow \infty$.
\end{itemize}
\end{prop}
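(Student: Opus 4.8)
The plan is to reduce all three statements to a single combinatorial condition on the set $A(X,d)$ and then prove the cycle (1)$\Rightarrow$(3)$\Rightarrow$(2)$\Rightarrow$(1). First I would record the combinatorial fact that, for $M\subseteq Fin\,\NN$ and $k\in\NN$,
\[
\text{Ord}\,M\leq\omega+k\quad\Longleftrightarrow\quad\sup\{\,|\tau|:\tau\in M^{\rho}\,\}<\infty\ \ \text{for every }\rho\in Fin\,\NN\text{ with }|\rho|=k+1 .
\]
This is a short induction on $k$: for $k=0$ the left side unfolds to ``$\text{Ord}\,M^{a}<\omega$ for every $a$'', which by Lemma \ref{lem:OrdM} says precisely that each $M^{a}$ has members of bounded cardinality; for the inductive step one uses that $\text{Ord}\,M\leq\omega+k+1$ iff $\text{Ord}\,M^{a}\leq\omega+k$ for every $a$ (valid because $\omega+k+1$ is a successor ordinal), together with $(M^{a})^{\tau}=M^{\{a\}\cup\tau}$ for $a\notin\tau$ (and $=\varnothing$ otherwise). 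Substituting $M=A(X,d)$ and unwinding the definition of $A(X,d)$, statement (1) becomes: \emph{for every $\rho\in Fin\,\NN$ with $|\rho|=k+1$ there is $N_{\rho}\in\NN$ such that whenever $\tau\in Fin\,\NN$ satisfies $\tau\cap\rho=\varnothing$ and $|\tau|>N_{\rho}$, there exist uniformly bounded families $\{\mathcal{W}_{i}\}_{i\in\tau\cup\rho}$ with each $\mathcal{W}_{i}$ being $i$-disjoint and $\bigcup_{i\in\tau\cup\rho}\mathcal{W}_{i}$ covering $X$.} I would use this reformulation of (1) in the two remaining implications.

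The implication (3)$\Rightarrow$(2) is immediate. For (2)$\Rightarrow$(1) I would fix $\rho=\{r_{0}<\dots<r_{k}\}$, set $n:=r_{k}=\max\rho$, take $m(n)$ as supplied by (2), and put $N_{\rho}:=m(n)$. Given $\tau=\{t_{1}<\dots<t_{p}\}$ disjoint from $\rho$ with $p>m(n)$, I apply (2) with $d:=\max\{n+k,\,t_{p}\}$: index the $k+1$ resulting $n$-disjoint families by $r_{0},\dots,r_{k}$ (legitimate since each of them is $(\max\rho)$-disjoint, hence $r_{j}$-disjoint), index the $m(n)$ resulting $d$-disjoint families by the $m(n)$ largest elements of $\tau$ (legitimate since $d\geq t_{p}$), and attach the empty family to the remaining indices of $\tau\cup\rho$. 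This is exactly the cover demanded by the reformulation, so (1) holds.

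For (1)$\Rightarrow$(3) I would fix $n$, apply the reformulation to $\rho_{n}:=\{n,n+1,\dots,n+k\}$ to obtain $N_{\rho_{n}}$, and set $m(n):=N_{\rho_{n}}+1$. Given any $d>0$, choose $\tau$ to be a set of $N_{\rho_{n}}+1$ integers each exceeding $\max\{n+k,\,d\}$; then $\tau\cap\rho_{n}=\varnothing$ and $|\tau|>N_{\rho_{n}}$, so the reformulation yields uniformly bounded families $\{\mathcal{W}_{i}\}_{i\in\tau\cup\rho_{n}}$ covering $X$ with $\mathcal{W}_{i}$ being $i$-disjoint; the $k+1$ families indexed by $\rho_{n}$ are $n$-disjoint, the $N_{\rho_{n}}+1$ families indexed by $\tau$ are $d$-disjoint, and $m(n)$ was chosen independently of $d$, which is (3). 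To verify $m(n)\to\infty$ --- the one place the hypothesis asdim$(X)=\infty$ enters --- suppose $m(n)\leq C$ for infinitely many $n$; for each such $n$, taking $\tau=\{n+k+1,\dots,n+k+C\}$ in the reformulation forces $\{n,n+1,\dots,n+k+C\}\notin A(X,d)$, i.e.\ $X$ is covered by $k+C+1$ uniformly bounded $n$-disjoint families, and letting these (arbitrarily large) $n$ vary would yield asdim$(X)\leq k+C<\infty$, a contradiction.

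I expect the main obstacle to be the ordinal unfolding recorded at the outset: although short, it requires care with the successor/limit distinction and with the substitution $(M^{a})^{\tau}=M^{\{a\}\cup\tau}$. The only other slightly delicate point is the index-and-empty-family bookkeeping in (2)$\Rightarrow$(1); everything else, including the asdim$(X)=\infty$ argument, is routine.
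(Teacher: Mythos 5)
Your proposal is correct and follows essentially the same route as the paper: both arguments rest on unfolding $\mathrm{Ord}\,A(X,d)\leq\omega+k$ into the statement that $\mathrm{Ord}\,A(X,d)^{\rho}<\omega$ for every $(k+1)$-element $\rho$, converting this via Lemma \ref{lem:OrdM} into a uniform cardinality bound, and translating non-membership in $A(X,d)$ into the required covers (with the same use of asdim$(X)=\infty$ to get $m(n)\to\infty$). The only differences are organizational: you isolate the ordinal unfolding as an explicit lemma for arbitrary $\rho$ (the paper invokes it implicitly, for consecutive blocks in one direction), and you close the cycle as (1)$\Rightarrow$(3)$\Rightarrow$(2)$\Rightarrow$(1) rather than proving (1)$\Leftrightarrow$(2) and remarking that (2)$\Leftrightarrow$(3) is obvious.
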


\begin{proof}
(1)~$\Rightarrow$ (2): Assume that trasdim$(X) \leq \omega+k$, i.e.,
Ord$A(X,d) \leq \omega+k$.
By definition,  for every $n\in\NN$, let $\tau=\{n,n+1,\cdots, n+k\}\in\text{Fin}\NN$, $\text{Ord}A(X,d)^\tau<\omega.$
It follows that there exists $ m(n)\in\NN$ such that $\text{Ord}A(X,d)^\tau=m(n)-1<m(n).$
By Lemma \ref{lem:OrdM}, for each $\sigma\in A(X,d)^\tau$, $|\sigma|<m(n).$
So for every $d\geq n+k$, let $\alpha=\{d+1,d+2,\cdots,d+m(n)\}$, then $\tau\sqcup\alpha\notin A(X,d)$.
Then there are uniformly bounded families $\mathcal{U}_{-k}, \mathcal{U}_{-k+1}, ...,\mathcal{U}_{m(n)}$ such that $\mathcal{U}_i$ is $n$-disjoint for $i\in\{-k,\cdots, 0\}$, $\mathcal{U}_j$ is $d$-disjoint for $j\in\{1,2,\cdots, m(n)\}$ and
$\bigcup_{i=-k}^{m(n)}\mathcal{U}_i$ covers $X$. Since asdim$(X) = \infty$, we obtain that $m(n)\rightarrow \infty$ whenever $n\rightarrow \infty$.

(2)~$\Rightarrow$ (1): By definition, if suffices to show that for each
$\tau=\{a_{0},a_{1},\cdots,a_{k}\}\in\text{Fin}\NN$,
 $\text{Ord}A(X,d)^\tau<m(n).$ i.e.,
 \[ \text{if~} \sigma\in A(X,d)^\tau, \text{then~} |\sigma|<m(n).
 \]
Now we will prove that if $\sigma\cap\tau=\emptyset$ and $|\sigma|\geq m(n)$, then $\sigma\sqcup\tau\notin A(X,d)$.
 Since $A(X,d)$ is inclusive, it suffices
for us to show $\sigma_{1}\sqcup\tau\notin A(X,d)$ for some $\sigma_{1}=\{b_{1},b_{2},\cdots,b_{m(n)}\}\subseteq\sigma$. Let $n=\max\{a_{0},a_{1},\cdots,a_{k}\}$ and let $d=\max\{b_{1},b_{2},\cdots,b_{m(n)}\}+n+k$,
by condition (2), there are uniformly bounded families $\mathcal{U}_{-k},\mathcal{U}_{-k+1},\cdots,\mathcal{U}_{m(n)}$ satisfying $\mathcal{U}_i$ is $n$-disjoint for $i=-k,\cdots, 0$, $\mathcal{U}_j$ is $d$-disjoint for $j=1,2,\cdots, m(n)$ and
$\bigcup_{i=-k}^{m(n)}\mathcal{U}_i$ covers $X$.
Therefore, $\mathcal{U}_i$ is $a_{i}$-disjoint for $i=-k,\cdots, 0$ and $\mathcal{U}_j$ is $b_{j}$-disjoint for $j=1,2,\cdots, m(n)$.
That is, $\sigma_{1}\sqcup\tau\notin A(X,d)$.

(2)~$\Leftrightarrow$ (3): Obvious.

\end{proof}

We end this section with some technical lemmas similar with Lemma 3 in \cite{Radul2010} with similar proof.
\begin{lem}\rm(see~\cite{Radul2010})
\label{lem:dsup}
 Let $k,n\in\NN$ with $n>2$ and $R\in\RR$ with $R>n$, then there are no $n$-disjoint and $R$-bounded families $\mathcal{V}_1,\mathcal{V}_2,...,\mathcal{V}_k$ in $\ZZ^k$ with maximum metric $d_{\text{max}}$ such that $\bigcup_{i=1}^k\mathcal{V}_i$ covers $[-R,R]^k\bigcap\ZZ^k$.
\end{lem}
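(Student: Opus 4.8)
The plan is to reduce the statement to a classical fact about the $k$-cube --- Lebesgue's covering theorem (equivalently, the $k$-dimensional Hex / Poincar\'e--Miranda theorem) --- with the separation hypothesis $n>2$ playing exactly the role that keeps the relevant ``multiplicity'' small. Throughout put $m=\lfloor R\rfloor$, so that the integer points of $[-R,R]^k$ are precisely $C:=[-m,m]^k\cap\ZZ^k\cong\{0,1,\dots,2m\}^k$; since $R>n>2$ we have $R>3$, hence $2m>R$ and in fact $2m>R+1$, while $2m$ is exactly the $d_{\max}$-distance between any pair of opposite faces of the cube $[-m,m]^k$. Assume, for contradiction, that $\mathcal{V}_1,\dots,\mathcal{V}_k$ are $n$-disjoint, $R$-bounded families covering $C$.

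First I would thicken: replace each member $V$ (of any $\mathcal{V}_i$) by the closed cube $\widetilde V:=V\oplus[-\tfrac12,\tfrac12]^k\subset\RR^k$. Only finitely many members of each $\mathcal V_i$ meet the finite set $C$, so $\{\widetilde V\}$ is a \emph{finite closed cover} of $[-m,m]^k$ (the lattice points of the cube thicken to cover it). Two distinct members of a fixed $\mathcal{V}_i$ are at $d_{\max}$-distance $\ge n\ge 3$, so their thickenings are still at distance $\ge n-1\ge 2>0$ and hence disjoint; therefore every point of $\RR^k$ lies in at most one $\widetilde V$ from each family, i.e. in at most $k$ of the sets $\widetilde V$ altogether. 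On the other hand $\diam_{d_{\max}}\widetilde V\le R+1<2m$, so no $\widetilde V$ can meet two opposite faces of $[-m,m]^k$. By Lebesgue's covering theorem for the cube (a finite closed cover none of whose members meets two opposite faces must have a point lying in at least $k+1$ members), some point has multiplicity $\ge k+1$ --- contradicting the bound $\le k$ just obtained. This contradiction proves the lemma; the case $k=1$ is the degenerate instance, where the statement is simply that the connected set $[-m,m]\cap\ZZ$ of diameter $2m>R$ cannot be a single $R$-bounded member.

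The same argument can be phrased purely combinatorially, in the spirit of Lemma~3 of \cite{Radul2010}: colour each $x\in C$ by $c(x)=\min\{i: x\in\bigcup\mathcal{V}_i\}$ and set $D_i=c^{-1}(i)$; if $x^{(0)},\dots,x^{(\ell)}$ is a path in $D_i$ with $d_{\max}(x^{(s-1)},x^{(s)})\le 1$ then, because $1<n$ and $\mathcal{V}_i$ is $n$-disjoint, all the $x^{(s)}$ lie in one member of $\mathcal V_i$, so $d_{\max}(x^{(0)},x^{(\ell)})\le R$; the $k$-dimensional Hex theorem for $C\cong\{0,\dots,2m\}^k$ then yields an index $j$ and a king-move path inside $D_j$ joining the faces $\{x_j=-m\}$ and $\{x_j=m\}$, which has diameter $\ge 2m>R$ --- contradiction. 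The one point that needs care in either formulation is the exact classical input being cited: for the Lebesgue route one must state the ``no member meets two opposite faces $\Rightarrow$ multiplicity $\ge k+1$'' form precisely, and for the Hex route one must use the king-move ($d_{\max}\le1$) adjacency --- for $\ell^1$-adjacency the assertion fails, as a checkerboard colouring shows --- and the ``some player connects its own pair of opposite faces'' direction of the theorem. Everything else (the inequalities $2m>R+1$, the survival of disjointness under thickening, the monochromatic-path diameter bound) is routine.
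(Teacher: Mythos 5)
Your proof is correct and follows essentially the same route as the paper's: thicken each member of each family so as to obtain a finite closed cover of the cube in which no element meets two opposite faces and every point has multiplicity at most $k$, then contradict Lebesgue's covering theorem (the paper uses $N_1(V)\cap[-R,R]^k$ where you use $V\oplus[-\tfrac12,\tfrac12]^k$ inside $[-m,m]^k$, an immaterial difference). The combinatorial Hex reformulation you sketch is just an equivalent packaging of the same classical input.
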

\begin{proof}
Suppose that there exist $n$-disjoint and $R$-bounded families $\mathcal{V}_1,\mathcal{V}_2,...,\mathcal{V}_k$ in $\ZZ^k$ such that $\bigcup_{i=1}^k\mathcal{V}_i$ covers $[-R,R]^k\bigcap\ZZ^k$. Let $\mathcal{V}=\bigcup_{i=1}^{k}\mathcal{V}_i$ and $\mathcal{U}=\{N_{1}(V)\bigcap[-R,R]^k~|~V\in\mathcal{V}\}$, where $N_1(V)=\{x\in \RR^k~|~d(x,V)\leq1\}$. Then $\mathcal{U}$ is a finite closed cover of cube $[-R,R]^k$ such that
\begin{itemize}
\item\rm(1) no member of which meets two opposite faces of $[-R,R]^k$;
\item\rm(2) each subfamily of $\mathcal{U}$ containing $k+1$ distinct elements of $\mathcal{U}$ has empty intersection.
\end{itemize}
 So we obtain
a contradiction with the Lebesgue's Covering Theorem(see \cite{Engelking}, Theorem 1.8.20).

\end{proof}

\begin{coro}\rm
\label{lem:dsup2}
Let $k,n,l\in\mathbb{N}$ and $n>2l$, let $R\in\RR$ and $R>n$, there are no $n$-disjoint and $R$-bounded families of subsets $\mathcal{V}_1,...,\mathcal{V}_k$ in $(l\mathbb{Z})^k$ with  maximum metric $d_{\text{max}}$ such that $\bigcup_{i=1}^{k}\mathcal{V}_i$ covers $[-R,R]^k\bigcap (l\mathbb{Z})^k$.
\end{coro}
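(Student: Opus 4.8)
The plan is to replay the proof of Lemma~\ref{lem:dsup} almost verbatim, with $\ZZ^k$ replaced by $(l\ZZ)^k$ and the unit thickening $N_1(\cdot)$ replaced by the $l$-thickening $N_l(\cdot)$; the hypothesis $n>2l$ will play exactly the role that $n>2$ played there. One might hope to simply rescale $(l\ZZ)^k$ by the factor $1/l$ and quote Lemma~\ref{lem:dsup} directly, but the rescaled parameters become $n/l$ and $R/l$, and the inequality $R/l>3$ needed to apply Lemma~\ref{lem:dsup} can fail when $2l<n<3l$ (even though the conclusion is still true), so it is cleaner to repeat the argument than to split into cases.

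So I would assume for contradiction that $n$-disjoint, $R$-bounded families $\mathcal V_1,\dots,\mathcal V_k$ in $(l\ZZ)^k$ exist whose union covers $[-R,R]^k\cap(l\ZZ)^k$; discarding the members whose $l$-neighborhood misses the cube, I may take each $\mathcal V_i$ finite. Put $\mathcal V=\bigcup_{i=1}^k\mathcal V_i$ and $\mathcal U=\{\,N_l(V)\cap[-R,R]^k : V\in\mathcal V\,\}$, where $N_l(V)=\{x\in\RR^k : d(x,V)\leq l\}$. I then verify the three facts needed to invoke Lebesgue's Covering Theorem for the $k$-cube. First, $\mathcal U$ is a finite closed cover of $[-R,R]^k$: for $x\in[-R,R]^k$, rounding each coordinate toward the origin to the nearest multiple of $l$ produces $y\in(l\ZZ)^k\cap[-R,R]^k$ with $d_{\max}(x,y)<l$, and since some $V\in\mathcal V$ contains $y$, we get $x\in N_l(V)\cap[-R,R]^k$. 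Second, no member meets two opposite faces of $[-R,R]^k$, because $\diam(N_l(V)\cap[-R,R]^k)\leq\diam V+2l\leq R+2l<2R$ (using $R>n>2l$), while opposite faces of the cube are at distance $2R$. Third, any $k+1$ distinct members of $\mathcal U$ have empty common intersection: if $V,W$ are distinct members of the same $\mathcal V_i$, then $d(V,W)\geq n>2l$, hence $N_l(V)\cap N_l(W)=\varnothing$, so each point of the cube lies in at most one member of $\mathcal U$ arising from a fixed $\mathcal V_i$ and therefore in at most $k$ members altogether. These last two properties contradict Lebesgue's Covering Theorem (\cite{Engelking}, Theorem~1.8.20), which asserts that in any finite closed cover of the $k$-cube no member of which meets two opposite faces, some point belongs to at least $k+1$ members.

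The only genuinely delicate point is the calibration of the thickening radius: it must be at least $l$ for $(l\ZZ)^k$ to be dense enough in $[-R,R]^k$ that $\mathcal U$ covers the cube, and it must be strictly less than $n/2$ so that the thickenings of two distinct members of an $n$-disjoint family remain disjoint; these two demands are compatible precisely when $n>2l$, which is the hypothesis, and the value $l$ itself works. A related bookkeeping point is that in the covering step one must round toward the origin, not to the nearest lattice point, to be sure the rounded point stays inside $[-R,R]^k$, which is why the displacement bound is $d_{\max}(x,y)<l$ rather than $\leq l/2$. Everything else parallels the proof of Lemma~\ref{lem:dsup}.
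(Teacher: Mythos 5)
Your proof is correct and is essentially the argument the paper intends: the corollary is stated without proof as an immediate adaptation of Lemma~\ref{lem:dsup}, and your version—thickening by $N_l$ instead of $N_1$, with $n>2l$ guaranteeing both that the thickened sets cover $[-R,R]^k$ and that thickenings of distinct members of an $n$-disjoint family stay disjoint—is exactly that adaptation, carried out with more care (finiteness of the cover, rounding toward the origin) than the paper itself gives for Lemma~\ref{lem:dsup}. Your remark on why a direct rescaling to quote Lemma~\ref{lem:dsup} verbatim can fail for $2l<n<3l$ is a legitimate observation, not a gap.
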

\end{section}

\begin{section}{Cofinite asymptotic dimension}\

In this section, we define complementary-finite asymptotic dimension (coasdim), which is a generalization of finite asymptotic dimension.

\begin{defi}
\begin{itemize}
\item A metric space $X$ is said to have \emph{complementary-finite asymptotic dimension} if there is a $k\in \mathbb{N}$, such that
for every $n\in\NN,$ there exist $R>0$, $m\in\NN$ and $n$-disjoint, $R$-bounded families $\mathcal{U}_1,...,\mathcal{U}_k$ of subsets of $X$ such that
\[\text{ asdim}(X\setminus \bigcup(\bigcup_{i=1}^k\mathcal{U}_i))\leq m.
\]
In this case, we say that coasdim$(X)\leq fin+k$.

\item We say that coasdim$(X)= fin+1$ if coasdim$(X)\leq fin+1$ and asdim$(X)=\infty$.

\item Let $k\in\NN$ and $k>1$, we say that coasdim$(X)= fin+k$ if coasdim$(X)\leq fin+k$ and coasdim$(X)\leq fin+k-1$ is not true.

\item We say that coasdim$(X)< fin+fin$ if coasdim$(X)\leq fin+k$ for some $k\in\mathbb{N}$.
\end{itemize}
\end{defi}

\begin{exa}
Let $k\ZZ=\{kj|~j\in\ZZ\}$ and $X=\bigcup_{i=1}^{\infty}(i\ZZ)^i$. Let $a=(a_1,a_2,...,a_l)\in (l\ZZ)^l$ and $b=(b_1,b_2,...,b_k)\in (k\ZZ)^k$ with $l\leq k$. Let $a'=(a_1,...,a_l,0,...,0)\in \ZZ^k$. Put $c=0$ if $l=k$ and $c=l+(l+1)+...+(k-1)$ if $l<k$. Define a metric $d$ on $X$ by
$$d(a,b)=\max\{d_{\text{max}}(a',b),c\}\text{ where $d_{\text{max}}$ is the maximum metric in }\ZZ^k.$$

T. Radul proved that trasdim$(X)=\omega$ (see \cite{Radul2010}). We can obtain that coasdim$(X)=fin+1$.

Indeed, it is easy to see that asdim$(X)=\infty$. For every $n\in\NN$, let $\mathcal{U}_1=\{\{x\}|x\in\bigcup_{i=n+1}^{\infty}(i\ZZ)^i\}$. It is easy to see that $\mathcal{U}_1$ is a $n$-disjoint, uniformly bounded family and
\[
\text{asdim}(X\setminus\bigcup \mathcal{U}_1)=\text{asdim}(\bigcup_{i=1}^{n}(i\ZZ)^i)<\infty.
\]

\end{exa}

\begin{exa}
Let $k\in \ZZ^+$ and let $X=\bigcup_{i=1}^{\infty}((i\ZZ)^i\times \ZZ^k)$. Let $a=(a_1,a_2,...,a_{l_{1}},\bar{a}_1,...,\bar{a}_k)\in (l_{1}\ZZ)^{l_{1}}\times \ZZ^k$ and $b=(b_1,b_2,...,b_{l_{2}},\bar{b}_1,...,\bar{b}_k)\in (l_{2}\ZZ)^{l_{2}}\times \ZZ^k$ with $l_{1}\leq l_{2}$. Let
$$a'=(a_1,...,a_{l_{1}},0,...,0,\bar{a}_1,...,\bar{a}_k)\in \ZZ^{l_{2}}\times \ZZ^k.$$

Put $c=0$ if $l_{1}=l_{2}$ and $c=l_{1}+(l_{1}+1)+...+(l_{2}-1)$ if $l_{1}<l_{2}$. Define a metric $d$ on $X$ by
$$d(a,b)=\max\{d_{\text{max}}(a',b),c\},\text{ where $d_{\text{max}}$ is the maximum metric in }\ZZ^{l_{2}}\times \ZZ^k.$$
We will show that coasdim$(X)=fin+k+1$.

\begin{thm}\label{exampleofcoasdimfin+t}
Let $k\in \NN, k\geq1$ and let $X=(\bigcup_{i=1}^{\infty}((i\ZZ)^i\times \ZZ^k),d),$ where $d$ is the metric defined above, then coasdim$(X)=fin+k+1$.
\end{thm}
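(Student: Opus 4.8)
The plan is to establish the two inequalities $\operatorname{coasdim}(X)\leq fin+k+1$ and $\operatorname{coasdim}(X)\leq fin+k$ is not true separately, in analogy with Example~3.3 (the case $k=0$). For the upper bound, fix $n\in\NN$. The idea is to strip off the ``tail'' coordinates using $k+1$ families: first take $\mathcal{U}_1=\{\{x\}\mid x\in\bigcup_{i=n+1}^{\infty}((i\ZZ)^i\times\ZZ^k)\}$, which is $n$-disjoint (because any two distinct points in this part of $X$ live in $(i\ZZ)^i\times\ZZ^k$ with $i\geq n+1$, forcing distance at least $n+1$ by the definition of $d$, the constant $c$ being large) and trivially uniformly bounded. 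Removing $\bigcup\mathcal{U}_1$ leaves the finite union $Y=\bigcup_{i=1}^{n}((i\ZZ)^i\times\ZZ^k)$. By the finite union theorem for asymptotic dimension, $\operatorname{asdim}(Y)<\infty$, so in fact $\operatorname{coasdim}(X)\leq fin+1$ by the same argument as in Example~3.3 --- but this gives the wrong bound, so instead I need to be more careful: the point is to get a sharp statement, and a single family already suffices for finiteness. Let me reconsider: actually the substantive content is the lower bound, and the upper bound $\operatorname{coasdim}(X)\leq fin+k+1$ should be read as the ``honest'' decomposition where one does \emph{not} collapse everything to singletons but rather peels the $\ZZ^k$ factor with $k$ genuinely unbounded-index families plus one more; but since Example~3.3 already shows a single singleton family kills any tail, $\operatorname{coasdim}(X)\leq fin+1\leq fin+k+1$ holds trivially. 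So the real theorem is the \textbf{lower bound}: $\operatorname{coasdim}(X)\leq fin+k$ is false.

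For the lower bound, suppose for contradiction that $\operatorname{coasdim}(X)\leq fin+k$. Then for every $n$ there are $R>0$, $m\in\NN$, and $n$-disjoint $R$-bounded families $\mathcal{U}_1,\dots,\mathcal{U}_k$ with $\operatorname{asdim}(X\setminus\bigcup(\bigcup_{j=1}^k\mathcal{U}_j))\leq m$. Choose $n$ large (say $n>2l$ for the relevant scales, to invoke Corollary~3.7) and also larger than any bound arising from the asymptotic dimension estimate. The strategy is to locate, inside $X$, a copy of a cube $[-R',R']^{m+k+1}\cap(l\ZZ)^{m+k+1}$ for suitably large $l$ and $R'$, sitting inside some stratum $(i\ZZ)^i\times\ZZ^k$ with $i$ chosen so that the metric $d$ restricted to this cube agrees with the maximum metric $d_{\max}$ on $(l\ZZ)^{m+k+1}$ (this uses $i\geq m+1$ so that the first $m+1$ coordinates of $(i\ZZ)^i$ together with the $k$ coordinates of $\ZZ^k$ furnish $m+k+1$ genuine $\ZZ$-like directions, and $i$ large enough that the additive constant $c$ does not interfere inside a bounded cube). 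On this cube, the cover $\bigcup_{j=1}^k\mathcal{U}_j$ together with a witnessing $(m{+}1)$-family cover of $X\setminus\bigcup(\bigcup\mathcal{U}_j)$ (which we may take $r$-disjoint and uniformly bounded for any prescribed $r$, in particular $n$-disjoint) yields $k+(m+1)=m+k+1$ many $n$-disjoint, $R''$-bounded families covering $[-R',R']^{m+k+1}\cap(l\ZZ)^{m+k+1}$, with $R'$ as large as we like. Taking $R'>n$ and $n>2l$ contradicts Corollary~\ref{lem:dsup2} (with $k$ there equal to $m+k+1$).

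The main obstacle is the bookkeeping in the previous paragraph: one must verify that a single stratum $(i\ZZ)^i\times\ZZ^k$ isometrically contains arbitrarily large cubes of the form $[-R',R']^{m+k+1}\cap(l\ZZ)^{m+k+1}$ under $d$ --- this requires choosing $i$ large enough both to supply enough coordinate directions ($i-1\geq m+k+1-k=m+1$, so $i\geq m+2$ suffices for the index count, but one should double check against the convention $a'\in\ZZ^{l_2}\times\ZZ^k$) and to ensure the ``gluing constant'' $c$ between strata is irrelevant on the cube, which is automatic since within one stratum $c=0$. A subtlety is that $m$ depends on $n$ through the hypothesis, so the choice of $l$ (which must satisfy $n>2l$) and the choice of $i$ (which must exceed $m(n)+2$) must be made \emph{after} $n$ and $m$ are fixed; since the cube side $R'$ is then free, we pick $R'>n$ at the end and the contradiction with Corollary~\ref{lem:dsup2} goes through. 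Finally, to extract an $n$-disjoint uniformly bounded cover of the leftover set $X\setminus\bigcup(\bigcup_{j=1}^k\mathcal{U}_j)$ from $\operatorname{asdim}\leq m$, I invoke the definition of finite asymptotic dimension with parameter $r=n$, which is exactly what that definition provides.
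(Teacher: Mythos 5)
Your proposal has genuine gaps in both halves. For the upper bound, you start with the correct idea (peel off the tail with $k+1$ families obtained by crossing singletons with a $(k+1)$-colored cover of $\ZZ^k$ — this is exactly what the paper does), but then abandon it in favor of the claim that the single singleton family $\mathcal{U}_1=\{\{x\}\mid x\in\bigcup_{i=n+1}^{\infty}((i\ZZ)^i\times\ZZ^k)\}$ is $n$-disjoint, concluding coasdim$(X)\leq fin+1$. This is false: two points of the same stratum $(i\ZZ)^i\times\ZZ^k$ that agree in the $(i\ZZ)^i$ part and differ in one $\ZZ^k$ coordinate are at distance $1$ (the constant $c$ is $0$ within a stratum, and the $\ZZ^k$ coordinates are not rescaled). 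The presence of the $\ZZ^k$ factor is precisely what defeats the singleton trick of Example 3.1, and your conclusion coasdim$(X)\leq fin+1$ would contradict the lower bound you prove next (for $k\geq 1$ it implies coasdim$(X)\leq fin+k$). You should have noticed this internal inconsistency.

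For the lower bound, your plan to embed a cube $[-R',R']^{m+k+1}\cap(l\ZZ)^{m+k+1}$ into a single stratum and invoke Corollary 2.1 in dimension $m+k+1$ runs into a circularity you acknowledge but do not resolve. To furnish $m+1$ coordinate directions beyond the $\ZZ^k$ factor you must sit inside $(i\ZZ)^i\times\ZZ^k$ with $i\geq m+1$, and those directions have lattice spacing $i$, so the common lattice has spacing $l\geq i\geq m+1$; Corollary 2.1 then requires $n>2l\geq 2(m(n)+1)$. But $m=m(n)$ is produced by the hypothesis \emph{after} $n$ is chosen and is not controlled in terms of $n$, so there is no guarantee that any $n$ satisfies this, and the contradiction never materializes. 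The paper's proof of Claim 2 sidesteps this entirely: it applies the Lebesgue-type lemma only in the $\ZZ^k$ direction (dimension $k$, spacing $1$, needing only $n>2$) to each slice $\{x\}\times\ZZ^k$ with $x\in(l\ZZ)^l$, producing an uncovered point $\Delta(x)$ in each slice; the map $x\mapsto(x,\Delta(x))$ is then a coarse embedding of $(l\ZZ)^l$ into the complement of $\bigcup\bigcup_i\mathcal{U}_i$, and choosing $l>n+R+m$ \emph{after} $R$ and $m$ are known yields $m<l=\mathrm{asdim}((l\ZZ)^l)\leq m$, a contradiction. The essential idea you are missing is this slicing-plus-coarse-embedding step, which converts the failure of the $k$ families to cover the $\ZZ^k$ fibers into a high-dimensional subspace of the leftover set.
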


\end{exa}
\begin{proof}
$\mathbf{Claim~1:}$ coasdim$(X)\leq fin+k+1$.\

Since asdim~$\ZZ^{k}=k$,
for every $n\in \NN$ and $n>2$, there are $n$-disjoint and uniformly bounded families $\mathcal{U}_1,...,\mathcal{U}_{k+1}$ such that $\bigcup_{i=1}^{k+1}\mathcal{U}_i$ covers $\ZZ^k$. For $i=1,2,\cdots,k+1,$ let
$$\mathcal{V}_i=\{\{x\}\times U~|~x\in \bigcup_{j=n+1}^{\infty}(j\ZZ)^j \text{ and } U\in \mathcal{U}_i\}.$$

It is easy to see $\mathcal{V}_i$ is $n$-disjoint and uniformly bounded.  Moreover,
 \[\text{asdim}~(X\setminus(\bigcup(\bigcup_{i=1}^{k+1} \mathcal{V}_i)))= \text{asdim}~(\bigcup_{j=1}^{n}((j\ZZ)^j\times \ZZ^k))<\infty.\]

$\mathbf{Claim~2:}$ coasdim$(X)\leq fin+k$ is not true.\

Suppose that coasdim$(X)\leq fin+k$, then for every $n\in\NN$, there exist $R>n$, $ m\in\NN$ and  $n$-disjoint $R$-bounded families $\mathcal{U}_1,...,\mathcal{U}_k$ such that
\[\text{ asdim}(X\setminus \bigcup(\bigcup_{i=1}^k\mathcal{U}_i))\leq m.
\]

Choose $l\in\NN$ such that $l>n+R+m$ and let $\mathcal{U}_i(l)\triangleq\{U\cap((l\ZZ)^l\times \ZZ^k)|~U\in\mathcal{U}_i\}$. Then
\[\text{asdim$(((l\ZZ)^l\times \ZZ^k)\setminus (\bigcup(\bigcup_{i=1}^k\mathcal{U}_i(l))))\leq \text{ asdim}(X\setminus \bigcup(\bigcup_{i=1}^k\mathcal{U}_i))\leq m$}.
\]

Fix $x\in (l\ZZ)^l$, for $i=1,2,\cdots,k$, if $U\cap (\{x\}\times \ZZ^{k})\neq\emptyset \text{~for some~} U\in\mathcal{U}_i$,
then let $V_{U}=P_{\ZZ^{k}}(U\cap (\{x\}\times \ZZ^{k}))$, where $P_{\ZZ^{k}}(.)$ is the projection onto $\ZZ^{k}$.
Let $\mathcal{V}_i(x)=\{V_{U}~|~U\cap (\{x\}\times \ZZ^{k})\neq\emptyset\text{~and~}U\in\mathcal{U}_i\}$. If $U\cap (\{x\}\times \ZZ^{k})=\emptyset$ for every $U\in\mathcal{U}_i$,
then let $\mathcal{V}_i(x)=\emptyset$.
Then the family $\mathcal{V}_i(x)$ is $n$-disjoint and $R$-bounded.

Note that if $\mathcal{V}_i(x)\neq\emptyset$, then
\[\mathcal{U}_i(l)=\{\{x\}\times V|x\in (l\ZZ)^l,~V\in \mathcal{V}_i(x)\}.\]

Indeed, for any $(x,y),(x',y')\in U\in \mathcal{U}_i(l)$ with $x,x'\in (l\mathbb{Z})^l$ and $y,y'\in \mathbb{Z}^k$, if $x\neq x'\in (l\ZZ)^l$, then $d((x,y),(x',y'))\geq l>R$, which is a contradiction to the $R$-boundedness of $U$. \

Therefore
\[((l\ZZ)^l\times ([-R,R]^k\bigcap\ZZ^k))\setminus (\bigcup(\bigcup_{i=1}^k\mathcal{U}_i(l)))=\bigcup_{x\in (l\ZZ)^l}(\{x\}\times (([-R,R]^k\bigcap\ZZ^k)\setminus \bigcup(\bigcup_{i=1}^k\mathcal{V}_i(x)))).\]

By Lemma \ref{lem:dsup}, for any $x\in(l\ZZ)^l$, $(\{x\}\times([-R,R]^k\bigcap\ZZ^k)\setminus \bigcup(\bigcup_{i=1}^k\mathcal{V}_i(x)))\neq\emptyset$.
Hence there exists $(x, \Delta(x))\in (\{x\}\times([-R,R]^k\bigcap\ZZ^k)\setminus \bigcup(\bigcup_{i=1}^k\mathcal{V}_i(x)))$ such that
\[(x,\Delta(x))\in ((l\ZZ)^l\times ([-R,R]^k\bigcap\ZZ^k))\setminus (\bigcup(\bigcup_{i=1}^k\mathcal{U}_i(l)))=\bigcup_{x\in (l\ZZ)^l}(\{x\}\times (([-R,R]^k\bigcap\ZZ^k)\setminus \bigcup(\bigcup_{i=1}^k\mathcal{V}_i(x)))).\]

Define a map $\delta$ from $(l\ZZ)^l$ to $((l\ZZ)^l\times ([-R,R]^k\bigcap\ZZ^k))\setminus (\bigcup(\bigcup_{i=1}^k\mathcal{U}_i(l))) $ by
$$\delta(x)=(x,\Delta(x)).$$

Since
\[
\forall x,y\in (l\ZZ)^l, d(x,y)\leq d(\delta(x),\delta(y))\leq d(x,y)+2R,
 \]
$\delta$ is a coarse embedding from  $(l\ZZ)^l$ to $((l\ZZ)^l\times ([-R,R]^k\bigcap\ZZ^k))\setminus (\bigcup(\bigcup_{i=1}^k\mathcal{U}_i(l))).$
Therefore,
\[
m<l=\text{asdim}((l\ZZ)^l)\leq\text{asdim}(((l\ZZ)^l\times ([-R,R]^k\bigcap\ZZ^k))\setminus(\bigcup(\bigcup_{i=1}^k\mathcal{U}_i(l))))
\]
\[
\leq\text{asdim$(((l\ZZ)^l\times \ZZ^k)\setminus (\bigcup(\bigcup_{i=1}^k\mathcal{U}_i(l))))$}\leq m,
\]
which is a contradiction.

\end{proof}

\begin{remark}
It is worth to notice that coasdim$(X)\leq fin+k$ for some $k\in\mathbb{N}$ if and only if for every
$ n > 0, \exists~ m(n)\in\NN, \exists$ $n$-disjoint and uniformly bounded families of subsets $\mathcal{U}_{-1},\mathcal{U}_{-2},\cdots,\mathcal{U}_{-k}$ such that for every $d >0$,
$\exists~ d$-disjoint uniformly bounded families $\mathcal{U}_0, \mathcal{U}_1, ..., \mathcal{U}_{m(n)}$ with
$\bigcup(\bigcup_{i=-k}^{m(n)}\mathcal{U}_i)=X$.
\end{remark}

By Proposition 2.1, the difference between coasdim$(X)\leq fin+k$ and trasdim$(X)\leq\omega+k-1$ is whether the families $\mathcal{U}_{-1},\mathcal{U}_{-2},\cdots,\mathcal{U}_{-k}$ is independent of the number $d$. So it is easy to obtain the following Theorem.

\begin{thm}\label{coasdimfin+1impliestrasdimomega}
Let $X$ be a metric space , if coasdim$(X)\leq fin+k$ for some $k\in\mathbb{N}$, then trasdim$(X)\leq\omega+k-1$. Especially, $coasdim(X)=fin+1$ implies $trasdim(X)=\omega$.
\end{thm}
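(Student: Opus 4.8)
The plan is to derive directly from the definition of coasdim and the characterization of trasdim$\leq\omega+(k-1)$ provided by Proposition 2.1, using the Remark immediately preceding the theorem as the bridge. Concretely, I would start by unwinding what coasdim$(X)\leq fin+k$ gives us: a fixed $k$ so that for every $n\in\NN$ there are $n$-disjoint, $R$-bounded families $\mathcal{U}_1,\dots,\mathcal{U}_k$ with asdim$(X\setminus\bigcup(\bigcup_{i=1}^k\mathcal{U}_i))\leq m$ for some $m=m(n)\in\NN$. The point of the Remark is that, since asdim of the leftover part is $\leq m$, for \emph{every} $d>0$ one can cover that leftover part $Y_n:=X\setminus\bigcup(\bigcup_{i=1}^k\mathcal{U}_i)$ by $d$-disjoint uniformly bounded families $\mathcal{W}_0,\dots,\mathcal{W}_m$ (this is just the definition of finite asymptotic dimension applied to $Y_n$ with the parameter $d$); restricting these to $Y_n$ keeps them $d$-disjoint and uniformly bounded, and together with $\mathcal{U}_1,\dots,\mathcal{U}_k$ they cover all of $X$. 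So we obtain: for every $n$, there exist $n$-disjoint uniformly bounded $\mathcal{U}_1,\dots,\mathcal{U}_k$ (depending only on $n$) such that for every $d>0$ there are $d$-disjoint uniformly bounded $\mathcal{W}_0,\dots,\mathcal{W}_{m(n)}$ with $\bigcup_{i=1}^k\mathcal{U}_i\cup\bigcup_{j=0}^{m(n)}\mathcal{W}_j$ covering $X$.

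Next I would match this against Proposition 2.1(3) with $k$ replaced by $k-1$. There the required data is: families $\mathcal{U}_{-(k-1)},\dots,\mathcal{U}_0$ that are $n$-disjoint (there are $k$ of them, indices $-(k-1),\dots,0$) and, for every $d>0$, families $\mathcal{U}_1,\dots,\mathcal{U}_{m(n)}$ that are $d$-disjoint, with the union covering $X$, and $m(n)\to\infty$. Our $k$ many $n$-disjoint families $\mathcal{U}_1,\dots,\mathcal{U}_k$ play the role of $\mathcal{U}_{-(k-1)},\dots,\mathcal{U}_0$; our $d$-disjoint families $\mathcal{W}_0,\dots,\mathcal{W}_{m(n)}$ play the role of $\mathcal{U}_1,\dots,\mathcal{U}_{m(n)+1}$ after re-indexing (so the new $m(n)$ is the old $m(n)+1$). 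The only remaining hypothesis of Proposition 2.1(3) is that $m(n)\to\infty$ as $n\to\infty$; but this is forced because asdim$(X)=\infty$ — indeed, if $m(n)$ stayed bounded along some subsequence, then $X$ would be covered by a bounded number ($k+m+1$) of uniformly bounded, $d$-disjoint families for arbitrarily large $d$, which would give asdim$(X)<\infty$, a contradiction. (One small case to flag: if asdim$(X)<\infty$ then trasdim$(X)=$asdim$(X)<\omega\le\omega+k-1$ trivially, so we may freely assume asdim$(X)=\infty$ and invoke Proposition 2.1.) Hence Proposition 2.1(3)$\Rightarrow$(1) yields trasdim$(X)\leq\omega+(k-1)$.

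For the "especially" clause: when $k=1$, the above gives trasdim$(X)\leq\omega+0=\omega$; and since coasdim$(X)=fin+1$ by definition includes asdim$(X)=\infty$, we have trasdim$(X)\geq\omega$ as well (because asdim$(X)=\infty$ means, by Lemma 2.1 / the Remark after it, that trasdim$(X)\not\leq n$ for any finite $n$, i.e. trasdim$(X)\geq\omega$). Therefore trasdim$(X)=\omega$. The main — and really the only — obstacle is getting the re-indexing and the "for all $d$" quantifier order exactly right when passing from the coasdim data to the hypotheses of Proposition 2.1(3): one must be careful that the $k$ families $\mathcal{U}_1,\dots,\mathcal{U}_k$ are chosen before $d$ and do not depend on $d$ (they don't, since they come straight from the coasdim definition), whereas the families covering the finite-asdim remainder are chosen after $d$. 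Everything else is bookkeeping, and the verification that $m(n)\to\infty$ is immediate from asdim$(X)=\infty$.
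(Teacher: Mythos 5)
Your argument is correct and is exactly the route the paper takes: the paper proves the theorem by combining the Remark preceding it (coasdim$(X)\leq fin+k$ gives $k$ $n$-disjoint families chosen independently of $d$, plus $d$-disjoint families covering the finite-dimensional remainder) with Proposition 2.1 applied with parameter $k-1$. Your write-up just makes explicit the re-indexing and the $m(n)\to\infty$ check that the paper leaves to the reader.
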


But the inverse is not true. Let $X=\bigcup_{i=1}^{\infty}((2^i\ZZ)^i\times \ZZ)$ be the metric space in Example 3.2,  we can obtain that coasdim$X= fin+2$, but trasdim$(X)=\omega$ by the result below.

\begin{prop}
Let $X=\bigcup_{i=1}^{\infty}((2^i\ZZ)^i\times \ZZ)$ be the metric space with the metric $d$ in Example 3.2, then trasdim$(X)=\omega$.
\end{prop}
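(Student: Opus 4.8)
The plan is to prove $\mathrm{trasdim}(X)=\omega$ for $X=\bigcup_{i=1}^\infty((2^i\ZZ)^i\times\ZZ)$ by establishing both inequalities $\mathrm{trasdim}(X)\ge\omega$ and $\mathrm{trasdim}(X)\le\omega$. The first is immediate: since $X$ contains isometric copies of $(2^i\ZZ)^i$ for every $i$, which are coarsely equivalent to $\ZZ^i$, we have $\mathrm{asdim}(X)=\infty$, hence by the Remark after Lemma~\ref{lem:E} (that $\mathrm{trasdim}X\le n$ iff $\mathrm{asdim}X\le n$) we get $\mathrm{trasdim}(X)\ge\omega$. The substance is the upper bound $\mathrm{trasdim}(X)\le\omega$, equivalently $\mathrm{Ord}\,A(X,d)\le\omega$, which by definition means $\mathrm{Ord}\,A(X,d)^a<\omega$ for every $a\in\NN$, i.e.\ for every $a$ there is $m(a)\in\NN$ with $|\sigma|\le m(a)$ for all $\sigma\in A(X,d)^a$ (using Lemma~\ref{lem:OrdM}). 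Unwinding, it suffices to show: for every $n\in\NN$ there is $m(n)\in\NN$ such that for every $d>0$ there exist uniformly bounded families $\mathcal{U}_0,\mathcal{U}_1,\dots,\mathcal{U}_{m(n)}$ with $\mathcal{U}_0$ being $n$-disjoint, each $\mathcal{U}_j$ ($j\ge1$) being $d$-disjoint, and $\bigcup_{j=0}^{m(n)}\mathcal{U}_j$ covering $X$. In fact I will show the stronger statement with a single $n$-disjoint family followed by $d$-disjoint families, which is exactly the pattern in Proposition~\ref{{iffforomega}}(3) with $k=1$ — but here crucially the number of $n$-disjoint families stays bounded (namely one), whereas in the coasdim$\le fin+2$ description one needs two $n$-disjoint families.

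The key geometric point is that the coefficient $2^i$ grows fast enough that once we throw away the "tail" $\bigcup_{i>N}((2^i\ZZ)^i\times\ZZ)$ using a single $n$-disjoint family of singletons (valid as long as $2^{N+1}>n$, i.e.\ $N\ge\log_2 n$), what remains is the finite union $\bigcup_{i=1}^N((2^i\ZZ)^i\times\ZZ)$, which has finite asymptotic dimension. First I would fix $n$ and choose $N=N(n)$ minimal with $2^{N+1}>n$; set $\mathcal{U}_0=\{\,\{x\}: x\in\bigcup_{i>N}((2^i\ZZ)^i\times\ZZ)\,\}$, which is $n$-disjoint (any two distinct points of $(2^i\ZZ)^i$ with $i>N$ are at distance $\ge 2^i\ge 2^{N+1}>n$, and points in different strata $i\ne i'$ are at distance $\ge$ the "shift constant" $c$ which is $\ge\min(i,i')\ge N+1$, hence $\ge 2^{N+1}>n$ after checking the metric definition — this needs a short verification that $c\ge 2^{N+1}$ or else absorbing small-$c$ pairs, but since $c= l_1+(l_1+1)+\cdots$ with $l_1>N$ we get $c> N+1$; to be safe one may instead note the first coordinate already separates points in different strata by at least... — I will simply verify $d(a,b)\ge 2^{N+1}$ directly). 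Then I would invoke $\mathrm{asdim}\big(\bigcup_{i=1}^N((2^i\ZZ)^i\times\ZZ)\big)<\infty$ — indeed each $(2^i\ZZ)^i\times\ZZ$ is coarsely equivalent to $\ZZ^{i+1}$ so has $\mathrm{asdim}=i+1$, and a finite union of finite-asymptotic-dimension spaces has finite asymptotic dimension (finite-union theorem for asdim) — say it equals $m(n)-1$. Finiteness of $\mathrm{asdim}$ gives, for the given $d$, $d$-disjoint uniformly bounded families $\mathcal{U}_1,\dots,\mathcal{U}_{m(n)}$ covering $\bigcup_{i=1}^N((2^i\ZZ)^i\times\ZZ)$. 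Together with $\mathcal{U}_0$ these cover $X$, establishing the displayed condition.

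One subtlety to handle carefully: the families $\mathcal{U}_1,\dots,\mathcal{U}_{m(n)}$ are chosen as subsets of the subspace $Y_N:=\bigcup_{i=1}^N((2^i\ZZ)^i\times\ZZ)$, and $n$-disjointness/boundedness there transfers to $X$ without change since $Y_N$ is a metric subspace (distances are inherited); so no issue. Another point: we must confirm $m(n)\to\infty$ as $n\to\infty$ is \emph{not} needed for the upper bound $\mathrm{trasdim}(X)\le\omega$ (it is only relevant to pin down the value exactly as $\omega$ rather than something smaller, but that is already handled by $\mathrm{asdim}(X)=\infty\Rightarrow\mathrm{trasdim}(X)\ge\omega$). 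So the logical structure is clean. I expect the only mildly delicate step to be the explicit distance verification showing that the singleton family $\mathcal{U}_0$ on the tail is genuinely $n$-disjoint under the shift-metric $d$ of Example~3.2 — one must check both the within-stratum estimate ($\ge 2^i$) and the cross-stratum estimate (via the constant $c$ and the padded-coordinate comparison $d_{\max}(a',b)$), but in both cases the bound is at least $2^{N+1}>n$ by the choice of $N$, so this is routine bookkeeping rather than a real obstacle. The genuinely conceptual content — that one $n$-disjoint family suffices, unlike the $fin+2$ situation — is exactly what makes $\mathrm{trasdim}=\omega$ rather than $\omega+1$, and it comes for free from the super-linear growth of the coefficients $2^i$.
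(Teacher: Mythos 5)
Your reduction to the covering statement (one $n$-disjoint family $\mathcal{U}_0$ plus $m(n)$ many $d$-disjoint families, via Proposition \ref{{iffforomega}}) is the right target, and the lower bound is fine. But the construction of $\mathcal{U}_0$ fails: the space here is $\bigcup_{i=1}^{\infty}((2^i\ZZ)^i\times\ZZ)$, not $\bigcup_{i=1}^{\infty}(2^i\ZZ)^i$, and the family of singletons on the tail $\bigcup_{i>N}((2^i\ZZ)^i\times\ZZ)$ is not $n$-disjoint. Two points $(x,t)$ and $(x,t+1)$ with the same $x\in(2^i\ZZ)^i$ lie in the same stratum and differ only in the last $\ZZ$-coordinate, so their distance is $1$ regardless of how large $2^i$ is. You appear to have transplanted the singleton trick from Example 3.1, where it is valid, to Example 3.2, where the extra $\ZZ$ factor destroys it. The problem is not repairable by a better disjointness check: your architecture chooses the cut-off $N$ and the family $\mathcal{U}_0$ depending only on $n$, with $\mathrm{asdim}$ of the complement finite, which is precisely the assertion $\mathrm{coasdim}(X)\leq fin+1$ --- and the paper proves earlier in this section (Claim 2 of the theorem in Example 3.2, with $k=1$) that this is false, indeed $\mathrm{coasdim}(X)=fin+2$. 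So any correct proof must let the single $n$-disjoint family depend on $d$ as well, which is exactly the extra freedom that trasdim $\leq\omega$ affords over coasdim $\leq fin+1$; your remark that the result ``comes for free from the super-linear growth of the coefficients'' is where the real work is being skipped.

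The paper's proof uses a three-part decomposition depending on both parameters $n$ and $r$ (where $r$ plays the role of your $d$): the head $\bigcup_{i\leq n}$ has finite asymptotic dimension and is covered by $r$-disjoint families; the tail $\bigcup_{i\geq r}$ is covered by \emph{two} $r$-disjoint families of sets of the form $\{x\}\times[\text{interval of length }r]$ (singletons in the $(2^i\ZZ)^i$-direction, intervals in the $\ZZ$-direction); and the middle range $\bigcup_{n<i\leq r}$, which is isometric to a subset of $(2^n\ZZ)^r\times\ZZ\times[\cdot]$, is covered by one $n$-disjoint family $\mathcal{V}_0$ and one $r$-disjoint family $\mathcal{V}_1$ via an interleaving construction: the offset $p(x)=\sum_j(\frac{x_j}{2^r}-\lfloor\frac{x_j}{2^r}\rfloor)2^{rj}$ staggers the $n$-wide gaps $I_{x,k}$ in the $\ZZ$-direction so that for distinct $x,x'$ at distance less than $2^r$ the gaps are at least $r$ apart, making $\mathcal{V}_1$ genuinely $r$-disjoint. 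This middle-range construction is the conceptual core of the proposition and is entirely absent from your proposal.
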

\begin{proof}
By Proposition 2.1, it suffices to show that for every $n\in\NN$, $n>2$ and for every $r\in\NN$ and $r>n$, there are uniformly bounded families $\mathcal{U}_0, \mathcal{U}_1, \cdots, \mathcal{U}_{n+5}$ such that
$\mathcal{U}_0$ is $n$-disjoint, $\mathcal{U}_i$ is $r$-disjoint for $i=1,2,\cdots, n+5$ and $\bigcup_{i=0}^{n+5}\mathcal{U}_i$ covers $X$.
Let
$$\mathcal{U}_{n+4}=\{\{x\}\times [(2k+1)r,(2k+2)r]:x\in \bigcup_{i=r}^{\infty}(2^i\ZZ)^i,k\in\ZZ\}$$
$$\mathcal{U}_{n+5}=\{\{x\}\times [(2k)r,(2k+1)r]:x\in \bigcup_{i=r}^{\infty}(2^i\ZZ)^i,k\in\ZZ\}.$$
It is easy to see that $\mathcal{U}_{n+4}$ and $\mathcal{U}_{n+5}$ are $r$-disjoint, uniformly bounded families.  Moreover, $\mathcal{U}_{n+4}\bigcup\mathcal{U}_{n+5}$ covers  $\bigcup_{i=r}^{\infty}((2^i\ZZ)^i\times\ZZ)$.

Define a map $\varphi: (\bigcup_{i=1}^{n}((2^i\ZZ)^i\times \ZZ), d)\longrightarrow (\ZZ^{n+1}\times [0,\frac{n(n-1)}{2}],d_{\text{max}})$ as follows:\
$$\varphi(\widetilde{x})=(x_{1},x_{2},\cdots, x_{i},0,\cdots,0,x_{\ast},\frac{i(i-1)}{2}),~\forall~\widetilde{x}=(x_{1},x_{2},\cdots, x_{i},x_{\ast})\in(2^i\ZZ)^i\times\ZZ.$$
Note that $\varphi$ is an isometric map. Since asdim$(\ZZ^{n+1}\times[0, \frac{n(n-1)}{2}], d_{\text{max}})\leq n+1$, asdim$(\bigcup_{i=1}^{n}((2^i\ZZ)^i\times \ZZ))\leq n+1$ by Lemma \ref{lem:Coarseasdim}. Hence there are uniformly bounded families $\mathcal{U}_{2}, \mathcal{U}_{3},\cdots, \mathcal{U}_{n+3}$ such that each $\mathcal{U}_{i}$
is $r$-disjoint and $\bigcup_{i=2}^{n+3}\mathcal{U}_i$ covers $\bigcup_{i=1}^{n}((2^i\ZZ)^i\times\ZZ)$.

Now we are going to show that there are uniformly bounded families $\mathcal{U}_{0}$ and $\mathcal{U}_{1}$ such that $\mathcal{U}_{0}$ is $n$-disjoint,
 $\mathcal{U}_{1}$ is $r$-disjoint and $\mathcal{U}_{0}\bigcup \mathcal{U}_{1}$ covers $\bigcup_{i=n+1}^{r}((2^i\ZZ)^i\times\ZZ)$.

There is an isometric map $\psi: (\bigcup_{i=n+1}^{r}((2^i\ZZ)^i\times\ZZ), d)\longrightarrow (((2^n\ZZ)^r\times\ZZ\times [\frac{n(n+1)}{2},\frac{r(r-1)}{2}]),d_{\text{max}})$ defined as follows:
$$\psi(\widetilde{x})=(x_{1},x_{2},\cdots, x_{i},0,\cdots,0,x_*,\frac{i(i-1)}{2}),~\forall~ \widetilde{x}=(x_{1},x_{2},\cdots, x_{i},x_*)\in(2^i\ZZ)^i\times\ZZ.$$

Hence it is suffices to show there are uniformly bounded families $\mathcal{V}_{0}$ and $\mathcal{V}_{1}$ such that $\mathcal{V}_{0}$ is $n$-disjoint,
 $\mathcal{V}_{1}$ is $r$-disjoint and $\mathcal{V}_{0}\bigcup \mathcal{V}_{1}$ covers $(2^n\ZZ)^r\times\ZZ\times [\frac{n(n+1)}{2},\frac{r(r-1)}{2}]$.

For every $~x=(x_{1},x_{2},\cdots, x_{r})\in (2^n\ZZ)^r$,
let
\[
p(x)=\sum_{j=1}^{r}(\frac{x_j}{2^{r}}-\lfloor\frac{x_j}{2^{r}}\rfloor)2^{rj}.
\]
It is not difficult to see
\[
p(x)\in[0,\sum_{j=1}^{r}2^{rj}-1]\cap\ZZ.
\]
For every $~k\in\ZZ\setminus\{0\}$, let
\[
 I_{x,k}=[(k\sum_{j=1}^{r}2^{rj}+p(x))(r+n)-n, (k\sum_{j=1}^{r}2^{rj}+p(x))(r+n)]
\]
\[
 J_{x,k}=[(k\sum_{j=1}^{r}2^{rj}+p(x))(r+n), ((k+1)\sum_{j=1}^{r}2^{rj}+p(x))(r+n)-n].
\]

Then we denote
\[
U_{x,k}=\{x\}\times J_{x,k}\times [\frac{n(n+1)}{2},\frac{r(r-1)}{2}]\text{,   }~~
V_{x,k}=\{x\}\times I_{x,k}\times [\frac{n(n+1)}{2},\frac{r(r-1)}{2}]
\]
and
\[
\mathcal{V}_{0}=\{U_{x,k}~|~x\in(2^n\ZZ)^r, k\in\ZZ\}\text{, }~~\mathcal{V}_{1}=\{V_{x,k}~|~x\in(2^n\ZZ)^r, k\in\ZZ\}.
\]

It is easy to see that $\mathcal{V}_{0}$ is $n$-disjoint and uniformly bounded, $\mathcal{V}_{1}$ is uniformly bounded  and  $\mathcal{V}_{0}\bigcup \mathcal{V}_{1}$ covers $(2^n\ZZ)^r\times\ZZ\times [\frac{n(n+1)}{2},\frac{r(r-1)}{2}]$.

Now  we only need to prove that $\mathcal{V}_{1}$ is $r$-disjoint.\\
For every $V_{x,k}=\{x\}\times I_{x,k}\times [\frac{n(n+1)}{2},\frac{r(r-1)}{2}], V_{x',k'}=\{x'\}\times I_{x',k'}\times [\frac{n(n+1)}{2},\frac{r(r-1)}{2}]\in \mathcal{V}_1$ and $V_{x,k}\neq V_{x',k'}$.
\begin{itemize}
\item If $d(x,x')\geq 2^{r}$, then $d(V_{x,k},V_{x',k'})\geq 2^r\geq r$.
\item If $0<d(x,x')< 2^{r}$, then $|p(x)-p(x')|\geq 1$. Since
$$\min\{(k\sum_{j=1}^{r}2^{rj}+p(x)-k'\sum_{j=1}^{r}2^{rj}-p(x'))(r+n):d(x,x')< 2^{r}\text{ and }k,k'\in\ZZ\}\geq r+n,$$
then
$$\min\{d(I_{x,k},I_{x',k'}):d(x,x')< 2^{r}\text{ and }k,k'\in\ZZ\}\geq r$$
and hence $d(V_{x,k},V_{x',k'})\geq r$.
\item If $d(x,x')=0$, then $x=x'$. since $V_{x,k}\neq V_{x,k'}$, we have $k\neq k'$. It follows that $d(V_{x,k},V_{x,k'})=d(I_{x,k},I_{x,k'})\geq r$.

\end{itemize}
Therefore $\mathcal{V}_{1}$ is $r$-disjoint and uniformly bounded.

Let $\mathcal{U}_{0}=\{\psi^{-1}(V)~|~V\in\mathcal{V}_{0}\}$ and $\mathcal{U}_{1}=\{\psi^{-1}(V)~|~V\in\mathcal{V}_{1}\}$,
then $\mathcal{U}_{0}$ is $n$-disjoint, uniformly bounded and $\mathcal{U}_{1}$ is $r$-disjoint, uniformly bounded.
Moreover, $\mathcal{U}_{0}\cup\mathcal{U}_{1}$ covers $\bigcup_{i=n+1}^{r}((2^i\ZZ)^i\times\ZZ)$.
\end{proof}

\textbf{Question:} For $k\in\NN$ and $k>1$, does coasdim$(X)=fin+k$ imply trasdim$(X)=\omega$? \\

Similar with finite asymptotic dimension,  complementary-finite asymptotic dimension is invariant under some operations.

\begin{prop}\rm(Coarse invariant Theorem)
\label{prop:coarseinvariant}
Let $X$ and $Y$ be two metric spaces, let $\phi:X\longrightarrow Y$ be a coarse embedding from $X$ to $Y$. If  coasdim$(Y)\leq fin+k$  for some $k\in\NN$, then  coasdim$(X)\leq fin+k$.
\end{prop}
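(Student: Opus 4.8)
The plan is to pull the witnessing families back from $Y$ to $X$ along $\phi$ and to identify the leftover part of $X$ as the $\phi$-preimage of the leftover part of $Y$, so that the coarse invariance of asymptotic dimension (Lemma~\ref{lem:Coarseasdim}) finishes the argument. Let $\rho_1,\rho_2$ be the non-decreasing unbounded functions witnessing that $\phi$ is a coarse embedding, so that $\rho_1(d_X(x,x'))\leq d_Y(\phi(x),\phi(x'))\leq\rho_2(d_X(x,x'))$ for all $x,x'\in X$.

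Fix $n\in\NN$. First I would choose $s\in\NN$ with $s>\rho_2(n)$, which is possible since $\rho_2$ is finite-valued; then apply the hypothesis coasdim$(Y)\leq fin+k$ with the parameter $s$ in place of $n$, obtaining $R>0$, $m\in\NN$ and $s$-disjoint, $R$-bounded families $\mathcal{U}_1,\dots,\mathcal{U}_k$ of subsets of $Y$ with asdim$\bigl(Y\setminus\bigcup(\bigcup_{i=1}^{k}\mathcal{U}_i)\bigr)\leq m$. For $i=1,\dots,k$ set $\mathcal{V}_i=\{\phi^{-1}(U)\mid U\in\mathcal{U}_i\}$ (discarding empty preimages). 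I claim $\mathcal{V}_1,\dots,\mathcal{V}_k$ are the families required for $X$ at level $n$.

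Next I would verify the two metric estimates. For boundedness: if $x,x'\in\phi^{-1}(U)$ with $U\in\mathcal{U}_i$, then $\rho_1(d_X(x,x'))\leq d_Y(\phi(x),\phi(x'))\leq R$, hence $d_X(x,x')\leq R':=\sup\{t\geq 0\mid\rho_1(t)\leq R\}$, and $R'<\infty$ because $\rho_1(t)\to\infty$; thus every $\mathcal{V}_i$ is $R'$-bounded. For disjointness: if $U\neq U'$ in $\mathcal{U}_i$, $x\in\phi^{-1}(U)$ and $x'\in\phi^{-1}(U')$, then $d_Y(\phi(x),\phi(x'))\geq d_Y(U,U')\geq s$, while also $d_Y(\phi(x),\phi(x'))\leq\rho_2(d_X(x,x'))$; were $d_X(x,x')\leq n$ we would get $s\leq\rho_2(d_X(x,x'))\leq\rho_2(n)<s$, a contradiction, so $d_X(x,x')\geq n$ and each $\mathcal{V}_i$ is $n$-disjoint.

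Finally I would handle the leftover set. A point $x\in X$ lies outside $\bigcup(\bigcup_{i=1}^{k}\mathcal{V}_i)$ precisely when $\phi(x)\notin U$ for every $U\in\bigcup_{i}\mathcal{U}_i$, that is, when $\phi(x)\in Y_0:=Y\setminus\bigcup(\bigcup_{i=1}^{k}\mathcal{U}_i)$; hence $X\setminus\bigcup(\bigcup_{i=1}^{k}\mathcal{V}_i)=\phi^{-1}(Y_0)$. The restriction of $\phi$ is a coarse embedding of $\phi^{-1}(Y_0)$ into $Y_0$ (the same $\rho_1,\rho_2$ work on the subspaces), so Lemma~\ref{lem:Coarseasdim} gives asdim$(\phi^{-1}(Y_0))\leq$ asdim$(Y_0)\leq m$. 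Since $n\in\NN$ was arbitrary, $R'$, $m$ and the families $\mathcal{V}_1,\dots,\mathcal{V}_k$ witness coasdim$(X)\leq fin+k$. I do not anticipate a serious obstacle here; the only delicate point is the bookkeeping of strict versus non-strict inequalities when passing through the monotone maps $\rho_1,\rho_2$ (which dictates the choices of $s$ and $R'$), together with the clean behaviour of the leftover set under preimage that lets us invoke the coarse invariance of asdim on the subspaces.
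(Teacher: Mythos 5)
Your proof is correct and follows essentially the same route as the paper: apply the hypothesis in $Y$ at a disjointness scale large enough relative to $\rho_2(n)$, pull the families back along $\phi$ (using $\rho_2$ for disjointness and $\rho_1$ for boundedness), identify the leftover set of $X$ as $\phi^{-1}$ of the leftover set of $Y$, and invoke Lemma~\ref{lem:Coarseasdim}. You simply spell out the quantitative choices ($s>\rho_2(n)$, $R'=\sup\{t:\rho_1(t)\leq R\}$) that the paper leaves implicit.
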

\begin{proof}
For every $r>0$, since $\phi$ is a coarse embedding, there exists $m>0$ such that for every $x,y\in X$, if $d(\phi(x),\phi(y))\geq m$,
then $d(x,y)\geq r$. Since coasdim$(Y)\leq fin+k$, there exist $R>0$ and
$~m$-disjoint $R$-bounded families $\mathcal{U}_1,...,\mathcal{U}_k $ such that asdim$(Y\backslash(\bigcup(\bigcup_{i=1}^k\mathcal{U}_i)))\leq n$ for some $n\in\NN$.
For $i=1,...,k$, let $\mathcal{W}_i=\{\phi^{-1}(U)|U\in \mathcal{U}_i\}$, then
$\mathcal{W}_i$ is $r$-disjoint. Since $\phi$ is a coarse embedding,  there exists $S>0$ such that $\mathcal{W}_i$ is $S$-bounded families of subsets in $X$.
By Lemma \ref{lem:Coarseasdim},
asdim$(X\backslash(\bigcup(\bigcup_{i=1}^k\mathcal{W}_i)))\leq n$.  Hence, coasdim$(X)\leq fin+k$.
\end{proof}

\begin{coro}\label{almostfiniteunderinclusion}
Let $Y$ be a metric space and $X\subset Y$, if coasdim$(Y)\leq fin+k$, then coasdim$(X)\leq fin+k$.
\end{coro}

\begin{defi}\label{dimuniformlyless}(\cite{Bell2011})
Let $\{X_{\alpha}\}_{\alpha}$ be a family of metric spaces, we say that asdim$(X_{\alpha})\leq n$ uniformly if
for every $r>0$, there exists $R>0$ such that for each $\alpha$, there exist $r$-disjoint and $R$-bounded families $\mathcal{U}_{\alpha}^0,  \mathcal{U}_{\alpha}^1,\cdots, \mathcal{U}_{\alpha}^n$~ of subsets of $X_{\alpha}$ satisfying
$\bigcup(\bigcup_{i=0}^n\mathcal{U}_{\alpha}^i)=X_{\alpha}$.
\end{defi}

\begin{defi}\label{almostfinitedimuniformlyless}
Let $\{X_{\alpha}\}_{\alpha}$ be a family of metric spaces, we say that coasdim~$(X_{\alpha})\leq fin+k$ uniformly if
\[
\text{for every } n>0, \text{there exist~} m=m(n)\in\NN, R=R(n)>0, \text{such that for each~} \alpha, \exists~n\text{-disjoint and $R$-bounded  } \] families $\mathcal{U}_{\alpha}^1,...,\mathcal{U}_{\alpha}^k$ of subsets of $X_{\alpha}$ with
$$ \text{ asdim}(X_{\alpha}\backslash \bigcup(\bigcup_{i=1}^k\mathcal{U}_{\alpha}^i))\leq m\text{~uniformly }.$$
\end{defi}

\begin{defi}(\cite{Bell2011})
Let $\mathcal{U}$ and $\mathcal{V}$ be families of subsets of $X$. The \emph{$r$-saturated union} of $\mathcal{V}$ with $\mathcal{U}$ is defined as
$$\mathcal{V}\cup_r\mathcal{U}=\{N_r(V;\mathcal{U})|V\in\mathcal{V}\}\cup\{U\in\mathcal{U}|d(U,\mathcal{V})>r\},$$
where $N_r(V;\mathcal{U}) = V \cup\bigcup_{d(U,V)\leq r} U$ and $d(U,\mathcal{V})>r$ means that for every $V\in\mathcal{V}$, $d(U,V)>r$.
\end{defi}

\begin{lem}\rm(Proposition 7 in \cite{Bell2011})
\label{r-saturated}
Let $\mathcal{U}$ be a $r$-disjoint and $R$-bounded family of subsets of $X$ with $R\geq r$. Let $\mathcal{V}$ be a $5R$-disjoint, $D$-bounded family of subsets of $X$. Then the family $\mathcal{V}\cup_r\mathcal{U}$ is $r$-disjoint, $D+2R+2r$-bounded.

\end{lem}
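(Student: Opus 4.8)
The plan is to verify the two asserted properties of $\mathcal{V}\cup_r\mathcal{U}$ in turn, keeping in mind that every member of this family is either a saturated set $N_r(V;\mathcal{U})=V\cup\bigcup\{U\in\mathcal{U}\mid d(U,V)\leq r\}$ for some $V\in\mathcal{V}$, or else a leftover set $U\in\mathcal{U}$ satisfying $d(U,V')>r$ for all $V'\in\mathcal{V}$. Throughout I would use only the triangle inequality together with the standing hypothesis $r\leq R$, which lets every term of size $r$ be absorbed into a term of size $R$.

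For the diameter bound, I would take two points $p,q\in N_r(V;\mathcal{U})$ and build a chain between them through $V$: if $p$ lies in a set $U_i\in\mathcal{U}$ with $d(U_i,V)\leq r$, then crossing $U_i$ (length $\leq R$) and then passing into $V$ (length $\leq r$, up to an arbitrarily small $\varepsilon$ should the infimum not be attained) lands on a point of $V$; doing the same with $q$ and then crossing $V$ (length $\leq D$) gives $d(p,q)\leq R+r+D+r+R=D+2R+2r$, and the sub-cases where $p$ or $q$ already lies in $V$ only shorten the chain. Since a leftover set $U$ has diameter $\leq R\leq D+2R+2r$, the family $\mathcal{V}\cup_r\mathcal{U}$ is $(D+2R+2r)$-bounded.

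For $r$-disjointness I would treat three cases for a pair of distinct members. If both are leftover sets $U\neq U'$, then $d(U,U')\geq r$ since $\mathcal{U}$ is $r$-disjoint. If one is $N_r(V;\mathcal{U})$ and the other a leftover set $U$, observe that $U$ is not among the sets $U_i$ saturated into $V$ (those satisfy $d(U_i,V)\leq r$ whereas $d(U,V)>r$), so $d(U,U_i)\geq r$ for every such $U_i$ by $r$-disjointness of $\mathcal{U}$, and also $d(U,V)>r$; hence $d\big(U,N_r(V;\mathcal{U})\big)\geq r$. The remaining case, two saturated sets $N_r(V;\mathcal{U})$ and $N_r(V';\mathcal{U})$ with $V\neq V'$, is the crucial one and is where the $5R$-disjointness of $\mathcal{V}$ enters. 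Assuming toward a contradiction that $d(p,q)<r$ for some $p\in N_r(V;\mathcal{U})$ and $q\in N_r(V';\mathcal{U})$, I would chain from $p$ back into $V$ (across at most one member of $\mathcal{U}$, contributing $\leq R+r$), then make the short jump of length $<r$ from $p$ to $q$, then chain from $q$ back into $V'$ (another $\leq R+r$), obtaining $d(V,V')<(R+r)+r+(R+r)=2R+3r\leq 5R$, which contradicts $d(V,V')\geq 5R$.

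The step I expect to be the only real obstacle is that last chaining estimate, for two reasons that must be handled with a little care. First, the inequalities $d(U_i,V)\leq r$ are defined by infima that need not be attained, so one has to insert error terms $\varepsilon$ and let them tend to $0$; this is harmless because the jump $d(p,q)<r$ is strictly less than $r$, giving enough slack to absorb the $\varepsilon$'s even in the borderline case $r=R$. Second, one should check that all the other sub-configurations of $p$ and $q$ (one of them lying directly in $V$ or $V'$, rather than in an attached member of $\mathcal{U}$) lead to strictly shorter chains and hence a fortiori to $d(V,V')<5R$. Once these points are dispatched, the rest is routine bookkeeping.
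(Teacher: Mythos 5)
Your argument is correct. Note that the paper itself gives no proof of this lemma: it is imported verbatim as Proposition 7 of the cited Bell--Dranishnikov reference, so there is no in-paper argument to compare against. Your three-case check of $r$-disjointness and the chaining bound $R+r+D+r+R=D+2R+2r$ are exactly the standard saturated-union argument from that source, and you correctly identify and dispatch the one delicate point, namely that the strict inequality $d(p,q)<r$ supplies the slack needed to absorb the infimum errors and turn $2R+3r\leq 5R$ into a genuine contradiction with $5R$-disjointness of $\mathcal{V}$.
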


\begin{thm}\label{finiteunion}
Let $X=\bigcup_{\alpha} X_{\alpha}$ be a metric space where the family $\{X_{\alpha}\}_{\alpha}$ satisfies coasdim~$(X_{\alpha})\leq fin+k$ uniformly for some $k\in\NN$.
For every $r>0,$ if there is a $Y_r\subseteq X$ with coasdim~$(Y_r)\leq fin+k$ and
\[d(X_{\alpha}\backslash Y_r,X_{\alpha'}\backslash Y_r)\geq r \text{~~whenever~~} X_{\alpha}\neq X_{\alpha'},
\]
then coasdim~$(X)\leq fin+k$.
\end{thm}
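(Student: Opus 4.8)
The plan is to adapt the standard union-theorem argument for asymptotic dimension (as in \cite{Bell2011}) to the coasdim setting, by combining the "cofinite" families coming from the $X_\alpha$'s with the "cofinite" families coming from $Y_r$ via the $r$-saturated union operation of Lemma \ref{r-saturated}. Fix $n>0$; our goal is to produce $n$-disjoint, uniformly bounded families $\mathcal{W}_1,\dots,\mathcal{W}_k$ on $X$ together with a bound $m'=m'(n)$ such that $\operatorname{asdim}(X\setminus\bigcup(\bigcup_{i=1}^k\mathcal{W}_i))\le m'$. The first step is to choose the scale parameters in the right order. By the uniform hypothesis on the $X_\alpha$, choose $R=R(n')>0$ and $m=m(n')\in\NN$ (for a suitably inflated $n'$, say $n'=5^{k}n$ or whatever the iterated saturation forces) so that for each $\alpha$ there are $n'$-disjoint, $R$-bounded families $\mathcal{U}_\alpha^1,\dots,\mathcal{U}_\alpha^k$ on $X_\alpha$ with $\operatorname{asdim}(X_\alpha\setminus\bigcup(\bigcup_{i=1}^k\mathcal{U}_\alpha^i))\le m$ uniformly in $\alpha$. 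Then apply the separation hypothesis: with $r$ chosen large compared to $R$ (e.g. $r=5R$ so Lemma \ref{r-saturated} applies), we get a set $Y_r$ with $\operatorname{coasdim}(Y_r)\le fin+k$ and $d(X_\alpha\setminus Y_r,X_{\alpha'}\setminus Y_r)\ge r$ whenever $X_\alpha\ne X_{\alpha'}$.

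The second step is to assemble the families on $X$. For each fixed index $i\in\{1,\dots,k\}$, set $\mathcal{U}^i=\bigcup_\alpha\{\,U\setminus Y_r : U\in\mathcal{U}_\alpha^i\,\}$. Because the "residual" pieces $X_\alpha\setminus Y_r$ are mutually $r$-separated and each $\mathcal{U}_\alpha^i$ is (at least) $n'$-disjoint inside $X_\alpha$, the amalgamated family $\mathcal{U}^i$ is $\min\{n',r\}$-disjoint and $R$-bounded on all of $X$. Meanwhile, since $\operatorname{coasdim}(Y_r)\le fin+k$, there are $\widetilde n$-disjoint, $D$-bounded families $\mathcal{V}^1,\dots,\mathcal{V}^k$ on $Y_r$ (for $\widetilde n$ chosen large enough, $\widetilde n\ge 5R$) with $\operatorname{asdim}(Y_r\setminus\bigcup(\bigcup_{i=1}^k\mathcal{V}^i))\le p$ for some $p\in\NN$. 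Now define, for $i=1,\dots,k$,
\[
\mathcal{W}_i=\mathcal{V}^i\cup_n\mathcal{U}^i,
\]
the $n$-saturated union of $\mathcal{V}^i$ with $\mathcal{U}^i$. By Lemma \ref{r-saturated} (applied with $r\leftarrow n$, $R\leftarrow R$, and using $\widetilde n\ge 5R$), each $\mathcal{W}_i$ is $n$-disjoint and uniformly bounded, with a bound depending only on $n$, $R$, $D$.

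The third step is to control the leftover set $L=X\setminus\bigcup(\bigcup_{i=1}^k\mathcal{W}_i)$. A point $x\in L$ lies in no $\mathcal{W}_i$; in particular it is not in any $\mathcal{V}^i$-neighborhood, so either $x\notin Y_r$ — and then $x$ is in some unique $X_\alpha\setminus Y_r$ and, not being covered by $\mathcal{U}^i=\bigcup\{U\setminus Y_r\}$, must lie in $X_\alpha\setminus(Y_r\cup\bigcup(\bigcup_{i=1}^k\mathcal{U}_\alpha^i))\subseteq X_\alpha\setminus\bigcup(\bigcup_{i=1}^k\mathcal{U}_\alpha^i)$ — or $x\in Y_r$ and then $x\in Y_r\setminus\bigcup(\bigcup_{i=1}^k\mathcal{V}^i)$. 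Hence $L$ is contained in the union of $Z:=Y_r\setminus\bigcup(\bigcup_{i=1}^k\mathcal{V}^i)$ (which has $\operatorname{asdim}\le p$) together with $\bigcup_\alpha\big(X_\alpha\setminus\bigcup(\bigcup_{i=1}^k\mathcal{U}_\alpha^i)\big)$; the latter is a union of spaces of $\operatorname{asdim}\le m$ uniformly, whose pieces are $r$-separated off of $Z$, so by the finite-dimensional union theorem for asymptotic dimension (taking $Z$ as the "large scale" connecting set) $\operatorname{asdim}(L)\le\max\{m,p\}+1$ — a bound independent of $n$. Setting $m'=\max\{m,p\}+1$ completes the verification that $\operatorname{coasdim}(X)\le fin+k$.

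The main obstacle I expect is bookkeeping of the disjointness scales through the iterated saturated unions: each of the $k$ applications of Lemma \ref{r-saturated} degrades (or requires inflating) the disjointness parameter by a multiplicative constant, and the Lemma as stated handles a single pair $(\mathcal{V},\mathcal{U})$, so one must either iterate carefully or redo the estimate for a $k$-fold amalgam, making sure the bound $m'(n)$ one extracts at the end genuinely does not blow up with $n$ (it should not, since $m$ and $p$ come from the \emph{uniform} coasdim hypotheses and the $+1$ is from the union theorem). A secondary point to get right is the verification that $\operatorname{asdim}$ of the residual union $\bigcup_\alpha(X_\alpha\setminus\cdots)$ together with $Z$ is bounded — this is exactly the hypothesis-shape of the classical union theorem (\cite{Bell2011}), applied at the level of ordinary asymptotic dimension rather than coasdim, so it should go through once the $r$-separation is tracked correctly.
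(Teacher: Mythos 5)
Your construction of the families $\mathcal{W}_1,\dots,\mathcal{W}_k$ is essentially the paper's: restrict each $\mathcal{U}_\alpha^i$ to $X_\alpha\setminus Y_r$, amalgamate over $\alpha$ using the $r$-separation, and saturate with the families coming from $Y_r$ via Lemma \ref{r-saturated}. The gap is in your third step, where you claim $\mathrm{asdim}(L)\le\max\{m,p\}+1$ for the leftover set $L=Z\cup\bigcup_\alpha A_\alpha$ with $A_\alpha\subseteq X_\alpha\setminus\bigcup(\bigcup_{i=1}^k\mathcal{U}_\alpha^i)$. The infinite union theorem for asymptotic dimension requires, for \emph{every} scale $s>0$, a subset of dimension $\le\max\{m,p\}$ off which the pieces are $s$-separated; your pieces $A_\alpha$ are only $r$-separated for the single value of $r$ you froze at the outset (e.g.\ $r=5R$), and $Z$ plays no role in separating them further. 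Separation at one fixed scale gives no bound at all: the singletons of $\ZZ^N$ are uniformly $0$-dimensional and $1$-disjoint, yet their union has asymptotic dimension $N$, and $\bigoplus_{i=1}^{\infty}\ZZ$ shows such a union can be infinite-dimensional. So as written $\mathrm{asdim}(L)$ need not be finite, and the definition of coasdim is not verified; this is not a bookkeeping issue with the saturation constants but a missing idea.

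The paper avoids this by never attempting to bound $\mathrm{asdim}(L)$ once and for all. It works with the two-parameter reformulation of coasdim (Remark 3.1): having fixed $n$ and built the $k$ saturated $n$-disjoint families as you do, it then, \emph{for each} $r>n$, covers the rest of $X$ by $\max\{m,l\}+1$ many $r$-disjoint uniformly bounded families. These are obtained by taking $r$-disjoint covers of the complements $X_\alpha\setminus\bigcup(\bigcup_i\mathcal{U}_\alpha^i)$ by $m+1$ families (available uniformly in $\alpha$ since those complements have $\mathrm{asdim}\le m$ uniformly), amalgamating them using the separation hypothesis at that \emph{same} scale $r$, and saturating with an $r$-disjoint cover of $Y_r$ coming from $\mathrm{coasdim}(Y_r)\le fin+k$. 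In other words, the scale in the hypothesis $d(X_\alpha\setminus Y_r,X_{\alpha'}\setminus Y_r)\ge r$ must be matched to the disjointness scale of the second batch of families and allowed to range over all $r$; to repair your argument you would rerun your construction for every $r$ and check that the number of resulting families stays controlled, which is exactly what the paper's case split $m\le l$ versus $m>l$ does.
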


\begin{proof}
Since coasdim~$(X_{\alpha})\leq fin+k$ uniformly, for every $n>0$, there exist $m=m(n)\in\NN$ and $R=R(n)\geq n$, such that for each $\alpha$, there are $n$-disjoint and $R$-bounded families $\mathcal{U}_{\alpha}^{-1},\mathcal{U}_{\alpha}^{-2},...,\mathcal{U}_{\alpha}^{-k}\text{ of subsets of~}X_{\alpha}$ and
\[
\text{ asdim~}(X_{\alpha}\backslash \bigcup(\bigcup_{i=-k}^{-1}\mathcal{U}_{\alpha}^i))\leq m\text{ uniformly.  }
\]

Then for every $r>n, \exists~ D=D(r)\geq r$ such that for each $\alpha$, there exist $r$-disjoint and $D$-bounded families $\mathcal{U}_{\alpha}^0,...,\mathcal{U}_{\alpha}^{m}$ such that
$\bigcup_{i=0}^{m}\mathcal{U}_{\alpha}^i$ covers $X_{\alpha}\backslash \bigcup(\bigcup_{i=-k}^{-1}\mathcal{U}_{\alpha}^i)$.
So $\bigcup_{i=-k}^{m}\mathcal{U}_{\alpha}^i$ covers $X_{\alpha}$.

Since coasdim~$Y_{r}\leq fin+k$, there exist
$5R$-disjoint, uniformly bounded families
$\mathcal{V}^{-k},...,\mathcal{V}^{-1}$  such that
\[
\text{ asdim~}(Y_{r}\backslash \bigcup(\bigcup_{i=-k}^{-1}\mathcal{V}^i))\leq l\text{ ~for some~} l\in\NN.
\]

Then there exist
$5D$-disjoint and uniformly bounded families $\mathcal{V}^{0},\mathcal{V}^{1},...,\mathcal{V}^{l} \text{ such that }
$ $\bigcup_{i=-k}^{l}\mathcal{V}^i \text{ covers } Y_{r}.$

For $i\in\{-k,\cdots,-1,0,1,\cdots,m\}$,
let $\widetilde{\mathcal{U}_{\alpha}^i}$ to be the restriction of $\mathcal{U}_{\alpha}^i$ to $X_{\alpha}\backslash Y_r$ and
 let $\mathcal{U}^i=\bigcup_{\alpha}\widetilde{\mathcal{U}_{\alpha}^i}$.

For $i\in\{-k,\cdots,-1\}$, let $\mathcal{W}^{i}=\mathcal{V}^{i}\cup_n\mathcal{U}^{i}$.
Since $\widetilde{\mathcal{U}_{\alpha}^i}$ is $n$-disjoint
and
$
d(X_{\alpha}\backslash Y_{r},X_{\alpha'}\backslash Y_{r})\geq r>n$
whenever $\alpha\neq\alpha'$,
$\mathcal{U}^i$ is $n$-disjoint and $R$-bounded.
Since $\mathcal{V}^{-1},\mathcal{V}^{-2},...,\mathcal{V}^{-k}$ are 5$R$-disjoint and uniformly bounded,
$\mathcal{W}^{i}$ is $n$-disjoint and uniformly bounded by Lemma \ref{r-saturated}.

For $i\in\{0,1,\cdots,m\}$,
since $\widetilde{\mathcal{U}_{\alpha}^i}$ is $r$-disjoint
and
$
d(X_{\alpha}\backslash Y_{r},X_{\alpha'}\backslash Y_{r})\geq r$
whenever $\alpha\neq\alpha'$,
$\mathcal{U}^i$ is $r$-disjoint and $D$-bounded.
\begin{itemize}
\item If $m\leq l$, let $\mathcal{W}^{i}=\mathcal{V}^{i}\cup_r\mathcal{U}^{i}$ for $i\in\{0,1,\cdots, m\}$
and $\mathcal{W}^{i}=\mathcal{V}^{i}$ for $i\in\{m+1,\cdots, l\}$.
Then by Lemma \ref{r-saturated}, $\mathcal{W}^i$ is $r$-disjoint and uniformly bounded and
$\bigcup_{i=-k}^{l}\mathcal{W}^{i}$ covers $X$, which means coasdim$(X)\leq fin+k$.
\item If $m> l$, let $\mathcal{W}^{i}=\mathcal{V}^{i}\cup_r\mathcal{U}^{i}$ for $i\in\{0,1,\cdots, l\}$
and $\mathcal{W}^{i}=\mathcal{U}^{i}$ for $i\in\{l+1,\cdots, m\}$. Similarly, $\mathcal{W}^i$ is $r$-disjoint and uniformly bounded and
$\bigcup_{i=-k}^{m}\mathcal{W}^{i}$ covers $X$, which implies coasdim$(X)\leq fin+k$.
\end{itemize}
\end{proof}

\begin{coro}
Let $X$ be a metric space with $X_{1}, X_{2}\subseteq X$. If coasdim$X_{i}\leq fin +k$ for some $k\in\NN$ and $i=1,2$,
then coasdim$(X_{1}\cup X_{2})\leq fin +k$.
\end{coro}
\begin{proof}
Apply Theorem \ref{finiteunion} to the family $\{X_{1}, X_{2}\}$ with $X_{2}=Y_{r}$ for every $r>0$.
\end{proof}

\begin{lem}\rm(Hurewicz Theorem in \cite{Hur})
\label{lem:Hurewicz}
Let $f : X \rightarrow Y$ be a Lipschitz map from a geodesic metric space $X$ to a metric space $Y$. Suppose that for every $R > 0$ the family $\{f^{-1}(B_R(y))\}_{y\in Y}$ satisfies the inequality asdim $\leq m$ uniformly, then asdim $X\leq$ asdim $Y+m$.
\end{lem}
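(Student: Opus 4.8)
This is the classical Hurewicz-type theorem for asymptotic dimension, which the paper imports from \cite{Hur}; here is the plan one would follow for a direct proof. Write $n=$ asdim$\,Y$ (if $n=\infty$ there is nothing to prove, so assume $n<\infty$) and fix a Lipschitz constant $\lambda\ge 1$ for $f$. Since asymptotic dimension is detected by $r$-disjoint uniformly bounded covering families, it suffices to fix an arbitrary $r>0$ and produce $r$-disjoint, uniformly bounded families $\mathcal W_0,\dots,\mathcal W_{n+m}$ of subsets of $X$ whose union is $X$.

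The first move is to transport a good cover of the target. Using asdim$\,Y\le n$, I would pick a large scale $S$ (fixed only at the end) and $S$-disjoint, uniformly bounded families $\mathcal V_0,\dots,\mathcal V_n$ covering $Y$; say each member $V$ has diameter $\le b$, and choose $y_V\in V$, so $f^{-1}(V)\subseteq f^{-1}(B_{b}(y_V))$ and, by the Lipschitz condition, $\{f^{-1}(V):V\in\mathcal V_i\}$ is $(S/\lambda)$-disjoint. The hypothesis that $\{f^{-1}(B_R(y))\}_{y}$ has asdim $\le m$ uniformly (used with $R$ a little larger than $b$) then yields, for each $V$, uniformly bounded families $\mathcal A^0_V,\dots,\mathcal A^m_V$, each $r$-disjoint, whose union covers $f^{-1}(V)$ — and, after first thickening each $V$ to a neighbourhood in $Y$, a neighbourhood of $f^{-1}(V)$ in $X$. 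Within a single target colour $i$, the $(S/\lambda)$-disjointness lets one amalgamate these into $\mathcal C_i^j:=\bigcup_{V\in\mathcal V_i}\mathcal A^j_V$, still $r$-disjoint and uniformly bounded, with $\bigcup_{j=0}^m\mathcal C_i^j$ covering $f^{-1}(\bigcup\mathcal V_i)$ and a bit more. This already produces $(n+1)(m+1)$ families covering $X$, hence only the weak bound asdim$\,X\le (n+1)(m+1)-1$.

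The substance is the reduction of the colour count from $(n+1)(m+1)$ to $n+m+1$. Following \cite{Hur}, the plan is to process the target colours $i=0,1,\dots,n$ in turn: having placed everything coming from colours $<i$, merge the batch $\mathcal C_i^0,\dots,\mathcal C_i^m$ into the already-used colours together with one fresh colour, using $r$-saturated unions in the spirit of the Finite Union Theorem (Lemma \ref{r-saturated}). Because the members of $\mathcal V_i$ are $S$-disjoint for a fixed $i$, the only interference to be controlled is the overlap between distinct target colours, which is absorbed by the thickening built into the $\mathcal C_i^j$ and by the geodesic structure of $X$ (used, among other things, to compare neighbourhoods of the preimages). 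Each step adds exactly one colour, so one ends with $m+1$ colours from $i=0$ plus $n$ more, i.e. $\mathcal W_0,\dots,\mathcal W_{n+m}$; coarse invariance (Lemma \ref{lem:Coarseasdim}) is used throughout to move between $X$ and its images. One must also order the parameter choices correctly — $r$ first, then the uniform fibre bound, then $S$ (taken large enough to swallow all the boundedness constants produced along the way) — so that no circularity arises.

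I expect the hard part to be exactly this last step. The naive pull-back-and-subdivide loses a multiplicative factor, and recovering the \emph{additive} estimate asdim$\,X\le n+m$ is what forces the colour-by-colour amalgamation with saturated unions and the use of the geodesic hypothesis; there is no single delicate computation, but the bookkeeping linking the cover of $Y$, the fibre covers, and the saturated unions is substantial — which is why the statement is quoted from \cite{Hur} rather than reproved in the paper.
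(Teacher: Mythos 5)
You have correctly recognized that the paper does not prove this lemma: it is quoted verbatim from \cite{Hur} (Bell--Dranishnikov) and used as a black box in Theorem \ref{quasihurewicztheorem}, so there is no in-paper argument to compare against. Your first half is a faithful outline of the standard opening of the Bell--Dranishnikov proof: pull back an $S$-disjoint, $b$-bounded $(n+1)$-coloured cover of $Y$, note that each $f^{-1}(V)$ sits inside some $f^{-1}(B_R(y))$, invoke the uniform fibre hypothesis to refine each preimage into $m+1$ $r$-disjoint uniformly bounded families, and amalgamate within a fixed target colour using the $(S/\lambda)$-disjointness of the preimages. That much is sound and does yield the multiplicative bound $\mathrm{asdim}\,X\le (n+1)(m+1)-1$.

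The additive improvement, however, is the entire content of the theorem, and in your write-up it remains a plan rather than an argument. You assert that the batch $\mathcal C_i^0,\dots,\mathcal C_i^m$ coming from target colour $i$ can be merged into the previously used colours plus one fresh colour ``using $r$-saturated unions in the spirit of'' Lemma \ref{r-saturated}, but you do not state the inductive hypothesis being propagated, do not specify the order and magnitude of the scale choices so that the $5R$-disjointness hypothesis of Lemma \ref{r-saturated} is actually met at each merge, and do not pin down the role of the geodesic assumption. In \cite{Hur} that assumption is used in a specific way: because $X$ is geodesic and $f$ is Lipschitz, preimages of subsets of $Y$ that are $D$-separated are $D/\lambda$-separated \emph{and} $f^{-1}$ of a thickened separating set coarsely separates $X$, which is what lets the union theorem for families be applied to the pieces $f^{-1}(V)$ without interference between distinct target colours; ``comparing neighbourhoods of the preimages'' gestures at this but does not establish it. (Coarse invariance, which you invoke ``throughout,'' is not really the engine of that step.) As a roadmap your proposal is accurate and honestly flags where the difficulty lies, but as a proof it has a genuine gap precisely at the step that distinguishes $n+m$ from $(n+1)(m+1)-1$.
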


\begin{thm}\label{quasihurewicztheorem}
Let $f : X \rightarrow Y$ be a 1-Lipschitz map from a geodesic metric space $X$ to a metric space $Y$. Suppose that for every $R > 0$, the family $\{f^{-1}(B_R(y))\}_{y\in Y}$ satisfies the inequality asdim $\leq m$ uniformly and coasdim$(Y)\leq fin+k$, then coasdim$(X)\leq fin+k(m + 1)$.
\end{thm}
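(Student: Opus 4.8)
The plan is to imitate the proof of the classical Hurewicz theorem (Lemma~\ref{lem:Hurewicz}), carrying along the $k$ families of ``long-range'' disjoint sets supplied by coasdim$(Y)\leq fin+k$. Fix $n>0$. Since coasdim$(Y)\leq fin+k$, there are $R>0$, $p\in\NN$ and $n$-disjoint, $R$-bounded families $\mathcal{U}_1,\dots,\mathcal{U}_k$ of subsets of $Y$ with asdim$(Z)\leq p$, where $Z=Y\setminus\bigcup(\bigcup_{i=1}^{k}\mathcal{U}_i)$. Pulling the $\mathcal{U}_i$ back by $f$ preserves $n$-disjointness, since $f$ is $1$-Lipschitz and hence $d(f^{-1}(U),f^{-1}(U'))\geq d(U,U')$; the difficulty is that it destroys boundedness, because $f$ is merely Lipschitz and not a coarse embedding. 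The remedy is to re-subdivide each preimage $f^{-1}(U)$ using the uniform finite-dimensionality of the preimages of balls.

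Concretely, for each $i\in\{1,\dots,k\}$ and each $U\in\mathcal{U}_i$ pick $y_U\in U$; since $\diam U\leq R$ we have $f^{-1}(U)\subseteq f^{-1}(B_{R+1}(y_U))$. By hypothesis the family $\{f^{-1}(B_{R+1}(y))\}_{y\in Y}$ has asdim$\leq m$ uniformly, so Definition~\ref{dimuniformlyless} applied at scale $n$ furnishes $T=T(n)>0$ and, for each $U$, families $\mathcal{A}^0_U,\dots,\mathcal{A}^m_U$ that are $n$-disjoint, $T$-bounded, and cover $f^{-1}(B_{R+1}(y_U))$; restricting them to $f^{-1}(U)$ gives families $\mathcal{B}^0_U,\dots,\mathcal{B}^m_U$ that are $n$-disjoint, $T$-bounded, and cover $f^{-1}(U)$. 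For $i\in\{1,\dots,k\}$ and $j\in\{0,\dots,m\}$ set $\mathcal{W}^{i,j}=\bigcup_{U\in\mathcal{U}_i}\mathcal{B}^j_U$. Two distinct members of $\mathcal{W}^{i,j}$ lying in one common $f^{-1}(U)$ are at distance $\geq n$ because $\mathcal{B}^j_U$ is $n$-disjoint, and two lying in $f^{-1}(U)$ and $f^{-1}(U')$ with $U\neq U'$ in $\mathcal{U}_i$ are at distance $\geq n$ because $\mathcal{U}_i$ is $n$-disjoint and $f$ is $1$-Lipschitz; hence each of these $k(m+1)$ families is $n$-disjoint and $T$-bounded. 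Since $\bigcup(\bigcup_{i=1}^{k}\bigcup_{j=0}^{m}\mathcal{W}^{i,j})=f^{-1}(\bigcup(\bigcup_{i=1}^{k}\mathcal{U}_i))$, the part of $X$ left uncovered by the $\mathcal{W}^{i,j}$ is exactly $f^{-1}(Z)$.

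It remains to check that asdim$(f^{-1}(Z))<\infty$; granting this, for the chosen $n$ we have produced $k(m+1)$ families, each $n$-disjoint and uniformly bounded, whose complement has finite asymptotic dimension, which is exactly what coasdim$(X)\leq fin+k(m+1)$ asserts. For the remaining point, $f$ restricts to a $1$-Lipschitz map $f^{-1}(Z)\to Z$ with asdim$(Z)\leq p$, and $\{f^{-1}(B_\rho(z))\cap f^{-1}(Z)\}_{z\in Z}$ still has asdim$\leq m$ uniformly for every $\rho>0$; rerunning the same subdivision at an arbitrary scale $r>0$ --- cover $Z$ by $p+1$ families that are $r$-disjoint and uniformly bounded, pull them back, and split each preimage into $m+1$ pieces that are $r$-disjoint and uniformly bounded as above --- produces, for every $r>0$, a cover of $f^{-1}(Z)$ by $(p+1)(m+1)$ families that are $r$-disjoint and uniformly bounded, whence asdim$(f^{-1}(Z))\leq(p+1)(m+1)-1<\infty$. (When $f^{-1}(Z)$ happens to be geodesic one could instead invoke Lemma~\ref{lem:Hurewicz} for the sharper bound $p+m$, but only finiteness is needed for the ``$fin$'' part.) The step demanding the most care is the disjointness bookkeeping for the $\mathcal{W}^{i,j}$: one must use a single scale $n$ simultaneously for the coasdim$(Y)$ families $\mathcal{U}_i$, for the uniform-dimension subdivisions of the preimages of balls, and for the $1$-Lipschitz pullback inequality, so that these three sources of separation combine without loss; everything else is a routine adaptation of the classical argument together with the definitions of Section~3.
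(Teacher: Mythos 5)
Your proof is correct, and the main construction coincides with the paper's: pull back the $k$ families $\mathcal{U}_1,\dots,\mathcal{U}_k$ witnessing coasdim$(Y)\leq fin+k$, note that $1$-Lipschitzness preserves $n$-disjointness of the preimages, and use the uniform bound asdim$\leq m$ on preimages of $R$-balls to subdivide each $f^{-1}(U)$ into $m+1$ pieces at scale $n$ with a single bound $T=T(n)$, yielding $k(m+1)$ families of the required kind. Where you diverge is in certifying that the leftover set $f^{-1}(Z)$, $Z=Y\setminus\bigcup(\bigcup_{i=1}^k\mathcal{U}_i)$, has finite asymptotic dimension: the paper invokes the Hurewicz-type Lemma \ref{lem:Hurewicz} to get asdim$(f^{-1}(Z))\leq l$ (the lemma actually gives $l+m$, and strictly speaking it is stated for a geodesic domain, whereas $f^{-1}(Z)$ is only a subspace of the geodesic space $X$), while you rerun the same ball-subdivision argument at an arbitrary scale $r$ on a $(p+1)$-colored cover of $Z$ to get the cruder but entirely elementary bound asdim$(f^{-1}(Z))\leq(p+1)(m+1)-1$. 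Your route buys independence from the geodesic hypothesis on the residual subspace and avoids the slight imprecision in the paper's citation; the paper's route is shorter and gives a better (though irrelevant, since only finiteness matters for the ``$fin$'' part) numerical bound. Both are valid proofs of the stated theorem.
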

\begin{proof}
Since coasdim$(Y)\leq fin+k$,
for every $n>0$, there exist~$R=R(n)>0$, $l=l(n)\in\NN$, $n$-disjoint and $R$-bounded families  $\mathcal{U}_1,...,\mathcal{U}_k$  such that
\[\text{asdim}(Y\setminus (\bigcup(\bigcup_{i=1}^k\mathcal{U}_i)))\leq l.
\]
For $i=1,2,\cdots,k$, since diam~$U\leq R$ for every $U\in \mathcal{U}_i$, there is a $y\in Y$ such that $U\subseteq B_R(y)$.
By the condition, the family $\{f^{-1}(B_R(y))\}_{y\in Y}$ satisfies the inequality asdim $\leq m$ uniformly.
So the family $\{f^{-1}(U)\}_{U\in \mathcal{U}_i}$ satisfies the inequality asdim $\leq m$ uniformly. i.e., there exists $S>0$ for $\forall i\in\{1,...,k\}$ such that for every $U\in \mathcal{U}_i$,
there are $n$-disjoint and $S$-bounded families $\mathcal{W}_{i}^0(U),\mathcal{W}_{i}^1(U),\cdots,\mathcal{W}_{i}^m(U)$ of subsets of $f^{-1}(U)$ such that
\[\bigcup_{j=0}^{m}\mathcal{W}_{i}^j(U) \text{~covers~} f^{-1}(U). \]
Let
$f^{-1}(\mathcal{U}_i)=\{f^{-1}(U)|U\in \mathcal{U}_i\}$. Since $f : X \rightarrow Y$ is a 1-Lipschitz map, $f^{-1}(\mathcal{U}_i)$ is $n$-disjoint.
For $j=0,1,2,\cdots,m$, let $\mathcal{W}_{i}^j=\bigcup_{U\in\mathcal{U}_{i}}\mathcal{W}_{i}^j(U)$. Since $f^{-1}(\mathcal{U}_i)$ is $n$-disjoint,
$\mathcal{W}_{i}^j$ is $n$-disjoint. And
\[\bigcup_{j=0}^{m}\mathcal{W}_{i}^j \text{~covers~} \bigcup_{U\in\mathcal{U}_{i}}f^{-1}(U). \]
Hence there are $n$-disjoint and $S$-bounded families $\{\mathcal{W}_{i}^j\}_{i=1,2,\cdots,k,j=0,1,\cdots,m}$ such that
$$\bigcup_{i=1,2,\cdots,k,j=0,1,\cdots,m}\mathcal{W}_{i}^j \text{~covers~} \bigcup(\bigcup_{i=1}^kf^{-1}(\mathcal{U}_i)).$$
Since asdim$(Y\setminus (\bigcup(\bigcup_{i=1}^k\mathcal{U}_i)))\leq l$, we have asdim$(f^{-1}(Y\setminus (\bigcup(\bigcup_{i=1}^k\mathcal{U}_i))))\leq l$
by Lemma \ref{lem:Hurewicz}. Then
$$\text{asdim}(X\setminus \bigcup_{1\leq i\leq k,0\leq j\leq m}\mathcal{W}_i^j)\leq \text{~asdim~}(f^{-1}(Y\setminus (\bigcup(\bigcup_{i=1}^k\mathcal{U}_i))))\leq l.$$
Therefore, coasdim$(X)\leq fin+k(m + 1)$.

\end{proof}

\end{section}

\begin{section}{ Cofinite asymptotic dimension is not stable under direct product}\

Several authors have studied the so-called product permanence properties. For example, asymptotic property C is one of them\cite{BellNag2017}. But unlike asymptotic property C, cofinite asymptotic dimension is not stable under product.\

Let $(X, d_X)$ and $(Y, d_Y)$ be metric spaces. We can define a metric on the
product $X\times Y$ by$$d_{X\times Y} ((x,y), (x',y')) = \text{max}\{d_X(x,x'),d_Y(y,y')\}.$$

\begin{prop}
\label{new1}
Let $X=\bigcup_{i=1}^{\infty}(2^{i}\mathbb{Z})^i$ be the metric space in Example 3.1, coasdim$(X\times X)< fin+fin$ is not true.
\end{prop}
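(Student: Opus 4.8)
The plan is to argue by contradiction: suppose $\mathrm{coasdim}(X\times X)\leq fin+k$ for some $k\in\NN$. The strategy mirrors Claim 2 in the proof of Theorem \ref{exampleofcoasdimfin+t}, but now the "base" directions must absorb the $k$ peeled-off families and still leave a subspace of large asymptotic dimension. The key point is that $X=\bigcup_{i=1}^{\infty}(2^i\ZZ)^i$ contains, for every $l$, an isometric copy of $(2^l\ZZ)^l$ scaled so that its "gap constant" $c$ dwarfs the parameters $n,R,m$ coming from the assumed decomposition; hence $(X\times X)$ contains an isometric copy of $(2^l\ZZ)^l\times(2^l\ZZ)^l$ which, up to a bounded-distortion rescaling, looks like $\ZZ^{2l}$ on a huge cube. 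So from $\mathrm{coasdim}(X\times X)\leq fin+k$ we will extract, for each $n$, an $n$-disjoint $R$-bounded family $\mathcal{U}_1,\dots,\mathcal{U}_k$ with $\mathrm{asdim}\big((X\times X)\setminus\bigcup(\bigcup_{i=1}^k\mathcal{U}_i)\big)\leq m$, restrict everything to $((2^l\ZZ)^l\times(2^l\ZZ)^l)\cap([-R,R]^{2l}\times\text{cube})$ for suitably large $l$, and derive a contradiction.

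First I would fix $n>2k+2$ (we need $n$ large compared to $k$ for the Lebesgue-type argument) and obtain $R>n$, $m\in\NN$ and $n$-disjoint, $R$-bounded $\mathcal{U}_1,\dots,\mathcal{U}_k$ with $\mathrm{asdim}\big((X\times X)\setminus\bigcup(\bigcup\mathcal{U}_i)\big)\leq m$. Then choose $l\in\NN$ with $l$ large — at least $l>n+R+m+k$ — and restrict each $\mathcal{U}_i$ to the subspace $Z_l:=(2^l\ZZ)^l\times(2^l\ZZ)^l\subseteq X\times X$; writing $\mathcal{U}_i(l)=\{U\cap Z_l\mid U\in\mathcal{U}_i\}$, Corollary \ref{almostfiniteunderinclusion} (monotonicity of $\mathrm{coasdim}$ under subspaces) gives $\mathrm{asdim}\big(Z_l\setminus\bigcup(\bigcup\mathcal{U}_i(l))\big)\leq m$. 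The essential geometric fact is that on $Z_l$ the metric $d$ coincides with the maximum metric on $((l\ZZ)\text{-type lattice})^{2l}$ after rescaling — more precisely, $(2^l\ZZ)^l\times(2^l\ZZ)^l$ with metric $d$ embeds isometrically into $(\ZZ^{2l}\times[c,c'],d_{\max})$ by padding coordinates and appending the constant $\tfrac{l(l-1)}{2}$ in each factor, as in the proofs above. Because each $\mathcal{U}_i$ is $R$-bounded and $2^l\gg R$, no member of $\mathcal{U}_i(l)$ can connect two distinct points of the lattice in a single "block" coordinate, exactly as in the fibering argument of Theorem \ref{exampleofcoasdimfin+t}; so $\mathcal{U}_i(l)$ fibers over one of the lattice factors.

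Next I would run the Lebesgue-covering obstruction: using Corollary \ref{lem:dsup2} with the $2l$-dimensional cube $[-R,R]^{2l}\cap(2^l\ZZ)^{2l}$ and $k<2l$ families, the $k$ families $\mathcal{U}_1(l),\dots,\mathcal{U}_k(l)$ cannot cover the cube, so the complement $Z_l\cap([-R,R]^{2l}\times\text{const})\setminus\bigcup(\bigcup\mathcal{U}_i(l))$ contains, for each choice of the remaining "high" coordinates, a nonempty fiber. Choosing one point $\Delta(x)$ in each such fiber (as a function of the free lattice coordinates $x\in(2^l\ZZ)^{2l-k}$ or the appropriate index set) defines a map $\delta(x)=(x,\Delta(x))$ into the complement that satisfies $d(x,y)\leq d(\delta(x),\delta(y))\leq d(x,y)+2R$, hence a coarse embedding of an isometric copy of $(2^l\ZZ)^{2l-k}$, which has $\mathrm{asdim}=2l-k$. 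By Lemma \ref{lem:Coarseasdim}, $m\geq 2l-k>m$, a contradiction. The main obstacle — and the step needing the most care — is bookkeeping the scaling constants and verifying that the $k$ removed families genuinely only "kill" $k$ of the $2l$ cube-directions rather than interacting across the two $X$-factors: one must check that restricting to a single pair of lattices $(2^l\ZZ)^l\times(2^l\ZZ)^l$ with the product metric really does reduce $d$ to (a bounded perturbation of) $d_{\max}$ on $\ZZ^{2l}$, so that Corollary \ref{lem:dsup2} applies with ambient dimension $2l$ and only $k$ families. Once that reduction is pinned down, the contradiction $2l-k\leq m$ for arbitrarily large $l$ finishes the proof; note this also shows no single $k$ works, i.e. $\mathrm{coasdim}(X\times X)<fin+fin$ fails.
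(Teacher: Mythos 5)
Your overall template (peel off the $k$ families, restrict to a sublattice, invoke the Lebesgue-type Corollary \ref{lem:dsup2} to find an uncovered section, coarsely embed a high-dimensional lattice into the complement) is the right one, but the way you set the scales makes the key step fail. You restrict both factors to $(2^l\ZZ)^l$ with $l>n+R+m+k$ and then try to apply Corollary \ref{lem:dsup2} to the cube $[-R,R]^{2l}\cap(2^l\ZZ)^{2l}$ with $k<2l$ families. This cannot work: the lattice spacing $2^l$ is far larger than the cube radius $R$, so that ``cube'' contains only the origin; and the corollary's hypothesis is that the disjointness exceeds \emph{twice the lattice spacing} (its ``$n>2l$''), whereas your families are only $n$-disjoint with $n\ll 2^l$. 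Worse, on a lattice whose spacing exceeds $R$ every member of a restricted family is a singleton, and the single family of \emph{all} singletons of $(2^l\ZZ)^l\times(2^l\ZZ)^l$ is $n$-disjoint, $R$-bounded and covers everything --- so no covering obstruction of Lebesgue type is available on a lattice that is coarse in \emph{both} factors. The claim that the $k$ removed families ``only kill $k$ of the $2l$ cube-directions'' is exactly the point that cannot be justified in your setup.

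The paper's proof avoids this by treating the two factors asymmetrically. Assuming coasdim$(X\times X)\leq fin+n$, it requests disjointness $k>2^{n+1}$ and keeps the \emph{second} factor fine: it restricts to fibers $\{x\}\times\bigl([-2^nR,2^nR]\cap 2^n\ZZ\bigr)^n$, where the spacing $2^n$ satisfies $k>2\cdot 2^n$, so Corollary \ref{lem:dsup2} genuinely applies (ambient dimension $=$ number of removed families $=n$) and shows the $n$ families miss a point $\Delta(x)$ of each fiber. The \emph{first} factor is restricted to the coarse high-dimensional lattice $(2^m\ZZ)^m$, whose only role is to supply asymptotic dimension $m$ via the coarse embedding $x\mapsto(x,\Delta(x))$, contradicting asdim of the complement $=m-1$. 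To repair your argument you would need to rebalance the scales in the same way: choose the disjointness large relative to the lattice spacing of the factor on which you run the Lebesgue obstruction, and let the other factor carry the dimension. As written, the proposal has a genuine gap at the application of Corollary \ref{lem:dsup2}.
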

\begin{proof}
Suppose that coasdim$(X\times X)< fin+fin$, then there exists $n\in\NN$, such that coasdim$(X\times X)\leq fin+n$.
For every $k>2^{n+1}$, there exist $R>0$, $m\in\ZZ^{+}$ and $k$-disjoint $R$-bounded families $\mathcal{U}_{-n},...,\mathcal{U}_{-1}$ such that asdim$(X\times X\setminus \bigcup(\bigcup_{i=-n}^{-1}\mathcal{U}_i))=m-1$.
By Corollary \ref{lem:dsup2}, for every $x\in (2^{m}\mathbb{Z})^{m}$,
$$(\{x\}\times ([-2^{n}R,2^{n}R]\bigcap 2^{n}\mathbb{Z})^n)\setminus \bigcup(\bigcup_{i=-n}^{-1}\mathcal{U}_i)\neq \emptyset.$$
i.e. $\exists (x,\Delta(x))\in(\{x\}\times ([-2^{n}R,2^{n}R]\bigcap 2^{n}\mathbb{Z})^n)\setminus \bigcup(\bigcup_{i=-n}^{-1}\mathcal{U}_i)$.\
Define a map
$$\delta:(2^{m}\mathbb{Z})^{m}\to (2^{m}\mathbb{Z})^{m}\times([-2^{n}R,2^{n}R]\bigcap 2^{n}\mathbb{Z})^n\setminus \bigcup(\bigcup_{i=-n}^{-1}\mathcal{U}_i)\text{  by }\delta(x)=(x,\Delta(x)).$$

Since $d(x,y)\leq d(\delta(x),\delta(y))\leq d(x,y)+2^{n+1}R$ for every $x,y\in (2^{m}\mathbb{Z})^{m}$, $\delta$ is a coarse embedding.

Therefore,
$$m= \text{asdim}(2^{m}\mathbb{Z})^{m})\leq \text{asdim}(2^{m}\mathbb{Z})^{m}\times([-2^{n}R,2^{n}R]\bigcap 2^{n}\mathbb{Z})^n\setminus \bigcup(\bigcup_{i=-n}^{-1}\mathcal{U}_i))$$
$$\leq \text{asdim}(X\times X\setminus \bigcup(\bigcup_{i=-n}^{-1}\mathcal{U}_i))=m-1$$
which is a contradiction.
\end{proof}

However the transfinite asymptotic dimension of $X\times X$ is not too big. In fact trasdim$(X\times X)=\omega$. In order to prove this result, we need some technical lemmas. The first one is motivated by \cite{Yama2015}.

\begin{lem}\rm
\label{lem1fornew2}
For every $ m,n,k\in \mathbb{N}$, $R>0$, there are $k$-disjoint uniformly bounded family $\mathcal{U}_0$ and $R$-disjoint uniformly bounded families $\mathcal{U}_1,...,\mathcal{U}_{m2^m}$ such that $\bigcup_{i=0}^{m2^m}\mathcal{U}_i$ covers $\mathbb{Z}^m\times (k\mathbb{Z})^n$.
\end{lem}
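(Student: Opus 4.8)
The goal is, for fixed $m,n,k\in\NN$ and $R>0$, to cover $\ZZ^m\times(k\ZZ)^n$ by one $k$-disjoint uniformly bounded family $\mathcal{U}_0$ together with $m2^m$ many $R$-disjoint uniformly bounded families $\mathcal{U}_1,\dots,\mathcal{U}_{m2^m}$. The idea is to handle the ``bad'' factor $\ZZ^m$ (the one where we only have the weak gap $k$ available) by a single $k$-disjoint family $\mathcal{U}_0$ consisting of translated large cubes, and then on the complement use the fact that $\ZZ^m$ can be covered by $m+1$ many $R$-disjoint uniformly bounded families (since $\asdim\ZZ^m=m$), while $(k\ZZ)^n$ contributes another $n$ families — but since we must also make the ``small cube'' pieces $R$-disjoint from their translates, we pay an extra multiplicative factor, landing at $m2^m$.

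\textbf{Step 1: a $k$-disjoint grid on $\ZZ^m$.} Choose a large integer $L$ (depending on $k,R,m$; something like $L>2^m(R+k)$ will do) and partition $\ZZ^m$ into the cubes $Q_v=\prod_{j=1}^m[v_jL,(v_j+1)L)\cap\ZZ$ for $v\in\ZZ^m$. I would like to take $\mathcal{U}_0$ to be a subcollection of slightly shrunk cubes, shrunk by a margin $k$ on each side, so consecutive retained cubes are $k$-apart; but then the ``boundary slabs'' of width $k$ are left uncovered in the $\ZZ^m$ direction. The standard device (as in the proof of $\asdim\ZZ^m\le m$ via the colored-slab construction) is to use $2^m$ ``shifts'': for each subset $S\subseteq\{1,\dots,m\}$ shift the grid by $\tfrac{L}{2}$ in the coordinates in $S$, getting colored slabs. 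This is exactly where the $2^m$ comes from.

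\textbf{Step 2: assemble the families.} Concretely, using the product structure $\asdim(\ZZ^m\times(k\ZZ)^n)$ and the union theorem for asymptotic dimension, one knows $\ZZ^m$ is covered by $R$-disjoint uniformly bounded $\mathcal{V}_0^{\ZZ^m},\dots,\mathcal{V}_m^{\ZZ^m}$ and $(k\ZZ)^n$ by $R$-disjoint uniformly bounded $\mathcal{W}_0,\dots,\mathcal{W}_n$; but the naive product only gives $(m+1)(n+1)$ families all of which are merely $R$-disjoint, with no $k$-disjoint family at all, and no $m2^m$ bound. So instead I would argue directly: write $\ZZ^m=A\cup B$ where $A=\bigcup(\bigcup\mathcal{U}_0')$ is a union of shrunk $L$-cubes (the ``interiors'', a $k$-disjoint uniformly bounded family on $\ZZ^m$, hence $\mathcal{U}_0$ after crossing with all of $(k\ZZ)^n$ sliced into $R$-separated... no—) — the cleanest route: let $\mathcal{U}_0=\{\,Q'\times\{y\} : Q' \text{ a shrunk } L\text{-cube in }\ZZ^m,\ y\in(k\ZZ)^n\,\}$, which is $k$-disjoint (the $\{y\}$ singletons are $k$-apart in $(k\ZZ)^n$ and the $Q'$ are $k$-apart in $\ZZ^m$) and uniformly bounded with diameter $\le L\sqrt m$. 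The remainder $X\setminus\bigcup\mathcal{U}_0$ lies in $(\text{boundary slabs of }\ZZ^m)\times(k\ZZ)^n$, a set coarsely equivalent to a disjoint (in the sense of bounded geometry) union of pieces of the form $[0,k]^{m}\times(k\ZZ)^n$ up to the slab structure; here $\asdim$ is $\le m+n$, and—being a subset of the $R$-separated slab translates—it can be covered by $\le (m+n+1)\cdot(\text{number of slab colors})$ families, each $R$-disjoint uniformly bounded, and one checks $(m+n)+$ (slab overhead) $\le m2^m$ for all $m\ge 1, n$... this is false for large $n$.

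Let me restate the honest plan, because the bound $m2^m$ must already encode $n$. The resolution is that the lemma is only claimed for the $X\times X$ of Example 3.1 application, where effectively $n$ is controlled; re-reading, $n$ is a free parameter, so the $m2^m$ families must absorb the $(k\ZZ)^n$ direction by using the \emph{same} shrunk-cube trick there. \textbf{So: Step 1'} — cover $(k\ZZ)^n$ by a single uniformly bounded $R$-disjoint... impossible unless $n=0$. Hence the genuine mechanism must be: apply Corollary/Lemma-style reasoning that $(k\ZZ)^n$ with the max metric, after rescaling by $1/k$, is isometric to $\ZZ^n$ with metric scaled, so $\asdim=n$ and it is covered by $n+1$ families $R$-disjoint uniformly bounded; then cross-product with the $\ZZ^m$ decomposition ($1$ $k$-disjoint family $+$ the slab machinery). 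The product of ``$1$ special $+ (m)$ ordinary'' with ``$(n+1)$ ordinary'' gives: the special family crossed with each of the $n+1$ gives $n+1$ many $k$-disjoint families — combine into one by interleaving (they are automatically mutually $R$-far if the $(k\ZZ)^n$-colors are $R$-separated) — and $m(n+1)$ ordinary ones, total $1+m(n+1)$, which we then crudely bound above by $m2^m$ only when... again false.

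\textbf{The main obstacle}, and where I would focus, is precisely reconciling the stated count $m2^m$ with a free parameter $n$: either $n$ must be absorbed ``for free'' using that $(k\ZZ)^n$ sits inside $\ZZ^n$ at scale $k$ and so a \emph{single} pair of families $\{[2jk,(2j+1)k]^n\}$, $\{[(2j+1)k,(2j+2)k]^n\}$ type decomposition (as used in the Proposition 3.8 proof) handles it, OR the lemma tacitly uses that in the target application $n\le$ something. I would prove it as: (i) decompose $\ZZ^m$ into $2^m$ colored-slab families plus interiors, giving after crossing with $(k\ZZ)^n$ a total of $\le m2^m$ $R$-disjoint families for the slab part by the product/union theorems, each uniformly bounded; (ii) take $\mathcal{U}_0$ = (shrunk $L$-cubes in $\ZZ^m$) $\times$ ($k$-separated $n$-cube grid in $(k\ZZ)^n$), verified $k$-disjoint and uniformly bounded; (iii) check the two parts cover $\ZZ^m\times(k\ZZ)^n$ and that the slab-part count is $\le m2^m$ using $\asdim(k\ZZ)^n\le n$ together with a careful bookkeeping of how the $2^m$ colorings interact with an $R$-disjoint cube-grid on $(k\ZZ)^n$ — this bookkeeping, making the exponential factor $2^m$ genuinely suffice, is the crux, and I expect it to rely on choosing $L$ enormous compared to $k$ so that the $(k\ZZ)^n$-direction decomposition can be carried out \emph{inside each} $L$-cube independently, localizing $n$ away.
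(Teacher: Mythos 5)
Your write-up correctly isolates the crux --- how the count $m2^m$ can be independent of $n$ --- but it does not resolve it, so what you have is a gap, not a proof. The idea you are missing is the following. Partition $\ZZ^n$ into boxes $\prod_{i=1}^n V_i$ with each $V_i$ an interval of length $R$ taken from one of the two translated grids $\mathcal{V}_0=\{[(2j-1)R,2jR)\}$, $\mathcal{V}_1=\{[2jR,(2j+1)R)\}$; grouping the boxes by their colour pattern in $\{0,1\}^n$ gives $2^n$ families $\mathcal{W}_1,\dots,\mathcal{W}_{2^n}$, each $R$-disjoint, whose union is a partition of $\ZZ^n$. The point is that a partition, restricted to $(k\ZZ)^n$, is \emph{automatically $k$-disjoint}, because distinct points of $(k\ZZ)^n$ are already at distance $\geq k$. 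So the entire $(k\ZZ)^n$ factor can be pushed into the single $k$-disjoint family $\mathcal{U}_0$ at no cost in the number of $R$-disjoint families; this is why $n$ disappears from the count. (Your attempt with singletons $\{y\}$, $y\in(k\ZZ)^n$, uses the same observation but then strands the slab remainder, which genuinely needs about $n$ more colours; the boxes must go into $\mathcal{U}_0$ together with the long intervals, not instead of them.)

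Concretely, for each colour $l\in\{1,\dots,2^n\}$ one takes a decomposition of $\ZZ$ into long intervals $\mathcal{C}_l$ (length about $2^n n(R+k)$, pairwise $k$-separated) and short gaps $\mathcal{D}_l$ (length $k$), with the \emph{phase} of the grid depending on $l$ so that $\bigcup_l \mathcal{D}_l$ is $R$-disjoint. Then $\mathcal{U}_0$ consists of all $\prod_{i=1}^m C_i\times W$ with $C_i\in\mathcal{C}_l$ and $W\in\mathcal{W}_l$ (this is $k$-disjoint and uniformly bounded), and a point outside $\bigcup\mathcal{U}_0$ has some coordinate $s\leq m$ lying in a gap $D_s\in\mathcal{D}_l$, where $l$ is determined by its $(k\ZZ)^n$-part. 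The $R$-disjoint families are indexed only by $s\in\{1,\dots,m\}$ and by the $2^m$ colourings of the remaining $m-1$ coordinates via $\mathcal{V}_0,\mathcal{V}_1$; elements coming from different colours $l\neq l'$ land in the \emph{same} family and are still $R$-separated because their $\mathcal{D}_l$- and $\mathcal{D}_{l'}$-components are. This yields exactly $m2^m$ families $\mathcal{U}_1,\dots,\mathcal{U}_{m2^m}$, which is the bookkeeping your proposal leaves open; your closing guess (taking $L$ huge so the $(k\ZZ)^n$ decomposition localizes inside each cube) is not how the paper closes it and would not by itself remove the $n$-dependence.
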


\begin{proof}
Let
\[ \mathcal{V}_0=\{[(2j-1)R,2jR):j\in \mathbb{Z}\} \text{~and~} \mathcal{V}_1=\{[2jR,(2j+1)R):j\in \mathbb{Z}\}.\]

For $l=1,...,n2^n$, let $S=R+k$ and
$$\mathcal{C}_l=\{[(2^nn(j-1)+l)S-R,(2^nnj+l)S-R-k):j\in \mathbb{Z}\}$$
$$\mathcal{D}_l=\{[(2^n nj+l)S-R-k,(2^n nj+l)S-R):j\in \mathbb{Z}\}$$
$$\mathcal{W}_l=\{\prod_{i=1}^nV_i:V_i\in \mathcal{V}_{\phi(l)_i},i\in\{1,...,n\}\}$$
in which $\phi$ is a bijection from $\{1,...,2^n\}$ to $\{0,1\}^n$.\\

Let
\[
\mathcal{U}_0=\{\prod_{i=1}^mC_i\times W:(\prod_{i=1}^mC_i,W)\in \bigcup_{l=1}^{2^n}(\mathcal{C}_l^m\times \mathcal{W}_l)\}
\]
\begin{multline*}
\mathcal{U}_{2^m(s-1)+t}=\{\prod_{i=1}^{s-1}V_i\times D_s\times \prod_{i=s+1}^{m}V_i \times W:(D_s,W)\in\bigcup_{l=1}^{2^n}(\mathcal{D}_l\times \mathcal{W}_l),\\V_i\in \mathcal{V}_{\rho(t)_i},i\in\{1,...,s-1,s+1,...,m\}\}
\end{multline*}
where $\rho$ is a bijection from $\{1,...,2^m\}$ to $\{0,1\}^m$ and $s\in\{1,...,m\}$, $t\in\{1,...,2^m\}$.

Each $\mathcal{C}_l$ is $k$-disjoint and the family $\{\mathbb{Z}^m\times W:W\in \bigcup_{l=1}^{2^n}\mathcal{W}_l\}$ is $k$-disjoint since $\bigcup_{l=1}^{2^n}\mathcal{W}_l$ is disjoint. Thus $\mathcal{U}_0$ is $k$-disjoint and uniformly bounded.

The families $\mathcal{V}_0,\mathcal{V}_1,\bigcup_{l=1}^{2^n}\mathcal{D}_l$ and $\{\mathbb{Z}^m\times W:W\in\mathcal{W}_l\}$ are $R$-disjoint. Thus $\mathcal{U}_j$ is $R$-disjoint and uniformly bounded for $j=1,2,\cdots, m2^m$.

We will prove that $\bigcup_{i=0}^{m2^m}\mathcal{U}_i$ cover $\mathbb{Z}^m\times (k\mathbb{Z})^n$.

Indeed, let $(x_i)_{i=1}^{m+n}\in(\mathbb{Z}^m\times (k\mathbb{Z})^n)\setminus \bigcup\mathcal{U}_0$, there exist $l\in \{1,...,2^n\}$ and $W\in \mathcal{W}_l$ such that $(x_i)_{i=m+1}^{m+n}\in W$. Since $(x_i)\notin \bigcup\mathcal{U}_0$, we have $(x_i)_{i=1}^m\notin \bigcup\{\prod_{i=1}^mC_i;C_i\in\mathcal{C}_l\}$. Then $\exists~ s\in\{1,...,m\}$ such that $x_s\notin \bigcup\mathcal{C}_l$. Since $\bigcup(\mathcal{C}_l\bigcup\mathcal{D}_l)=\mathbb{Z}$, there exists $D_s\in \mathcal{D}_l$ such that $x_s\in D_s$. Since $\bigcup(\mathcal{V}_0\bigcup\mathcal{V}_1)=\mathbb{Z}$, we may take $t\in\{1,...,2^m\}$ such that $(x_i)_{i=1}^m\in \prod_{i=1}^mV_i$ for some $V_i\in\mathcal{V}_{\phi(t)_i},i\in\{1,2,...,m\}$. Then we have
$$(x_i)_{i=1}^{m+n}\in\prod_{i=1}^{s-1}V_i\times D_s\times \prod_{i=s+1}^{m}V_i \times W\in\mathcal{U}_{2^m(s-1)+t}$$

\end{proof}

\begin{lem}\rm(\cite{Hur})
\label{lem-union-asdim}
Let $A$ and $B$ be subsets of a metric space $X$, then asdim $A\bigcup B\leq\text{max}\{\text{asdim}A,\text{asdim}B\}$.
\end{lem}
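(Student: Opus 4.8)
The plan is to establish this standard finite union theorem for asymptotic dimension via the $r$-saturated union of Lemma \ref{r-saturated}. Write $n=\max\{\text{asdim}\,A,\text{asdim}\,B\}$; if $n=\infty$ there is nothing to prove, so assume $n<\infty$, i.e. asdim $A\leq n$ and asdim $B\leq n$. Fix $r>0$; the goal is to produce uniformly bounded $r$-disjoint families $\mathcal{W}_0,\dots,\mathcal{W}_n$ of subsets of $X$ whose union covers $A\cup B$.

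First I would invoke asdim $B\leq n$ to obtain uniformly bounded, $r$-disjoint families $\mathcal{B}_0,\dots,\mathcal{B}_n$ of subsets of $B$ that cover $B$; enlarging the common bound if necessary we may assume every $\mathcal{B}_i$ is $D$-bounded for a single $D\geq r$. Then I would invoke asdim $A\leq n$, but with the disjointness parameter $5D$ in place of $r$, to obtain uniformly bounded, $5D$-disjoint families $\mathcal{A}_0,\dots,\mathcal{A}_n$ of subsets of $A$ that cover $A$.

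Now set $\mathcal{W}_i=\mathcal{A}_i\cup_r\mathcal{B}_i$ for $i=0,\dots,n$. Since each $\mathcal{B}_i$ is $r$-disjoint and $D$-bounded with $D\geq r$, and each $\mathcal{A}_i$ is $5D$-disjoint and uniformly bounded, Lemma \ref{r-saturated} (applied with $\mathcal{U}=\mathcal{B}_i$ and $\mathcal{V}=\mathcal{A}_i$) shows that every $\mathcal{W}_i$ is $r$-disjoint and uniformly bounded. It then remains only to check that $\bigcup_{i=0}^n\mathcal{W}_i$ covers $A\cup B$: every $V\in\mathcal{A}_i$ is contained in $N_r(V;\mathcal{B}_i)\in\mathcal{W}_i$, so all of $A$ is covered; and any $U\in\mathcal{B}_i$ is either absorbed into some $N_r(V;\mathcal{B}_i)$ (when $d(U,V)\leq r$ for a $V\in\mathcal{A}_i$) or else belongs to $\mathcal{W}_i$ directly (when $d(U,\mathcal{A}_i)>r$), so all of $B$ is covered as well. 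Hence $\bigcup_{i=0}^n\mathcal{W}_i$ covers $A\cup B$, and since $r>0$ was arbitrary, asdim $(A\cup B)\leq n$.

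This argument has no genuinely hard step. The only points requiring a little care are: arranging $D\geq r$ so that the hypothesis $R\geq r$ of Lemma \ref{r-saturated} is met; taking the auxiliary disjointness parameter for the $\mathcal{A}_i$ to be exactly five times the bound $D$ on the $\mathcal{B}_i$, as Lemma \ref{r-saturated} demands; and the bookkeeping showing that no set is lost from the cover when passing to the $r$-saturated union, in particular that a $U\in\mathcal{B}_i$ with $d(U,\mathcal{A}_i)\leq r$ is swallowed by the enlarged member $N_r(V;\mathcal{B}_i)$. Since the statement is classical (as the citation indicates), one could alternatively just cite it, but the proof above is short and self-contained given Lemma \ref{r-saturated}.
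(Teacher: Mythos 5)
Your proof is correct: the application of Lemma \ref{r-saturated} with $\mathcal{U}=\mathcal{B}_i$ ($r$-disjoint, $D$-bounded, $D\geq r$) and $\mathcal{V}=\mathcal{A}_i$ ($5D$-disjoint) is exactly what that lemma requires, and the covering check is complete. The paper itself gives no proof of this lemma, only the citation to \cite{Hur}; your argument is the standard saturated-union proof from that source, built from the same Lemma \ref{r-saturated} the paper already quotes, so there is nothing to compare beyond noting agreement.
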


\begin{lem}\rm(\cite{Bell2011})
\label{lem-product-asdim}
Let $X$ and $Y$ be  metric spaces, then asdim $X\times Y\leq\text{asdim}X+\text{asdim}Y$.
\end{lem}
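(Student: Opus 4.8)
The plan is first to dispose of the trivial case: if $\mathrm{asdim}\,X=\infty$ or $\mathrm{asdim}\,Y=\infty$ the right‑hand side is $\infty$ and there is nothing to prove, so assume $\mathrm{asdim}\,X=p$ and $\mathrm{asdim}\,Y=q$ are finite. I would then trade the $r$‑disjoint‑families definition for the equivalent reformulation of finite asymptotic dimension in terms of controlled maps to finite‑dimensional uniform simplicial complexes: $\mathrm{asdim}\,Z\le k$ if and only if for every $\varepsilon>0$ there exist a uniform (bounded‑geometry) simplicial complex $P$ with $\dim P\le k$ and an $\varepsilon$‑Lipschitz map $f\colon Z\to P$ such that the preimages $f^{-1}(\mathrm{St}\,v)$ of the open stars of vertices $v$ of $P$ have uniformly bounded diameter. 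One direction is the barycentric‑coordinate map into the nerve of a uniformly bounded cover of bounded $d$‑multiplicity, the other is taking preimages of open stars; I would cite this standard equivalence, or recall it in a line.

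Next I would fix $\varepsilon>0$, choose such maps $f\colon X\to K$ with $\dim K\le p$ and $g\colon Y\to L$ with $\dim L\le q$, and form $f\times g\colon X\times Y\to K\times L$. With the maximum metric on $X\times Y$ and the corresponding product metric on $K\times L$, the product map is again $\varepsilon$‑Lipschitz, and each of its star‑preimages is a product of two uniformly bounded sets, hence uniformly bounded. The one genuinely non‑formal point is that $|K\times L|$ is a polyhedron of dimension $\le p+q$ but not a priori a simplicial complex of that dimension, so I would post‑compose with a subdivision $|K\times L|\to K'$ onto a uniform simplicial complex $K'$ triangulating the same polyhedron with $\dim K'=\dim|K\times L|\le p+q$; this rests on the standard PL fact that a product of simplices, hence of finite‑dimensional complexes, admits a triangulation whose dimension is the sum of the dimensions. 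Since $K$ and $L$ are uniform, $K'$ can be taken uniform and the subdivision map bi‑Lipschitz with constants depending only on $p,q$, not on $\varepsilon$. Thus $X\times Y\to K'$ is $(C\varepsilon)$‑Lipschitz with uniformly bounded star‑preimages into a complex of dimension $\le p+q$; letting $\varepsilon\to 0$ and invoking the equivalence once more yields $\mathrm{asdim}(X\times Y)\le p+q$ (the passage from a bounded‑multiplicity cover back to $r$‑disjoint families being the usual recoloring argument, in the spirit of Lemma~\ref{r-saturated}).

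The main obstacle, and the only place real content enters, is exactly this economical subdivision of the product cells. The naive product cover $\{U\times V:U\in\mathcal U_i,\ V\in\mathcal V_j\}$ assembled from $r$‑disjoint families $\mathcal U_0,\dots,\mathcal U_p$ of $X$ and $\mathcal V_0,\dots,\mathcal V_q$ of $Y$ only has $r$‑multiplicity bounded by $(p+1)(q+1)$, which would give the far weaker estimate $\mathrm{asdim}(X\times Y)\le(p+1)(q+1)-1$; the improvement down to $p+q$ comes entirely from re‑triangulating $\Delta^p\times\Delta^q$ without raising the dimension. The remaining ingredients --- the trivial case, preservation of Lipschitz constants, coboundedness and uniform boundedness under products with the maximum metric, and the dictionary between the complex picture and the $r$‑disjoint‑families picture --- are routine bookkeeping.
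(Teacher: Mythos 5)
The paper offers no proof of this lemma --- it is quoted verbatim from \cite{Bell2011} --- and your argument is precisely the standard one from that source: pass to $\varepsilon$-Lipschitz uniformly cobounded maps onto uniform simplicial complexes, take the product map, and retriangulate $K\times L$ as a complex of dimension $\le p+q$. The sketch is correct, and you rightly isolate the one non-formal step (the dimension of the triangulated product of simplices, which is what beats the naive $(p+1)(q+1)$ multiplicity bound).
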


\begin{prop}
\label{new2}
Let $X=\bigcup_{i=1}^{\infty}(2^i\mathbb{Z})^i$ be the metric space in Example 3.1, then trasdim$(X\times X)=\omega$.
\end{prop}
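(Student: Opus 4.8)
The plan is to prove the two inequalities $\mathrm{trasdim}(X\times X)\ge\omega$ and $\mathrm{trasdim}(X\times X)\le\omega$ separately. For the lower bound, note that the subspace $(2^m\ZZ)^m\times(2^m\ZZ)^m$ carries exactly the maximum metric on $(2^m\ZZ)^{2m}$, which is bi-Lipschitz equivalent to $\ZZ^{2m}$, so asdim$(X\times X)\ge 2m$ for every $m$; hence asdim$(X\times X)=\infty$ and, by the Remark in Section 2 ($\mathrm{trasdim}\le n$ iff asdim $\le n$), $\mathrm{trasdim}(X\times X)\ge\omega$. For the reverse inequality it suffices, by the definition of trasdim (equivalently, the $k=0$ case of Proposition 2.1), to show: for every $n$ there is $m(n)\in\NN$ so that for every $d>0$ the space $X\times X$ is covered by one $n$-disjoint uniformly bounded family together with $m(n)$ $d$-disjoint uniformly bounded families, where $m(n)$ is independent of $d$ (and $m(n)\to\infty$). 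Fix $n$ and, without loss of generality, an integer $d>n$, and write a point as $(a,b)$ with $a\in(2^{l_a}\ZZ)^{l_a}$, $b\in(2^{l_b}\ZZ)^{l_b}$.

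Partition the pieces of $X\times X$ into three subspaces $B=\{l_a>n,\ l_b>n\}$, $A^{(1)}=\{l_a\le n\}$ and $A^{(2)}=\{l_a>n,\ l_b\le n\}$. Since the metric on $X$ inserts the constant $l+(l+1)+\cdots+(l'-1)$ between an index-$l$ block and an index-$l'$ block, and this constant is $\ge n$ whenever $l\le n\le l'$, a direct check shows that $B,A^{(1)},A^{(2)}$ are pairwise at distance $\ge n$ from one another; this will let us amalgamate the $n$-disjoint families produced on the three parts into a single one. On $B$ any two distinct points are at distance $>n$ (the lattice spacing inside a block is $\ge 2^{n+1}$, the additive constant across blocks exceeds $n$), so the family of all singletons of $B$ is already an $n$-disjoint uniformly bounded cover of $B$.

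For $A^{(1)}$ split further according to $l_b$: the \emph{tail} $\{l_b\ge d\}$, the \emph{middle} $\{n\le l_b<d\}$, and the \emph{base} $\{l_b<n\}$. On the tail, if $l_b,l_b'\ge d$ then distinct blocks are $\ge\min(l_b,l_b')\ge d$ apart and the spacing inside a block is $\ge 2^d\ge d$; so, choosing $d$-disjoint uniformly bounded families $\mathcal{W}^0,\dots,\mathcal{W}^{n+1}$ covering the finite-asymptotic-dimensional space $X_{\le n}=\bigcup_{i\le n}(2^i\ZZ)^i$ (which embeds isometrically in $\ZZ^n\times[0,\binom{n}{2}]$), the families $\{\,W\times\{y\}:W\in\mathcal{W}^l\,\}$, taken over all $y$ of index $\ge d$, are $d$-disjoint, uniformly bounded, and cover the tail --- that is $n+2$ families. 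The base, a product of two finite-asymptotic-dimensional spaces, is covered by $O(n)$ $d$-disjoint uniformly bounded families. For the middle, $l_b\ge n$ forces $2^{l_b}\ZZ\subseteq 2^n\ZZ$, so the middle embeds isometrically into $\ZZ^{\,n+2}\times(2^n\ZZ)^{d-1}$ (pack $\ZZ^n$ and the two ``height'' coordinates $\binom{l_a}{2},\binom{l_b}{2}$ into $\ZZ^{n+2}$, and the middle blocks into $(2^n\ZZ)^{d-1}$); applying Lemma \ref{lem1fornew2} with $m=n+2$, $k=2^n$, $R=d$ yields one $2^n$-disjoint (hence $n$-disjoint) uniformly bounded family and $(n+2)2^{n+2}$ $d$-disjoint uniformly bounded families covering the middle. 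Handling $A^{(2)}$ in exactly the same way (with $2^{n+1}$ in place of $2^n$, and no base since $l_a>n$), and then amalgamating the three $n$-disjoint families from $B,A^{(1)},A^{(2)}$ (legitimate because these sets are pairwise $\ge n$ apart) while keeping all the $d$-disjoint families separate, produces the required cover with $m(n)=O(n2^{n})$, independent of $d$ and tending to $\infty$. This gives $\mathrm{trasdim}(X\times X)\le\omega$, hence $\mathrm{trasdim}(X\times X)=\omega$.

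The crux, which dictates this decomposition, is that every obvious slicing of $X\times X$ produces pieces whose asymptotic dimension --- and so the number of $d$-disjoint families --- grows with $d$. Two devices defeat this. First, pushing the tail out to index $\ge d$ makes the \emph{index spacing itself} supply $d$-disjointness, at a cost of only $O(n)$ families. Second, on the now-bounded range of ``middle'' indices the exponential growth $2^i$ lets Lemma \ref{lem1fornew2} cover the isometric model $\ZZ^{\,n+2}\times(2^n\ZZ)^{d-1}$ with the count $(n+2)2^{n+2}$ governed only by the bounded Euclidean factor. The one bookkeeping subtlety to be careful about is that only a \emph{single} $n$-disjoint family is allowed, which is exactly why $B,A^{(1)},A^{(2)}$ must be arranged to be pairwise $\ge n$-separated so that their $n$-disjoint families can be merged.
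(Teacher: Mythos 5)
Your proposal is correct and follows essentially the same route as the paper: the same decomposition of $X\times X$ according to whether each index is small ($\le n$), middle (between $n$ and $d$), or large ($\ge d$), singletons on the both-large piece, the isometric embedding of the small-times-middle pieces into $\mathbb{Z}^{n+2}\times(2^n\mathbb{Z})^{d-1}$ handled by Lemma \ref{lem1fornew2}, and the merging of the $n$-disjoint families using the pairwise $n$-separation of the relevant pieces (the paper's $Y_1,Y_5,Y_6$). The only cosmetic difference is that you organize the six pieces as three sets with sub-cases and you make the lower bound $\mathrm{trasdim}(X\times X)\ge\omega$ explicit, which the paper leaves implicit.
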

\begin{proof}
By Proposition \ref{{iffforomega}}, it suffices for us to show for every $~k\in\NN$, there exists~$m=m(k)\in\NN,$ such that for every~ $n\in\NN$, there are uniformly bounded families $\mathcal{V}_0, \mathcal{V}_1, ...,\mathcal{V}_{m}$ satisfying $\mathcal{V}_0$ is $k$-disjoint, $\mathcal{V}_i$ is $n$-disjoint for $i=1,2,\cdots m$ and
$\bigcup_{i=0}^{m}\mathcal{V}_i$ covers $X\times X$. Moreover, $m\rightarrow \infty$ as $k\rightarrow \infty$.
Let $X\times X=Y_1\bigsqcup Y_2\bigsqcup Y_3\bigsqcup Y_4\bigsqcup Y_5\bigsqcup Y_6$ where
$$Y_1=\bigcup_{i=k+1}^{\infty}(2^i\mathbb{Z})^i\times \bigcup_{i=k+1}^{\infty}(2^i\mathbb{Z})^i,Y_2=\bigcup_{i=1}^k(2^i\mathbb{Z})^i\times \bigcup_{i=1}^k(2^i\mathbb{Z})^i,Y_3=\bigcup_{i=1}^k(2^i\mathbb{Z})^i\times \bigcup_{i=n+1}^{\infty}(2^i\mathbb{Z})^i,$$
$$Y_4=\bigcup_{i=n+1}^{\infty}(2^i\mathbb{Z})^i\times \bigcup_{i=1}^k(2^i\mathbb{Z})^i,Y_5=\bigcup_{i=1}^k(2^i\mathbb{Z})^i\times \bigcup_{i=k+1}^n(2^i\mathbb{Z})^iY_6=\bigcup_{i=k+1}^n(2^i\mathbb{Z})^i\times \bigcup_{i=1}^k(2^i\mathbb{Z})^i.$$

Let $\mathcal{U}_0^1=\{\{(x,y)\}:x,y\in\bigcup_{i=k+1}^{\infty}(2^{i}\mathbb{Z})^i\}$, then $\mathcal{U}_0^1$ is a $k$-disjoint uniformly bounded family which covers $Y_1$.

Let $X_1=\bigcup_{i=1}^k(2^{i}\mathbb{Z})^i$, since asdim$(2^{i}\mathbb{Z})^i=i$ for each $i\in\NN$, asdim$X_1=k$
by Lemma \ref{lem-union-asdim}. Then we obtain that asdim$Y_2=$asdim$(X_1\times X_1)\leq2k$ by Lemma \ref{lem-product-asdim}.
Therefore,
there exist $n$-disjoint and uniformly bounded families
$\mathcal{U}^2_0,...,\mathcal{U}^2_{2k}$ such that $\bigcup_{i=0}^{2k}\mathcal{U}^2_i$ cover $Y_2$.

Since asdim$X_1=k$, there exist $n$-disjoint and uniformly bounded families
$\mathcal{U}_0,...,\mathcal{U}_{k}$ such that $\bigcup_{i=0}^{k}\mathcal{U}_i$ cover $X_1$.
Let $\mathcal{U}_i^3=\{ U\times\{x\}:U\in \mathcal{U}_i,x\in\bigcup_{i=n+1}^{\infty}(2^{i}\mathbb{Z})^i\}$ for $i=0,...,k$, then $\mathcal{U}_i^3$ is $n$-disjoint uniformly bounded and $\bigcup_{i=0}^{k}\mathcal{U}_i^3$ covers $Y_3= X_1\times \bigcup_{i=n+1}^{\infty}(2^{i}\mathbb{Z})^i$.

Let $\mathcal{U}_i^4=\{ \{x\}\times U:U\in \mathcal{U}_i,x\in\bigcup_{i=n+1}^{\infty}(2^{i}\mathbb{Z})^i\}$ for $i=0,...,k$, then $\mathcal{U}_i^4$ is $n$-disjoint uniformly bounded and $\bigcup_{i=0}^{k}\mathcal{U}_i^4$ covers $Y_4=  (\bigcup_{i=n+1}^{\infty}(2^{i}\mathbb{Z})^i)\times X_1$.

There is an isometric map $\varphi:X_1\rightarrow (\mathbb{Z}^{k+1},d_{\text{max}})$ defined by
\[
 \forall x=(x_{1},x_{2},\cdots,x_{i})\in(2^{i}\mathbb{Z})^i,~\varphi(x)=(x_{1},x_{2},\cdots,x_{i},0,\cdots,0,\frac{i(i-1)}{2})\in\mathbb{Z}^{k+1}.
\]

and an isometric map $\phi:\bigcup_{i=k+1}^n(2^i\ZZ)^i \rightarrow ((2^k\mathbb{Z})^{n}\times\ZZ,d_{\text{max}})$ defined by
\[
 \forall x=(x_{1},x_{2},\cdots,x_{i})\in(2^{i}\mathbb{Z})^i,~\varphi(x)=(x_{1},x_{2},\cdots,x_{i},0,\cdots,0,\frac{i(i-1)}{2}).
\]
So $Y_3$ can be isometric embedded into $\mathbb{Z}^{k+2}\times (2^{k}\mathbb{Z})^{n}$. By Lemma \ref{lem1fornew2}, there are $k$-disjoint uniformly bounded family $\mathcal{V}_0$ and $n$-disjoint uniformly bounded families $\mathcal{V}_j$ for $j=1,...,2^{k+2}(k+2)$ such that $\bigcup_{j=0}^{2^{k+1}(k+1)}\mathcal{V}_j$ covers $\mathbb{Z}^{k+2}\times (2^{k}\mathbb{Z})^{n}$. Therefore, there are $k$-disjoint uniformly bounded family $\mathcal{U}_0^5$ and $n$-disjoint uniformly bounded families $\mathcal{U}_j^5$ for $j=1,...,2^{k+2}(k+2)$ such that $\bigcup_{j=0}^{2^{k+2}(k+2))}\mathcal{U}_j^5$ cover $Y_5$.

By the similar argument, there are $k$-disjoint uniformly bounded family $\mathcal{U}_0^6$ and $n$-disjoint uniformly bounded families $\mathcal{U}_j^6$ for $j=1,...,2^{k+2}(k+2)$ such that $\bigcup_{j=0}^{2^{k+2}(k+2)}\mathcal{U}_j^6$ cover $Y_6$.

Since $Y_1,Y_5,Y_6$ are $k$-disjoint, we obtain $k$-disjoint uniformly bounded family $\mathcal{U}_0=\mathcal{U}^1_0\bigcup\mathcal{U}^5_0\bigcup\mathcal{U}^6_0$ and $n$-disjoint uniformly bounded families
$$\mathcal{U}^2_j(j=0,...,2k),\mathcal{U}^3_j(j=0,...,k),\mathcal{U}^4_j(j=0,...,k),\mathcal{U}^5_j(j=1,...,2^{k+2}(k+2)),\mathcal{U}^6_j(j=1,...,2^{k+2}(k+2))$$
 s.t. $(\bigcup_{i,j}\mathcal{U}_{j}^i)\bigcup\mathcal{U}_0$ covers $X\times X$.
 \end{proof}

\begin{remark}
Theorem 2 in \cite{Radul2010} and Proposition \ref{new2} show that the space $X$ satisfies trasdim$(X)=\omega$ and trasdim$(X\times X)=\omega$. In fact, T. Yamauchi also gives us another space $\prod_{i=1}^{\infty}i\mathbb{Z}$ with the same property. By the same argument in \cite{Yama2015}, we know trasdim$(\prod_{i=1}^{\infty}i\mathbb{Z})=\omega$ and the space $\mathbb{Z}\times\mathbb{Z}\times2\mathbb{Z}\times2\mathbb{Z}\times3\mathbb{Z}\times3\mathbb{Z}\times ...$, with the same metric as $\prod_{i=1}^{\infty}i\mathbb{Z}$, has trasdim$(\mathbb{Z}\times\mathbb{Z}\times2\mathbb{Z}\times2\mathbb{Z}\times3\mathbb{Z}\times3\mathbb{Z}\times ...)=\omega$.
Since there is a isometric 'shift' bijection
$$\theta:\mathbb{Z}\times2\mathbb{Z}\times3\mathbb{Z}\times...\times \mathbb{Z}\times2\mathbb{Z}\times3\mathbb{Z}\times ...\to\mathbb{Z}\times\mathbb{Z}\times2\mathbb{Z}\times2\mathbb{Z}\times3\mathbb{Z}\times3\mathbb{Z}\times ...$$
defined by $\theta((x_i)\times(y_i))=(x_1,y_1,x_2,y_2,...)$, so trasdim$(\prod_{i=1}^{\infty}i\mathbb{Z}\times \prod_{i=1}^{\infty}i\mathbb{Z})=\omega$.\\
\end{remark}

\end{section}

\begin{section}{Transfinite asymptotic dimensions and shift union}\

In the present section we use the technique in \cite{Yama2015} to give a metric space $X$ with its transfinite asymptotic dimension $\leq\omega+1$ and we will give a negative answer to the Question 7.1 raised in \cite{BellNag2017}.

Let $d_{1}$ be the metric on $(\bigoplus_{i\in\mathbb{Z}}\mathbb{Z})\times\mathbb{Z}$ defined as following:
 \[\forall~x=((x_{i})_{i\in\mathbb{Z}},a), y=((y_{i})_{i\in\mathbb{Z}},b)\in(\bigoplus_{i\in\mathbb{Z}}\mathbb{Z})\times\mathbb{Z},
d_{1}(x,y)=\sum_{i\in\mathbb{Z}}|x_{i}-y_{i}|+|b-a|.
\]
Let
\[
X_{j}=\{((x_{i})_{i\in\mathbb{Z}},j)\in(\bigoplus_{i\in\mathbb{Z}}\mathbb{Z})\times\mathbb{Z}~~|~~
\begin{aligned}
&\text{~if~}i<j, \text{~then~} x_{i}=0; \\
&\text{~if~}i\geq j, \text{~then~}x_{i}\in(i-j+1)\mathbb{Z}
\end{aligned}
~\}
\]
and sh$\bigcup\bigoplus_{i=1}^{\infty}i\mathbb{Z}=\bigcup_{j\in\mathbb{Z}}X_j\subseteq((\bigoplus_{i\in\mathbb{Z}}\mathbb{Z})\times\mathbb{Z},d_{1})$. We call the metric space sh$\bigcup\bigoplus_{i=1}^{\infty}i\mathbb{Z}$ the \emph{shift union} of $\bigoplus_{i=1}^{\infty}i\mathbb{Z}$.

\begin{prop}
Let $X$ be the shift union sh$\bigcup\bigoplus_{i=1}^{\infty}i\mathbb{Z}$, then trasdim$X\leq\omega+1$.
\end{prop}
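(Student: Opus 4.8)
The plan is to verify the characterisation in Proposition~2.1 for $k=1$, condition (3): it suffices to produce, for every $n\in\NN$, an integer $m(n)$ with $m(n)\to\infty$, such that for \emph{every} $d>0$ the space $X$ is covered by uniformly bounded families $\mathcal U_{-1},\mathcal U_{0}$ that are $n$-disjoint and uniformly bounded families $\mathcal U_{1},\dots,\mathcal U_{m(n)}$ that are $d$-disjoint. (Proposition~2.1 applies since $X\supseteq X_{0}$, which is isometric to $\bigoplus_{i=1}^{\infty}i\ZZ$ and hence has $\text{asdim}=\infty$.) Two facts are used repeatedly: each $X_{j}$ is isometric to $\bigoplus_{k=1}^{\infty}k\ZZ$ via the reindexing $i\mapsto k=i-j+1$ (so the coordinate of ``spacing'' $k$ in $X_{j}$ sits at position $j+k-1$), and $\text{trasdim}\big(\bigoplus_{k=1}^{\infty}k\ZZ\big)=\omega$ (this is the space of \cite{Yama2015}; cf.\ the discussion at the end of Section~4). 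It is also convenient to record the exact metric: for $x\in X_{j}$ and $x'\in X_{j+\delta}$ with reindexed coordinates $(y_{k})_{k\ge1}$, $(y'_{k})_{k\ge1}$ and $\delta\ge0$, one has $d_{1}(x,x')=\delta+\sum_{k=1}^{\delta}|y_{k}|+\sum_{k\ge1}|y_{k+\delta}-y'_{k}|$.

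The guiding idea is to fibre $X$ over the line $\ZZ$ by the $1$-Lipschitz map $\pi\colon X\to\ZZ$ sending $X_{j}$ to $j$: the fibres are the $X_{j}$, each of transfinite asymptotic dimension $\omega$ uniformly, and $\text{trasdim}\,\ZZ=1$, so morally $\text{trasdim}\,X\le\omega+1$. Concretely I would fix $n$, choose a width $\ell=\ell(n)$ (not depending on $d$), cover $\ZZ$ by two $\ell$-disjoint families of length-$\ell$ intervals $\mathcal I_{0},\mathcal I_{1}$, and for an interval $I$ set $S_{I}=\pi^{-1}(I)=\bigcup_{j\in I}X_{j}$. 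The key lemma to establish is that the ``slabs'' $S_{I}$ have $\text{trasdim}\le\omega$ \emph{uniformly in $I$}: there is $M=M(n)$, independent of $d$ and of $|I|$, so that each $S_{I}$ is covered by one $n$-disjoint and $M$ many $d$-disjoint uniformly bounded families. Granting this, one amalgamates over $I\in\mathcal I_{0}$ (whose slabs are $\ell$-separated) and over $I\in\mathcal I_{1}$, gluing the resulting families to the $n$-disjoint ones by $d$-saturated unions (Lemma~\ref{r-saturated}) in the manner of the proof of Theorem~\ref{finiteunion}; since $\mathcal I_{0}\cup\mathcal I_{1}$ covers $\ZZ$ this yields a cover of $X$ with $m(n)=O(M(n))$ many $d$-disjoint families, and $m(n)\to\infty$ because $\bigoplus_{k}k\ZZ$ has infinite asymptotic dimension, so its $m(\cdot)$ coming from Proposition~2.1 is already unbounded.

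For the slab lemma one exploits the coordinate structure inside each $X_{j}$: the coordinates of spacing $\ge n$ form a subcopy on which any two distinct configurations are at distance $\ge n$, so the ``high'' directions are $n$-discrete fibrewise and are absorbed into the $n$-disjoint families; the ``low'' directions (spacings $1,\dots,n-1$), an isometric copy of $\ZZ\times2\ZZ\times\dots\times(n-1)\ZZ$, together with the bounded index set $I$, form a finite-multiplicity object that one covers by $d$-disjoint families built from products of staggered interval families, exactly in the style of Lemma~\ref{lem1fornew2} and of the proof that $\text{trasdim}\big(\bigcup_{i}((2^{i}\ZZ)^{i}\times\ZZ)\big)=\omega$. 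The subtle point is that within a slab the copies $X_{j}$ and $X_{j+1}$ are only $1$-separated, so the interval decompositions attached to different copies must be translated by copy-dependent offsets (an addressing function in the spirit of the function $p$ used in Section~3), chosen so that after amalgamation the families are genuinely $d$-disjoint; the disjointness verifications are carried out with the metric formula above.

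I expect the main obstacle to be precisely the uniform-in-width slab lemma. The natural ``alignment'' maps $S_{I}\to\bigoplus_{k}k\ZZ$ fail to be coarse embeddings because of the shift-plus-rescaling twist in the metric: a configuration in $X_{j}$ and the ``same'' configuration in $X_{j+1}$ occupy disjoint supports, hence lie far apart, while the one-index shift that would realise the comparison need not respect the divisibility constraints $y_{k}\in k\ZZ$. Consequently the slab lemma cannot be obtained in one stroke from coarse invariance of $\text{trasdim}$ applied to $\bigoplus_{k}k\ZZ$; it must be proved by a direct construction that re-runs the argument of \cite{Yama2015} while carrying the extra $\ZZ$-index and the offsets along, and checking that the number of families produced is genuinely independent of $|I|$ and of $d$. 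Once the parameters $n$, $\ell(n)$, $d$, the buffer sizes in the saturated unions, and the offsets are pinned down, the remaining estimates are bookkeeping.
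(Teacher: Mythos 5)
Your overall architecture coincides with the paper's: apply Proposition 2.1 with $k=1$, group the copies $X_j$ into blocks of width $2k$ along the index line so as to obtain two $k$-separated families of ``slabs'', show that each slab carries one $n$-disjoint uniformly bounded family together with boundedly many (independently of $d$ and of the slab) $d$-disjoint uniformly bounded families by re-running the construction of Lemma 4.1, and then amalgamate over the blocks. You also correctly diagnose that coarse invariance cannot be applied to the slabs because the natural alignment maps onto $\bigoplus_k k\ZZ$ are not coarse embeddings. But the proposal stops exactly where the proof begins: the ``uniform-in-width slab lemma'' that you yourself flag as the main obstacle \emph{is} the entire content of the argument, and you neither construct the covering families nor verify a single disjointness estimate. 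As written this is a plan, not a proof.

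Moreover, your anticipated route to the slab lemma --- copy-dependent offsets or an addressing function in the spirit of the function $p$ from Section 3, introduced to separate the $1$-separated copies $X_j$ and $X_{j+1}$ inside a slab --- points in the wrong direction and indicates that the key device is missing. The paper never separates the copies within a block. Instead it replaces the slab $Y_n=\bigcup_{j=2nk}^{2nk+2k-1}X_j$ by a single superset $Z_n$ obtained by \emph{relaxing} the divisibility constraints: in $Z_n$ the coordinates in positions $2nk,\dots,2nk+3k-1$ are allowed to be arbitrary integers (so they play the role of the $\ZZ^m$-factor of Lemma 4.1 with $m=3k$, a count independent of $d$), the next $m$ coordinates already carry spacing $>k$ and are handled by the disjoint interval products $\mathcal{W}_l$, and the tail coordinates have spacing $>k+m$ and are treated as discrete. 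A single run of the Lemma 4.1 construction on $Z_n$ then covers all $2k$ copies simultaneously by one $k$-disjoint family and $(3k)2^{3k}$ many $m$-disjoint families; no offsets and no comparison between $X_j$ and $X_{j+1}$ ever enter. The amalgamation across blocks is then done by direct union rather than by saturated unions, with the $m$-disjointness across distinct blocks checked from the fact that the gaps $D_s$ avoid the interval $[-m,m]$ while the corresponding coordinate vanishes identically in every other block. Without the $Z_n$ device, or an equally explicit substitute, the slab lemma --- and hence the proposition --- is not established by your argument.
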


\begin{proof}

By Proposition \ref{{iffforomega}}, it suffices to show that for every $k\in\NN$, there exists~ $p=p(k)\in\NN,$ such that for every~ $m\in\NN$, there are uniformly bounded families $\mathcal{U}_0, \mathcal{U}_1, ...,\mathcal{U}_{p}$ satisfying $\mathcal{U}_0,\mathcal{U}_1$ are $k$-disjoint, $\mathcal{U}_i$ is $m$-disjoint for $i=2,\cdots, p$ and
$\bigcup_{i=0}^{p}\mathcal{U}_i$ covers $X$.

Let $Y_{n}=\bigcup_{i=2nk}^{2nk+2k-1}X_{i}$, then $X=\bigcup_{n\in\mathbb{Z}}Y_{n}$. It is easy to see that
$$ Y_{n}\subseteq Z_n\triangleq\{((x_{i})_{i\in\mathbb{Z}},a)~~|~~
\begin{aligned}
&a\in[2nk, 2nk+2k-1]\cap\mathbb{Z}; \\
&x_{i}=0 \qquad \text{if } i<2nk; \\
&x_{i}\in\mathbb{Z} \qquad \text{if } i\in[2nk,2nk+2k-1];\\
&x_{i}\in\bigcup_{j=2nk}^{2nk+2k-1}(i-j+1)\mathbb{Z} \qquad \text{if } i\geq2nk+2k.
\end{aligned}
~\}\subseteq(\bigoplus_{i\in\mathbb{Z}}\mathbb{Z})\times\mathbb{Z}.
$$
 Note that the family $\{Z_{2n}|~n\in\mathbb{Z}\}$ is $k$-disjoint.

Indeed, for every $n_{1},n_{2}\in\mathbb{Z}$ and $n_{1}<n_{2}$, $\forall~x=((x_{i})_{i\in\mathbb{Z}},a)\in Z_{2n_{1}}, y=((y_{i})_{i\in\mathbb{Z}},b)\in Z_{2n_{2}}$,
then $a\in[4n_{1}k, 4n_{1}k+2k-1]$ and $b\in[4n_{2}k, 4n_{2}k+2k-1]$. It follows that $$d(x,y)\geq d(a,b)=4n_{2}k-4n_{1}k-2k+1\geq 4k-2k+1>k.$$
Similarly, we obtain that the family $\{Z_{2n+1}|~n\in\mathbb{Z}\}$ is $k$-disjoint.

Let
$$\mathcal{V}_0=\{[(2n-1)m,2nm):n\in\mathbb{Z}\}\text{ and }\mathcal{V}_1=\{[2nm,(2n+1)m):n\in\mathbb{Z}\},$$
then $\mathcal{V}_0,\mathcal{V}_1$ are $m$-disjoint and $\mathcal{V}_0\cup\mathcal{V}_1$ covers $\mathbb{Z}$.

Let $S=k+m$, for $l=1,...,2^m$, let
$$\mathcal{C}_l=\{[(2^m(n-1)+l)2S-m,(2^mn+l)2S-m-k):n\in\mathbb{Z}\},$$
$$\mathcal{D}_l=\{[(2^mn+l)2S-m-k,(2^mn+l)2S-m):n\in\mathbb{Z}\},$$ and
$$\mathcal{W}_l=\{\prod_{i=1}^mV_i~|~V_i\in \mathcal{V}_{\phi(l)_i},i\in\{1,...,m\}\}$$
where $\phi$ is a bijection from $\{1,...,2^m\}$ to $\{0,1\}^m$.

Note that $\mathcal{C}_l$ is $k$-disjoint, $\mathcal{D}_l$ is $m$-disjoint and $\mathcal{D}_{l}\cup\mathcal{C}_{l}$ covers $\mathbb{Z}$.
Moreover, $\bigcup_{l=1}^{2^m}\mathcal{D}_l$ is $m$-disjoint.

Let
\begin{multline*}
\mathcal{U}_{0}^{n}=\{(\bigoplus_{i\leq2nk-1}\{0\}\times\prod_{i=2nk}^{2nk+3k-1}C_i\times W\times \{\widetilde{n}\},[2nk,2nk+2k-1]\cap\mathbb{Z})~|\\
\prod_{i=2nk}^{2nk+3k-1}C_i\times W\in\bigcup_{l=1}^{2^m}(\mathcal{C}_l^{3k}\times \mathcal{W}_l),\widetilde{n}\in \bigcup_{j=0}^{2k-1}\bigoplus_{i= k+m+2+j}^{\infty}(i\mathbb{Z}).\},
\end{multline*}
\begin{multline*}
\mathcal{U}_{2^{3k}(s-1)+t}^{n}=\{(\bigoplus_{i\leq2nk-1}\{0\}\times\prod_{i=2nk}^{2nk+s-1}V_i\times D_s\times \prod_{i=2nk+s+1}^{2nk+3k-1}V_i\times W\times \{\widetilde{n}\},[2nk,2nk+2k-1]\cap\mathbb{Z})~| \\V_{i+2nk-1}\in \mathcal{V}_{\rho(t)_i},i\in\{1,...,s-1,s+1,...,3k\},
(D_s,W)\in\bigcup_{l=1}^{2^m}(\mathcal{D}_l\times \mathcal{W}_l),\widetilde{n}\in \bigcup_{j=0}^{2k-1}\bigoplus_{i= k+m+2+j}^{\infty}(i\mathbb{Z}).\}
\end{multline*}
where $\rho$ is a bijection from $\{1,...,2^{3k}\}$ to $\{0,1\}^{3k}$, $s\in\{1,2,...,3k\}$ and $t\in\{1,2,3,...,2^{3k}\}$.

Note that each $\mathcal{C}_l$ is $k$-disjoint and the family
$\{\prod_{i\leq2nk-1}\{0\}\times\mathbb{Z}^{3k}\times W\times \prod_{i\geq 2nk+3k+m}\{n_i\}~|~W\in \bigcup_{l=1}^{2^m}\mathcal{W}_l\}$ is $k$-disjoint since $\bigcup_{l=1}^{2^m}\mathcal{W}_l$ is disjoint. Thus $\mathcal{U}_0^{n}$ is $k$-disjoint and uniformly bounded.

The families $\mathcal{V}_0,\mathcal{V}_1,\bigcup_{l=1}^{2^m}\mathcal{D}_l$ and $$\{\prod_{i\leq2nk-1}\{0\}\times\mathbb{Z}^{3k}\times W\times \prod_{i\geq 2nk+3k+m}\{n_i\}~|~W\in\mathcal{W}_l\}$$ are $m$-disjoint. Thus $\mathcal{U}_j^{n}$ is $m$-disjoint and uniformly bounded for $j=1,2,\cdots, (3k)2^{3k}$.

Let $\mathcal{U}_0\triangleq\bigcup_{n\in\mathbb{Z}}\mathcal{U}_0^{2n}$ and $\mathcal{U}_1\triangleq\bigcup_{n\in\mathbb{Z}}\mathcal{U}_0^{2n+1}$. Since $\{Z_{2n}|~n\in\mathbb{Z}\}$ and $\{Z_{2n+1}|~n\in\mathbb{Z}\}$ are $k$-disjoint, $\mathcal{U}_0$ and
 $\mathcal{U}_1$ are $k$-disjoint.
For $i=1,2,\cdots,(3k)2^{3k}$, let $\mathcal{U}_{2i}=\bigcup_{n\in\mathbb{Z}}\mathcal{U}_i^{2n}$ and $\mathcal{U}_{2i+1}=\bigcup_{n\in\mathbb{Z}}\mathcal{U}_i^{2n+1}$. Now we will show that
$\mathcal{U}_{i}$ is $m$-disjoint for $i=2,3,\cdots,(6k)2^{3k}+1.$

Indeed, for every $U,V\in\bigcup_{n\in\mathbb{Z}}\mathcal{U}_i^{2n}$ and $U\neq V$,
\begin{itemize}
\item if $U,V\in\mathcal{U}_i^{2n}$ for some $n\in\mathbb{Z}$, then $d(U,V)\geq m$.
\item if there exist $n_{1},n_{2}\in\mathbb{Z}$ and $n_{1}\neq n_{2}$ such that $U\in\mathcal{U}_i^{2n_{1}}$ and $V\in\mathcal{U}_i^{2n_{2}}$.

Assume that $n_{1}<n_{2}$,
for every $~x=((x_{i})_{i\in\mathbb{Z}},a)\in U\in\mathcal{U}_i^{2n_{1}}, y=((y_{i})_{i\in\mathbb{Z}},b)\in V\in\mathcal{U}_i^{2n_{2}}$,
there exists $s\in[4n_{1}k,4n_{1}k+3k-1]$, $x_{s}\in D_{s}\in\mathcal{D}_l$ for some $l\in\{1,2,\cdots, 2^{m}\}$.
Since $s\leq4n_{1}k+3k-1<4n_{2}k$,  $y_{s}=0$.
It is not difficult to see that for every $l\in\{1,...,2^m\}$, $[-m,m]\subseteq\bigcup\mathcal{C}_l$.
Therefore, $[-m,m]\cap D_{s}=\emptyset$.
It follows that $d(x,y)\geq |x_{s}-y_{s}|=|x_{s}|> m.$
So $d(U,V)\geq m$.
\end{itemize}
Therefore, for $i=1,2,\cdots,(3k)2^{3k}$, $\mathcal{U}_{2i}=\bigcup_{n\in\mathbb{Z}}\mathcal{U}_i^{2n}$ is $m$-disjoint.
Similarly, we can also prove that $\mathcal{U}_{2i+1}=\bigcup_{n\in\mathbb{Z}}\mathcal{U}_i^{2n+1}$ is $m$-disjoint for $i=1,2,\cdots,(3k)2^{3k}$.

We will prove that for every $n\in\mathbb{Z}$, $\bigcup_{i=0}^{(3k)2^{3k}}\mathcal{U}_i^n$ cover $Z_{n}$.

Indeed, let $x=((x_{i})_{i\in\mathbb{Z}},a)\in Z_{n}\setminus \bigcup\mathcal{U}_0^{n}$,
then  there exists $l\in \{1,...,2^m\}$ and $W\in \mathcal{W}_l$ such that $(x_i)_{i=2nk+3k}^{2nk+3k+m-1}\in W$. Since $x\notin \bigcup\mathcal{U}_0$, we have $(x_i)_{i=2nk}^{2nk+3k-1}\notin \bigcup\{\prod_{i=2nk}^{2nk+3k-1}C_i~|~C_i\in\mathcal{C}_l\}$. Then there exists $s\in\{1,2,...,3k\}$ such that $x_{s+2nk-1}\notin \bigcup\mathcal{C}_l$. Because $\bigcup(\mathcal{C}_l\bigcup\mathcal{D}_l)=\mathbb{Z}$, there exists $D_s\in \mathcal{D}_l$ such that $x_{s+2nk-1}\in D_s$. Since $\bigcup(\mathcal{V}_0\bigcup\mathcal{V}_1)=\mathbb{Z}$, we may take $t\in\{1,...,2^{3k}\}$ such that $(x_i)_{i=2nk}^{2nk+3k-1}\in \prod_{i=2nk}^{2nk+3k-1}V_i$, where $V_{i+2nk-1}\in\mathcal{V}_{\rho(t)_i},i\in\{1,2,...,3k\}$. So
$x\in\mathcal{U}_{2^{3k}(s-1)+t}^n.$

Since $\bigcup_{i=0}^{(3k)2^{3k}}\mathcal{U}_i^n$ covers $Z_n$ and $Y_n\subseteq Z_n$, $\bigcup_{n\in\mathbb{Z}}\bigcup_{i=0}^{(3k)2^{3k}}\mathcal{U}_i^n$ covers $X=\bigcup_{n\in\mathbb{Z}}Y_n$. Therefore, $\bigcup_{i=0}^{(6k)2^{3k}}\mathcal{U}_i$ covers $X$.
\end{proof}

\textbf{Question}: For $X=...\times 3\mathbb{Z}\times 2\mathbb{Z}\times \mathbb{Z}\times 2\mathbb{Z}\times 3\mathbb{Z}\times ...$, whether trasdim$(\text{sh}\bigcup X)=\omega$? Or whether $\text{sh}\bigcup X$ has asymptotic property C?\\

Finally, we will give a negative answer to the Question 7.1 raised in \cite{BellNag2017}.

\textbf{Question}\cite{BellNag2017}: Let $f: X\rightarrow Y$ be a uniformly expansive map between metric spaces. Assume that $Y$ has asymptotic property C
and $f^{-1}(A)$ has asymptotic property C for every bounded subset $A\subseteq Y$. Does $X$ have to have asymptotic property C?

\begin{exa}
For every $k\in\mathbb{Z}^{+}$, let
\[
X_k=\{(x_{i})_{i=1}^{\infty}\in\bigoplus_{i=1}^{\infty}\mathbb{Z}~|~x_{i}=0 ~~\text{for every~}i>k\}.
 \]
Let $X$ be a subspace of $((\bigoplus_{i=1}^{\infty}\mathbb{Z})\times\mathbb{Z},d_{1})$  defined by $$X=\{((a^k_i)_{i=1}^{\infty},k)\in(\bigoplus_{i=1}^{\infty}\mathbb{Z})\times\mathbb{Z}|~(a^k_i)_{i=1}^{\infty}\in X_{k},k\in\mathbb{Z}^{+}\}.$$
Define a map $f:X\to \mathbb{Z}$ by
$$f(((a^k_i)_{i=1}^{\infty},k))=k,~~\text{ for every }((a^k_i)_{i=1}^{\infty},k)\in X.$$
It is easy to see that $f$ is uniformly expansive and $\mathbb{Z}$ has asymptotic dimension 1. For any bounded subset $A\subseteq\mathbb{Z}$, there exists $n\in \mathbb{Z}$ such that $A\subset[1,n]\bigcap\mathbb{Z}$, so we have $f^{-1}(A)\subset  X_{n}\times[1,n]$, which has asymptotic dimension no more than $n$ and hence has asymptotic property C. Since $\mathbb{Z}^{m}\cong(X_{m},m)\subset X$ for every $m\in \mathbb{Z}$, $X$ does not have asymptotic property C.\\
\end{exa}
\end{section}

{\bf Acknowledgments.} The author wish to thank the reviewers for careful
reading and valuable comments. This work was supported by NSFC grant of P.R. China (No.11301224,11326104). And the authors are
grateful to Moscow State University where part of this paper has been written.


\providecommand{\bysame}{\leavevmode\hbox to3em{\hrulefill}\thinspace}
\providecommand{\MR}{\relax\ifhmode\unskip\space\fi MR }
\providecommand{\MRhref}[2]{%
  \href{http://www.ams.org/mathscinet-getitem?mr=#1}{#2}
}
\providecommand{\href}[2]{#2}

\end{document}